\newtheorem{thm}{Theorem}[section]
\newtheorem{lem}{Lemma}[section]
\newtheorem{dfn}{Definition}[section]
\newtheorem{cor}{Corollary}[section]
\DeclareMathOperator{\diam}{diam}
\DeclareMathOperator{\tr}{trace}
\DeclareMathOperator{\Spec}{Spec}
\title{Minimizers for the energy of eccentricity matrices of trees}
\author{Iswar Mahato \thanks{Department of Mathematics, Indian Institute of Technology Kharagpur, Kharagpur 721302, India. Email: iswarmahato02@gmail.com}\  \and M. Rajesh Kannan\thanks{Department of Mathematics, Indian Institute of Technology Hyderabad, Hyderabad 502285, India. Email: rajeshkannan1.m@gmail.com, rajeshkannan@math.iith.ac.in }}
\date{\today}
\begin{document}
\maketitle

\begin{abstract}
	The eccentricity matrix of a connected graph $G$, denoted by $\mathcal{E}(G)$, is obtained from the distance matrix of $G$ by keeping the largest nonzero entries in each row and each column and leaving zeros in the remaining ones. The $\mathcal{E}$-eigenvalues of $G$ are the eigenvalues of $\mathcal{E}(G)$. The eccentricity energy (or the $\mathcal{E}$-energy) of $G$ is the sum of the absolute values of all $\mathcal{E}$-eigenvalues of $G$. In this article, we determine the unique tree with the minimum second largest $\mathcal{E}$-eigenvalue among all trees on $n$ vertices other than the star. Also, we characterize the trees with minimum $\mathcal{E}$-energy among all trees on $n$ vertices. 
\end{abstract}

{\bf AMS Subject Classification (2010):} 05C50, 05C05.

\textbf{Keywords.} Eccentricity matrix, Tree, Second largest $\mathcal{E}$-eigenvalue, $\mathcal{E}$-energy, Equitable partition, Quotient matrix.

\section{Introduction}\label{sec1}
Throughout this paper, we consider finite, simple and connected graphs. Let $G$ be a graph with vertex set $V(G)$ and edge set $E(G)$. The number of vertices in $G$ is the order of $G$. The \textit{adjacency matrix} of a graph $G$ on $n$ vertices, denoted by $A(G)$, is the $n \times n$ matrix whose rows and columns are indexed by the vertex set of $G$ and the entries are defined as
$$A(G)_{uv}=
\begin{cases}
	\text{$1$} & \quad\text{if $u$ and $v$ are adjacent,}\\
	\text{0} & \quad\text{otherwise.}
\end{cases}$$
The \textit{distance} $d_G(u,v)$ between the vertices $u,v\in V(G)$ is the length of a minimum path between them in $G$, and define $d(u,u) =0$  for all $u \in V(G)$. The \textit{distance matrix} $D(G)$ of $G$, is the $n \times n$ matrix with its rows and columns are indexed by the vertices of $G$ and the $(u,v)$-th entry is equal to $d_G(u,v)$. The \textit{eccentricity} $e_G(u)$ of a vertex $u\in V(G)$ is given by $e_G(u)=\max\{d_G(u,v):v\in V(G)\}$. The maximum eccentricity of all vertices of $G$ is the \textit{diameter} of $G$, which is denoted by $\diam(G)$. A \textit{diametrical path} in $G$ is a path whose length is equal to the diameter of $G$.

The \textit{ eccentricity matrix} $\mathcal{E}(G)$ of a graph $G$ of order $n$ is the $n\times n$ matrix indexed by the vertices of $G$ and the entries are defined as
$$\mathcal{E}(G)_{uv}=
\begin{cases}
	\text{$d_G(u,v)$} & \quad\text{if $d_G(u,v)=\min\{e_G(u),e_G(v)\}$,}\\
	\text{0} & \quad\text{otherwise.}
\end{cases}$$
The eccentricity matrix $\mathcal{E}(G)$ of a graph $G$ is a real symmetric matrix, and hence all of its eigenvalues are real. The eigenvalues of $\mathcal{E}(G)$ are the $\mathcal{E}$-eigenvalues of $G$, in which the largest one is the $\mathcal{E}$-spectral radius of $G$. 
Let $\xi_1>\xi_2>\hdots >\xi_k$ be all the distinct $\mathcal{E}$-eigenvalues of $G$. Then, the $\mathcal{E}$-spectrum of $G$ is defined as
\[ \Spec_{\varepsilon}(G)=
\left\{ {\begin{array}{cccc}
		\xi_1 & \xi_2  &\hdots & \xi_k\\
		m_1& m_2& \hdots &m_k\\
\end{array} } \right\},
\]
where $m_i$ is the  multiplicity of $\xi_i$ for $i=1,2,\hdots,k$. 

Although the eccentricity matrix is obtained from the distance matrix, some of the properties of the eccentricity matrix are substantially different from those of the distance matrix. For example, the distance matrix of a connected graph is always irreducible, whereas the eccentricity matrix of a connected graph need not be irreducible \cite{ecc-main}. The eccentricity matrices of trees need not be invertible but the distance matrices of trees are always invertible \cite{mahato2021spectral}. As the eccentricity matrices have different nature compared to the distance matrices, so it is interesting to study their spectral properties based on the combinatorial structure of the graphs. In \cite{ecc-main}, Wang et al. proved that the eccentricity matrices of trees are irreducible and investigated the relations between the $\mathcal{E}$-eigenvalues and the $A$-eigenvalues of graphs. Wang et al. \cite{wang2020spectral} obtained some bounds for the $\mathcal{E}$-spectral radius of graphs and determined the corresponding extremal graphs; Mahato et al. \cite{mahato2020spectra} investigated the spectra and the inertia of eccentricity matrices of various classes of graphs; Wei et al. \cite{wei2020solutions} studied the extremal problems for $\mathcal{E}$-spectral radius of trees and determined the trees on $n$ vertices with minimum $\mathcal{E}$-spectral radius; Lei et al. \cite{lei2021eigenvalues} characterized the graphs whose second least $\mathcal{E}$-eigenvalue is greater than $-\sqrt{15-\sqrt{193}}$ and proved that all these graphs are determined by their $\mathcal{E}$-spectrum. Recently, He and Lu \cite{he2022largest} Characterized the trees with maximum $\mathcal{E}$-spectral radius among all trees on $n$ vertices with fixed odd diameter. Wei et al. \cite{wei2022characterizing} gave an ordering of the trees with a given diameter regarding their $\mathcal{E}$-spectral radii and characterized the trees with second minimum $\mathcal{E}$-spectral radius. Very recently, Mahato and Kannan \cite{mahato2022inertia} determined the inertia of eccentricity matrices of trees and characterized the trees for which the $\mathcal{E}$-eigenvalues are symmetric about the origin. For more advances on the eccentricity matrices of graphs, we refer to \cite{andjelic2022extended,lei2022spectral,mahato2022eccentricity,patel2021energy,qiu2022eccentricity,wang2022spectraldetermination,wei2022eccentricity}. 

One of the main applications of eccentricity matrix is in chemical graph theory. Randi\'c, Orel, and Balaban \cite{ran2} indicated that the eccentricity matrix appears very sensitive to the branching pattern of molecular graphs studied in theoretical chemistry. In \cite{ecc-main}, the authors gave some applications of this novel matrix in terms of molecular descriptors. Recently, Wang et al. \cite{wang2020boiling} described some applications of eccentricity matrix to the boiling point of hydrocarbons. For more details about the applications of eccentricity matrices of graphs, we refer to \cite{ran1,ran2,wang2020boiling,ecc-main}. 

The second largest eigenvalues of the adjacency, Laplacian, and distance matrix of a graph are important graph invariants in spectral graph theory and are well studied in the literature. In this article, we consider the second largest eigenvalue of eccentricity matrices of trees and characterize the trees with the minimum second largest $\mathcal{E}$-eigenvalue.

In \cite{wang2019graph}, Wang et al. introduced the  \emph{eccentricity energy (or the $\mathcal{E}$-energy)} of a graph $G$, which is defined  as 
$$E_{\mathcal{E}}(G)=\sum_{i=1}^n |\xi_i(G)|,$$ 
where $\xi_1(G)\geq \xi_2(G)\geq \hdots \geq \xi_n(G)$ are the  $\mathcal{E}$--eigenvalues of $G$. Since $\sum_{i=1}^n \xi_i(G)=\tr(\mathcal{E}(G))=0$, the $\mathcal{E}$-energy of $G$ is $E_{\mathcal{E}}(G)=2\sum_{\xi_i>0}\xi_i(G)$. 

In \cite{wang2019graph}, Wang et al. studied the $\mathcal{E}$-energy of graphs and determined the $\mathcal{E}$-energies of paths, cycles, and double stars. Also, they obtained some bounds for the $\mathcal{E}$-energy of graphs and determined the corresponding extremal graphs. Mahato et al. \cite{mahato2021spectral} constructed a pair of graphs with the same $\mathcal{E}$-energy but different $\mathcal{E}$-spectrum for every $n\geq 5$. In \cite{patel2021energy}, the authors studied the $\mathcal{E}$-energy of the coalescence of complete graphs and the coalescence of two cycles. Lei et al. \cite{lei2021eigenvalues} obtained an upper bound for the $\mathcal{E}$-energy of graphs and characterized the extremal graphs. Recently, Mahato and Kannan \cite{mahato2022eccentricity} studied the $\mathcal{E}$-energy change of complete multipartite graphs due to an edge deletion and proved that the $\mathcal{E}$-energy of a complete multipartite graph always increases due to an edge deletion. Motivated by these works, in this article, we study the $\mathcal{E}$-energy of trees and characterize the trees with minimum $\mathcal{E}$-energy among all trees on $n$ vertices.

This article is organized as follows: In section $2$, we introduce some notations and collect some preliminary results. In section $3$, we identify the tree with the minimum second largest $\mathcal{E}$-eigenvalue among all trees on $n$ vertices other than the star. In section $4$, we characterize the trees with minimum $\mathcal{E}$-energy among all trees on $n$ vertices.

\section{Preliminaries}
For an $n\times n$ matrix $A$ with real entries, let $A^T$, $\det (A)$, $\tr(A)$ denote the transpose, the determinant and the trace of $A$, respectively. The \textit{spectrum} of an $n\times n$ matrix $A$ is the multi-set of all eigenvalues of $A$. The \textit{inertia} of a real symmetric matrix $A$ is the triple $\big(n_{+}(A),n_{-}(A),n_{0}(A)\big)$, where $n_{+}(A),$ $n_{-}(A)$ and $n_{0}(A)$ denote the number of positive, negative and zero eigenvalues of $A$, respectively. For a real number $x$, $\lfloor x \rfloor$ denotes the greatest integer less than or equal to $x$, and $\lceil x \rceil$ denotes the least integer greater than or equal to $x$. In the following theorem, we state the well-known Interlacing Theorem.

\begin{lem}[{\cite{hor-john-mat}, Interlacing Theorem}]
	Let $M$ be a real symmetric matrix of order $m$, and let $N$ be a principal submatrix of order $n<m$. If $\lambda_1\geq \lambda_2\geq \hdots \geq \lambda_m$ are the eigenvalues of $M$ and $\mu_1\geq \mu_2\geq \hdots \geq \mu_n$ are the eigenvalues of $N$, then $\lambda_{m-n+i} \leq \mu_i \leq \lambda_i$ for $1\leq i \leq n$.     
\end{lem}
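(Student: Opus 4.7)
My plan is to prove both inequalities via the Courant--Fischer min-max characterization of eigenvalues of real symmetric matrices. Concretely, for an $m \times m$ real symmetric matrix $M$ with eigenvalues $\lambda_1 \geq \cdots \geq \lambda_m$, the $i$-th eigenvalue has two dual variational expressions:
\[
\lambda_i \;=\; \max_{\dim S = i}\, \min_{0 \neq x \in S} \frac{x^T M x}{x^T x} \;=\; \min_{\dim S = m-i+1}\, \max_{0 \neq x \in S} \frac{x^T M x}{x^T x}.
\]
I intend to use the first (max-min) form to obtain the upper bound $\mu_i \leq \lambda_i$ and the second (min-max) form to obtain the lower bound $\lambda_{m-n+i} \leq \mu_i$.

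The step that ties the two matrices together is the observation that the principal submatrix $N$ can be written as $N = P^T M P$, where $P$ is the $m \times n$ matrix whose columns are the standard basis vectors indexing the retained rows and columns. This $P$ is an isometric embedding of $\mathbb{R}^n$ into $\mathbb{R}^m$: for every $y \in \mathbb{R}^n$ one has $\|Py\| = \|y\|$ and $y^T N y = (Py)^T M (Py)$. Consequently, $PT$ is a $k$-dimensional subspace of $\mathbb{R}^m$ whenever $T$ is a $k$-dimensional subspace of $\mathbb{R}^n$, and the Rayleigh quotient of $N$ on $T$ equals that of $M$ on $PT$.

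Given these ingredients, both inequalities drop out by the same argument: the max-min (resp.\ min-max) formula for $\mu_i$ amounts to optimizing the Rayleigh quotient of $M$ over subspaces of the restricted form $PT$, while the corresponding formula for the relevant eigenvalue of $M$ optimizes over all subspaces of the right dimension in $\mathbb{R}^m$. Since a maximum taken over a subfamily is at most the maximum over the full family, one gets $\mu_i \leq \lambda_i$; since a minimum taken over a subfamily is at least the minimum over the full family, one gets $\mu_i \geq \lambda_{m-n+i}$. The only arithmetic to check is the matching of dimensions for the min-max form: $m - (m-n+i) + 1 = n - i + 1$, which is exactly the dimension appearing in the min-max formula for $\mu_i$.

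There is no substantive obstacle here; the entire argument is a standard exercise in variational characterizations, which is why the paper simply cites \cite{hor-john-mat}. The only care needed is bookkeeping of dimensions and being consistent about which direction each inequality flips when the feasible family of subspaces shrinks.
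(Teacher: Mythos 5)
Your argument is correct: the Courant--Fischer max-min and min-max characterizations, combined with the isometric embedding $N = P^T M P$ and the observation that restricting the family of admissible subspaces can only decrease a maximum and increase a minimum, yield both inequalities, and your dimension count $m-(m-n+i)+1 = n-i+1$ is right. The paper gives no proof of this lemma --- it is quoted verbatim from the cited reference --- and your proof is precisely the standard variational argument found there, so there is nothing to compare beyond noting that you have correctly reconstructed it.
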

Now, we recall the definition of quotient matrix and equitable partition.
\begin{dfn}[{\cite{brou-haem-book}, Equitable partition}] Let $A$ be a real symmetric matrix whose rows and columns are indexed by the set  $X=\{1,2,\hdots,n\}$. Let $\pi=\{X_1,X_2,\hdots,X_m\}$ be a partition of $X$. The \emph{characteristic matrix} $C$ is the $n\times m$ matrix whose $j$-th column is the characteristic vector of $X_j$ $(j=1,2,\hdots,m)$. Let $A$ be partitioned according to $\pi$ as follows \[A=\left[ {\begin{array}{cccc}
			A_{11} & A_{12} &\hdots & A_{1m}\\
			A_{21} & A_{22} &\hdots & A_{2m}\\
			\vdots &\hdots & \ddots & \vdots\\
			A_{m1} & A_{m2}& \hdots &A_{mm}\\
	\end{array} } \right],\]
	where $A_{ij}$ denotes the submatrix (block) of $A$ formed by rows in $X_i$ and the columns in $X_j$. If $q_{ij}$ denotes the average row sum of $A_{ij}$, then the matrix $Q=(q_{i,j})$ is  the \emph{quotient matrix} of $A$. If the row sum of each block $A_{ij}$ is constant, then the partition $\pi$ is the \emph{equitable partition}.
\end{dfn}

The following theorem is known for the spectrum of a quotient matrix corresponding to an equitable partition.

\begin{thm}[\cite{brou-haem-book}]\label{quo-spec}
	Let $Q$ be the quotient matrix of any square matrix $A$ corresponding to an equitable partition. Then the spectrum of $A$ contains the spectrum of $Q$.
\end{thm}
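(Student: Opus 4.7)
The plan is to verify the spectral inclusion by exhibiting an explicit lifting of $Q$-eigenvectors to $A$-eigenvectors through the characteristic matrix $C$. The central observation is the matrix identity $AC = CQ$, which encodes exactly the equitable partition condition, and from which the result follows by straightforward linear algebra.

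First I would establish the identity $AC = CQ$. Fix indices $i \in \{1,\dots,n\}$ and $j \in \{1,\dots,m\}$. The $(i,j)$-entry of $AC$ equals $\sum_{k \in X_j} A_{ik}$, which is the row sum of the block row of $A$ indexed by $i$ restricted to the columns in $X_j$. If $i \in X_r$, then by the equitable partition hypothesis this row sum is constant over all $i \in X_r$ and equals $q_{rj}$. On the other hand, the $(i,j)$-entry of $CQ$ is $q_{rj}$ as well, since the $i$-th row of $C$ has a single $1$ in position $r$. Therefore $AC = CQ$.

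Next I would observe that the columns of $C$ are indicator vectors of the disjoint nonempty cells $X_1,\dots,X_m$, hence linearly independent, so $C$ has full column rank $m$. Now let $\lambda$ be any eigenvalue of $Q$ with eigenvector $v \neq 0$, so $Qv = \lambda v$. Multiplying $AC = CQ$ on the right by $v$ yields
\begin{equation*}
A(Cv) = C(Qv) = \lambda (Cv),
\end{equation*}
and since $C$ has full column rank we have $Cv \neq 0$, so $Cv$ is an eigenvector of $A$ for the eigenvalue $\lambda$. Hence $\Spec(Q) \subseteq \Spec(A)$ as sets.

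Finally, for the multiplicity part of ``contains,'' I would note that if $v_1,\dots,v_k$ are linearly independent eigenvectors of $Q$ for the eigenvalue $\lambda$, then $Cv_1,\dots,Cv_k$ are linearly independent in $\mathbb{R}^n$ (again because $C$ has full column rank), giving at least $k$ linearly independent $A$-eigenvectors for $\lambda$. Therefore the spectrum of $A$ contains the spectrum of $Q$ counted with multiplicity. There is no real obstacle here: the only step requiring any care is making the identity $AC = CQ$ precise entry-by-entry, and once that is done the rest is immediate from the injectivity of multiplication by $C$.
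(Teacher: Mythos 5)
The paper does not prove this statement; it is imported verbatim from Brouwer--Haemers (\cite{brou-haem-book}) as a known result, so there is no in-paper argument to compare against. Your proof is the standard one and is correct: the identity $AC=CQ$ is exactly the equitable-partition condition, and injectivity of $C$ lifts eigenvectors of $Q$ to eigenvectors of $A$, giving $\Spec(Q)\subseteq\Spec(A)$ as sets. One small caveat: since the paper defines the spectrum as a \emph{multiset}, the containment should respect algebraic multiplicity, and your final paragraph only controls \emph{geometric} multiplicity; for a general square matrix $A$ this is not automatically the same. The clean fix is to extend $C$ to an invertible matrix $S=[\,C\mid N\,]$ and note that $AC=CQ$ forces $S^{-1}AS=\left[\begin{smallmatrix} Q & * \\ 0 & * \end{smallmatrix}\right]$, so the characteristic polynomial of $Q$ divides that of $A$. (In this paper $A=\mathcal{E}(G)$ is symmetric, so $Q$ is diagonalizable and your version already suffices for every application made of the theorem.)
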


Let $K_n$, $P_n$ and $K_{1,n-1}$ denote the complete graph, the path and the star on $n$ vertices, respectively. Let $\mathcal{T}_{n,d}$ denote the set of all trees on $n$ vertices with diameter $d$. It is easy to see that the tree with diameter $1$ is $K_2$, and the tree with diameter $2$ is a star with at least $3$ vertices. For odd $d\geq 3$, let $T_{n,d}^{a,b}$ be the tree obtained from $P_{d+1}=v_0v_1v_2\hdots v_d$ by attaching $a$ pendant vertices to $v_{\frac{d-1}{2}}$ and $b$ pendent vertices to $v_{\frac{d+1}{2}}$, where $a+b=n-d-1$ and $b\geq a\geq 0$. If $d\geq 4$ is even, let $T_{n,d}^{a,b,c}$ be the tree obtained from $P_{d+1}=v_0v_1v_2\hdots v_d$ by attaching $a,b,c$ pendant vertices to $v_{\frac{d}{2}-1},v_{\frac{d}{2}}, v_{\frac{d}{2}+1}$, respectively, where $a+b+c=n-d-1$ and $c\geq a\geq 0, b\geq 0$. Now, we collect some useful results known for eccentricity matrices of graphs. 

\begin{lem}[{\cite[Theorem 2.1]{mahato2020spectra}}]\label{star-spec}
	Let $K_{1,n-1}$ be the star on $n$ vertices. Then the $\mathcal{E}$-spectrum of $K_{1,n-1}$ is given by
	\[\Spec_{\varepsilon}(K_{1,n-1})=
	\left\{ {\begin{array}{ccc}
			n-2+\sqrt{n^2-3n+3}  & n-2-\sqrt{n^2-3n+3} & -2  \\
			1 & 1 & n-2 \\
	\end{array} } \right\}.\]
\end{lem}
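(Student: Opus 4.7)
The plan is to read off the entries of $\mathcal{E}(K_{1,n-1})$ directly from the definition and then diagonalize via an equitable partition.

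First, I would label the central vertex $v_0$ and the pendants $v_1,\dots,v_{n-1}$ and compute eccentricities: $e(v_0)=1$ and $e(v_i)=2$ for $1\le i\le n-1$. Then the entry $\mathcal{E}_{v_0v_i}$ equals $d(v_0,v_i)=1$ because $\min\{e(v_0),e(v_i)\}=1$, while $\mathcal{E}_{v_iv_j}$ for distinct pendants equals $d(v_i,v_j)=2$ because $\min\{e(v_i),e(v_j)\}=2$. Hence, with the vertex ordering $v_0,v_1,\dots,v_{n-1}$,
\[
\mathcal{E}(K_{1,n-1})=\begin{pmatrix} 0 & \mathbf{1}^{T} \\ \mathbf{1} & 2(J-I)\end{pmatrix},
\]
where $J$ and $I$ are the all-ones and identity matrices of size $n-1$.

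Next I would exhibit the equitable partition $\pi=\{\{v_0\},\{v_1,\dots,v_{n-1}\}\}$, whose blocks have constant row sums $0,\,n-1,\,1,\,2(n-2)$. The quotient matrix is
\[
Q=\begin{pmatrix} 0 & n-1 \\ 1 & 2(n-2)\end{pmatrix},
\]
with characteristic polynomial $\lambda^{2}-2(n-2)\lambda-(n-1)=0$. Solving yields the two eigenvalues $n-2\pm\sqrt{n^{2}-3n+3}$, which by Theorem~\ref{quo-spec} lie in $\Spec_{\varepsilon}(K_{1,n-1})$.

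To account for the remaining $n-2$ eigenvalues, I would consider the $(n-2)$-dimensional subspace $W=\{x\in\mathbb{R}^{n}:x_{0}=0,\ \sum_{i=1}^{n-1}x_{i}=0\}$, which is orthogonal to the span of the characteristic vectors of $\pi$. For $x\in W$ a direct computation gives $(\mathcal{E}(K_{1,n-1})x)_{0}=\sum_{i\ge 1}x_{i}=0$ and $(\mathcal{E}(K_{1,n-1})x)_{j}=2\sum_{i\ne j}x_{i}=-2x_{j}$ for $j\ge 1$, so every vector in $W$ is an eigenvector with eigenvalue $-2$. Combining the three contributions gives all $n$ eigenvalues with the claimed multiplicities, and a trace check ($2(n-2)-2(n-2)=0$) confirms the result. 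There is no serious obstacle here; the only point that requires care is verifying that the partition is equitable so that Theorem~\ref{quo-spec} applies, and that the orthogonal complement $W$ indeed supplies exactly the missing multiplicity $n-2$.
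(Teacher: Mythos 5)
Your proof is correct and complete. The paper does not prove this lemma at all --- it imports it verbatim from the cited reference --- so there is no in-paper argument to compare against; your derivation (identify $\mathcal{E}(K_{1,n-1})$ as the bordered matrix $\bigl(\begin{smallmatrix}0 & \mathbf{1}^T\\ \mathbf{1} & 2(J-I)\end{smallmatrix}\bigr)$, extract two eigenvalues from the $2\times 2$ quotient of the obvious equitable partition, and exhibit the $(n-2)$-dimensional eigenspace for $-2$ on the orthogonal complement of the characteristic vectors) is the standard and fully rigorous way to establish it. The only point worth flagging is that your computation of the eccentricities ($e(v_i)=2$ for pendants) tacitly assumes $n\geq 3$, i.e.\ that there are at least two pendant vertices; for $n=2$ the stated spectrum still holds but degenerates to $\{\pm 1\}$ by direct inspection of $K_2$.
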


\begin{lem}[{\cite[Lemma 2.7]{wang2019graph}}]\label{dstar-spec}
	For $a+b=n-4$, $b\geq a \geq 0$, the $\mathcal{E}$-spectrum of $T_{n,3}^{a,b}$ is given by 
	\[\Spec_{\varepsilon}(T_{n,3}^{a,b})=
	\left\{ {\begin{array}{ccccc}
			\sqrt{\frac{\alpha+\sqrt{\beta}}{2}} & \sqrt{\frac{\alpha-\sqrt{\beta}}{2}} & 0 & -\sqrt{\frac{\alpha-\sqrt{\beta}}{2}} & -\sqrt{\frac{\alpha+\sqrt{\beta}}{2}} \\
			1 & 1 & n-4  & 1 & 1\\
	\end{array} } \right\},\]
	where $\alpha=9ab+13a+13b+17$ and $\beta=(9ab+13a+13b+17)^2-64(a+1)(b+1)$.		
\end{lem}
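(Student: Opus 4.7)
The plan is to exploit the vertex symmetries of $T_{n,3}^{a,b}$ to obtain an equitable partition, reduce the spectral computation to a $4 \times 4$ quotient matrix via Theorem~\ref{quo-spec}, and then account for the remaining $n-4$ eigenvalues as zeros coming from twin pendants.

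First I would read off the eccentricities: the two central vertices $v_1, v_2$ have eccentricity $2$, while the $n-2$ leaves (namely $v_0$, $v_3$, and the $a+b$ attached pendants) all have eccentricity $3$. From the definition of $\mathcal{E}(G)$, a pair $\{u,v\}$ contributes a nonzero entry only when $d(u,v) = \min\{e(u), e(v)\}$, so the nonzero entries of $\mathcal{E}(T_{n,3}^{a,b})$ are exactly: an entry $2$ between $v_1$ and each leaf on the $v_2$-side (symmetrically for $v_2$), together with an entry $3$ between every leaf on the $v_1$-side and every leaf on the $v_2$-side.

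Next let $V_1 = \{v_1\}$, $V_2 = \{v_2\}$, let $V_3$ be the $a+1$ leaves adjacent to $v_1$ (including $v_0$), and $V_4$ be the $b+1$ leaves adjacent to $v_2$ (including $v_3$). Each block of $\mathcal{E}(T_{n,3}^{a,b})$ relative to $\pi = \{V_1,V_2,V_3,V_4\}$ is constant (taking value $0$, $2$, or $3$), so $\pi$ is equitable and its quotient matrix is
\[
Q \;=\; \begin{pmatrix} 0 & 0 & 0 & 2(b+1) \\ 0 & 0 & 2(a+1) & 0 \\ 0 & 2 & 0 & 3(b+1) \\ 2 & 0 & 3(a+1) & 0 \end{pmatrix}.
\]
A cofactor expansion of $\det(\lambda I - Q)$ along the first column yields a biquadratic characteristic polynomial $\lambda^4 - \alpha \lambda^2 + 16(a+1)(b+1)$ with $\alpha = 9ab+13a+13b+17$, whose four roots are exactly $\pm \sqrt{(\alpha \pm \sqrt{\beta})/2}$ with $\beta = \alpha^2 - 64(a+1)(b+1)$. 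By Theorem~\ref{quo-spec}, these four numbers lie in $\Spec_{\varepsilon}(T_{n,3}^{a,b})$.

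To finish I would observe that the vertices inside each of $V_3$ and $V_4$ are \emph{twins}: they have identical rows in $\mathcal{E}(T_{n,3}^{a,b})$. Hence any vector supported on $V_3$ (respectively $V_4$) whose entries sum to zero lies in the kernel of $\mathcal{E}(T_{n,3}^{a,b})$, producing $a$ and $b$ linearly independent null vectors respectively. These $n-4$ null vectors are orthogonal to the lifts of the four $Q$-eigenvectors, so together they exhaust $\mathbb{R}^n$ and the multiplicity of the zero eigenvalue is exactly $n-4$. The main hurdle is essentially bookkeeping: keeping the $(a+1)$ and $(b+1)$ factors straight in the $4\times 4$ determinant so that the biquadratic structure emerges cleanly, and confirming that the four roots of $Q$ are distinct from $0$ (which follows because $16(a+1)(b+1)>0$), so that the multiplicity count is consistent with the stated spectrum.
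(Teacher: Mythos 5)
Your proof is correct. Note that the paper does not actually prove this lemma---it is imported verbatim from \cite[Lemma 2.7]{wang2019graph}---so there is no in-paper argument to compare against; but your method is precisely the technique the authors themselves use for the analogous spectra of $T_{n,5}^{a,b}$, $T_{n,6}^{a,b,c}$ and $T_{n,7}^{a,b}$ in Section 4: identify the nonzero pattern of $\mathcal{E}$, pass to an equitable partition, and read the nonzero eigenvalues off the quotient matrix via Theorem~\ref{quo-spec}. Your quotient matrix and the resulting biquadratic $\lambda^4-\alpha\lambda^2+16(a+1)(b+1)$ check out, with $\alpha=9(a+1)(b+1)+4(a+1)+4(b+1)=9ab+13a+13b+17$. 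The one place you genuinely diverge is in accounting for the remaining $n-4$ eigenvalues: the paper's parallel arguments invoke the inertia result (Theorem~\ref{Inertia-odd}) to conclude there are exactly two positive and two negative eigenvalues, whereas you exhibit the $a+b=n-4$ kernel vectors directly from the twin pendant vertices. Your route is more elementary and self-contained (it does not lean on \cite{mahato2022inertia}), at the cost of the small bookkeeping about orthogonality to the lifted eigenvectors, which you handle correctly. The only loose end worth one line: to justify that each of the four nonzero eigenvalues has multiplicity exactly one as displayed, you should also observe that $\beta>0$, i.e.\ $\alpha^2>64(a+1)(b+1)$; this is immediate since $\alpha\geq 9(a+1)(b+1)+8\sqrt{(a+1)(b+1)}>8\sqrt{(a+1)(b+1)}$.
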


\begin{thm}[{\cite[Theorem 3.1]{mahato2022inertia}}]\label{Inertia-odd}
	Let $T$ be a tree on $n\geq 4$ vertices with $\diam(T)=2d+1$, $d\in \mathbb{N}$. Then $\mathcal{E}(T)$ has exactly two positive and two negative eigenvalues, that is, the inertia of $\mathcal{E}(T)$ is $(2,2,n-4)$.
\end{thm}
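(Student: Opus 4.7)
The plan is to exploit the structural constraint imposed by odd diameter. When $\diam(T) = 2d+1$, the tree has a unique central edge $v_d v_{d+1}$ on any diametrical path $v_0 v_1 \cdots v_{2d+1}$, and removing it splits $T$ into subtrees $T_1 \ni v_d$ and $T_2 \ni v_{d+1}$. Set $h(x) = d_T(x,v_d)$ for $x \in T_1$ and $h(x) = d_T(x,v_{d+1})$ for $x \in T_2$; a length argument against a deepest vertex of the opposite side yields $h(x) \leq d$ for every vertex. I would then establish that $e_T(x) = h(x) + 1 + d$ for every $x$, attained \emph{only} by vertices of the opposite subtree at depth $d$: for $x \in T_1$ and $y \in T_2$ we have $d_T(x,y) = h(x) + 1 + h(y) \leq h(x) + 1 + d$ with equality iff $h(y) = d$, while for $x, x' \in T_1$, $d_T(x,x') \leq h(x) + h(x') \leq h(x) + d < e_T(x)$.

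Consequently no pair of vertices on the same side satisfies $d_T(x,x') = \min\{e_T(x), e_T(x')\}$, so after ordering vertices by parts we obtain the bipartite block form
\[
\mathcal{E}(T) = \begin{pmatrix} 0 & B \\ B^T & 0 \end{pmatrix}.
\]
Writing $D_i = \{x \in T_i : h(x) = d\}$, a direct check of the defining condition shows that for $x \in T_1$, $y \in T_2$ the entry $B_{xy}$ is nonzero iff $x \in D_1$ or $y \in D_2$, taking the values $2d+1$, $d+1+h(y)$, $h(x)+1+d$, or $0$ in the four natural sub-blocks determined by whether $x \in D_1$ and $y \in D_2$.

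The crux is to show $\rank(B) = 2$. I would examine $B$ row by row: every row indexed by $x \in D_1$ equals the same vector $r_1$ (its entries depend only on $y$), while every row indexed by $x \in T_1 \setminus D_1$ is a scalar multiple of $r_2 := (\mathbf{1}_{|D_2|},\, \mathbf{0}_{|T_2 \setminus D_2|})$. Both $D_i$ and $T_i \setminus D_i$ are nonempty: $v_0 \in D_1$ and $v_{2d+1} \in D_2$ since they sit at depth $d$, while $v_d \in T_1 \setminus D_1$ and $v_{d+1} \in T_2 \setminus D_2$ because $d \geq 1$. The second fact ensures the column-block $T_2 \setminus D_2$ is nonempty, so $r_1$ and $r_2$ have complementary nonzero supports on that block; hence they are linearly independent and $\rank(B) = 2$. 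The main obstacle is really the bookkeeping needed for this case analysis and the nonemptiness checks; once they are in place, the rank conclusion is transparent.

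Finally, a symmetric matrix of the bipartite form $\begin{pmatrix} 0 & B \\ B^T & 0 \end{pmatrix}$ has nonzero eigenvalues $\pm \sigma_1, \pm \sigma_2$, where $\sigma_1, \sigma_2$ are the nonzero singular values of $B$. Thus $\mathcal{E}(T)$ has exactly two positive and two negative eigenvalues together with $0$ of multiplicity $n-4$, giving inertia $(2,2,n-4)$ as claimed.
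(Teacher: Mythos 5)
Your proof is correct. Note that the paper does not prove this statement at all---it is imported verbatim from the cited reference \cite{mahato2022inertia}---so there is no in-paper argument to compare against; your decomposition (splitting $T$ at the central edge, showing $e_T(x)=h(x)+d+1$ is attained only on the opposite side at depth $d$, deducing the block form $\bigl(\begin{smallmatrix}0 & B\\ B^T & 0\end{smallmatrix}\bigr)$, and computing $\rank(B)=2$) is a complete and valid standalone proof, and is essentially the natural route to this result. One cosmetic point: the phrase ``complementary nonzero supports'' is slightly off, since $r_1$ is nonzero on all of $T_2$ including $D_2$; what you actually use (and what suffices) is that $r_2$ vanishes on the nonempty set $T_2\setminus D_2$ while $r_1$ does not, and both vectors are nonzero, so they are not proportional.
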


\begin{thm}[{\cite[Theorem 3.2]{mahato2022inertia}}]\label{inertia-even}
	Let $T$ be a tree on $n$ vertices with $\diam(T)=2d$, $d\geq 2, d\in \mathbb{N}$, and let $u_0$ be the centre of $T$. Let $u_1,u_2,\hdots, u_l$ $(l\geq 2)$ be the neighbours of $u_0$ such that $e_{T_i}(u_i)=d-1$, where $T_i$ are the components of $T-\cup_{i=1}^l u_0u_i$ containing $u_i$, for $i=1,2,\hdots, l$. Then the inertia of $\mathcal{E}(T)$ is $(l,l,n-2l)$.
\end{thm}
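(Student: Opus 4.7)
My plan is to exploit the structure around the center $u_0$ to obtain a block decomposition of $\mathcal{E}(T)$, and then combine a direct kernel bound with the Interlacing Theorem applied to a carefully chosen $2l\times 2l$ principal submatrix. First I would compute eccentricities: since $e_{T_i}(u_i)=d-1$ for $i=1,\ldots,l$ and $e_{T_j}(u_j)\le d-2$ for $j>l$ (the latter forced by the definition of $l$ together with $\diam(T)=2d$), a short argument yields $e_T(u_0)=d$ and, for any $u\in T_i$ with $d_{T_i}(u,u_i)=k_u$, $e_T(u)=k_u+d+1$ (the maximum being realized across two peripheral branches, available because $l\ge 2$). Hence the \emph{peripheral} set $P:=\bigcup_{i=1}^l V_i$, where $V_i:=\{u\in T_i:d_{T_i}(u,u_i)=d-1\}$ and $p_i:=|V_i|$, is precisely the set of vertices of eccentricity $2d$.

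Writing $R:=V(T)\setminus P$ (so $u_0\in R$), a direct case analysis of the defining relation $d(u,v)=\min\{e(u),e(v)\}$ shows that ordering rows/columns as $R,P$ yields the block form
\[
\mathcal{E}(T)=\begin{pmatrix}0 & B\\ B^T & C\end{pmatrix},
\]
in which the $R\times R$ block vanishes (for $u,v\in R$ one checks $d(u,v)<\min\{e(u),e(v)\}$ in all three subcases: same branch, different non-peripheral branches, and the cases involving $u_0$), the $P\times P$ block is $C=2d\cdot A(K_{p_1,\ldots,p_l})$, and each row of $B$ is $(k_u+d+1)\chi_{P\setminus V_i}$ for $u\in R\cap T_i$ with $i\le l$, $(k_u+d+1)\chi_P$ for $u\in R\cap T_j$ with $j>l$, or $d\chi_P$ for $u=u_0$.

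Next I would prove $\rank\mathcal{E}(T)\le 2l$ by directly bounding the nullity. For $(v_R,v_P)\in\ker\mathcal{E}(T)$, the equation $Bv_P=0$ together with the identity $\chi_P=\frac{1}{l-1}\sum_i\chi_{P\setminus V_i}$ and the hypothesis $l\ge 2$ is equivalent to the $l$ linearly independent constraints $\sum_{v\in V_i}v_P(v)=0$; these also force $Cv_P=0$, so the second kernel equation reduces to $B^Tv_R=0$. The latter amounts to $l$ linear functionals $\phi_i(v_R):=\sum_{u\in R\setminus T_i}(k_u+d+1)v_R(u)=0$ (adopting the convention $k_{u_0}=-1$). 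The main technical step, and where I anticipate the bookkeeping to be delicate, is verifying that $\phi_1,\ldots,\phi_l$ are linearly independent: evaluating $\sum_i\alpha_i\phi_i=0$ on the standard basis vector at $u_j$ ($j=1,\ldots,l$) yields $(d+1)\sum_{i\ne j}\alpha_i=0$, and this system together with $l\ge 2$ forces every $\alpha_i=0$. Consequently $\dim\ker\mathcal{E}(T)\ge(|P|-l)+(|R|-l)=n-2l$.

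Finally I would apply the Interlacing Theorem to the $2l\times 2l$ principal submatrix $N$ indexed by $\{u_1,\ldots,u_l\}\cup\{v_1,\ldots,v_l\}$ with arbitrary $v_i\in V_i$. Reading off entries from the block description gives the clean Kronecker factorization
\[
N=\begin{pmatrix}0 & d+1\\ d+1 & 2d\end{pmatrix}\otimes(J_l-I_l).
\]
The $2\times 2$ factor has determinant $-(d+1)^2<0$, contributing one positive and one negative eigenvalue, while $J_l-I_l$ has eigenvalues $l-1$ (simple) and $-1$ (with multiplicity $l-1$); multiplying, $N$ has exactly $l$ positive and $l$ negative eigenvalues. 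The Interlacing Theorem then gives $n_+(\mathcal{E}(T))\ge l$ and $n_-(\mathcal{E}(T))\ge l$, which combined with $n_++n_-\le 2l$ from the kernel count forces $n_+=n_-=l$, yielding inertia $(l,l,n-2l)$.
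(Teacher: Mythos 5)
This statement is imported by the paper as a preliminary result (it is quoted from Theorem 3.2 of \cite{mahato2022inertia}) and is not proved anywhere in this document, so there is no internal proof to compare your argument against. Taken on its own terms, your proof is correct and self-contained. The eccentricity computations are right: $e_T(u_0)=d$ forces $e_{T_j}(u_j)\le d-1$ for every neighbour, hence $\le d-2$ for $j>l$, and $e_T(u)=k_u+d+1$ for $u\ne u_0$ because $l\ge 2$ guarantees a peripheral branch other than the one containing $u$; consequently $P$ is exactly the set of vertices of eccentricity $2d$ and the block form with vanishing $R\times R$ block, $C=2d\,A(K_{p_1,\dots,p_l})$, and the stated rows of $B$ all check out. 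The kernel count is also sound, with two small points worth making explicit: (i) for each $i\le l$ the constraint $\chi_{P\setminus V_i}^{T}v_P=0$ is actually imposed because $u_i\in R$ (here $k_{u_i}=0\le d-2$ uses $d\ge 2$), and together with $\chi_P=\frac{1}{l-1}\sum_i\chi_{P\setminus V_i}$ this gives exactly the $l$ disjoint-support conditions $\chi_{V_i}^{T}v_P=0$, which indeed annihilate $C$; (ii) your evaluation of $\sum_i\alpha_i\phi_i$ at the vertices $u_j$ gives $\sum_{i\ne j}\alpha_i=0$ for all $j$, whence all $\alpha_i$ equal their common sum $S$ and $(l-1)S=0$ forces $S=0$ — so $\rank\mathcal{E}(T)=2l$ exactly, not just $\le 2l$. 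The Kronecker factorization of the $2l\times 2l$ principal submatrix and the signature count $(l,l)$ for $\bigl(\begin{smallmatrix}0&d+1\\ d+1&2d\end{smallmatrix}\bigr)\otimes(J_l-I_l)$ are correct, and interlacing then pins down the inertia as $(l,l,n-2l)$. This is a legitimate independent verification of the cited theorem.
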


\begin{thm}[{\cite[Theorem 2.11]{wei2020solutions}}]\label{diam234-min}
	For $n\geq 4$, the tree $T_{n,3}^{0,n-4}$ is the unique tree with minimum $\mathcal{E}$-spectral radius among all trees in $\bigcup_{d=2}^4\mathcal{T}_{n,d}$. 
\end{thm}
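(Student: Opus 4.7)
Partition $\bigcup_{d=2}^{4}\mathcal{T}_{n,d}$ by diameter, identify a minimizer within each class, and compare the three champions. For diameter $3$, Lemma~\ref{dstar-spec} gives the spectrum in closed form and the minimum follows by monotonicity; for diameter $2$, Lemma~\ref{star-spec} is used directly; for diameter $4$, split further into $T_{n,4}^{a,b,c}$ (centre has exactly two height-$2$ branches) and the multi-branched case.

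\textbf{Diameters $3$ and $2$.} By Lemma~\ref{dstar-spec}, $\rho(T_{n,3}^{a,b})=\sqrt{(\alpha+\sqrt\beta)/2}$. With $a+b=n-4$ fixed and $p:=ab$, one has $\alpha=9p+(13n-35)$ and $\beta=\alpha^{2}-64(p+n-3)$, so $\partial_p\alpha=9$ and $\partial_p\beta=18\alpha-64>0$ whenever $\alpha\geq 17$, i.e.\ whenever $n\geq 4$. Hence $\alpha+\sqrt\beta$ is strictly increasing in $p$ and $\rho(T_{n,3}^{a,b})$ is uniquely minimized at $p=0$, namely at $T_{n,3}^{0,n-4}$. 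For the star, Lemma~\ref{star-spec} yields $\rho(K_{1,n-1})\geq 2(n-2)$, whereas $\rho(T_{n,3}^{0,n-4})^2\leq\alpha=13n-35$; these inequalities together give $\rho(T_{n,3}^{0,n-4})<\rho(K_{1,n-1})$ for $n\geq 5$, and $n=4$ reduces to $4<2+\sqrt 7$.

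\textbf{Diameter $4$.} Let $T$ have centre $v_2$, let $l\geq 2$ be the number of height-$2$ branches at $v_2$ with leaf-counts $s_1\leq\cdots\leq s_l$, let $p$ be the number of pendant neighbours of $v_2$, and put $s=\sum s_i$ so that $n=1+l+p+s$. When $l=2$, $T=T_{n,4}^{a,b,c}$; the six automorphism orbits ($L_1,L_2,B,\{v_1\},\{v_3\},\{v_2\}$) form an equitable partition, and Theorem~\ref{inertia-even} pins the inertia of $\mathcal{E}(T)$ at $(2,2,n-4)$, so the four nonzero $\mathcal{E}$-eigenvalues are the roots of an explicit quartic $\lambda^4-E\lambda^2-F\lambda+G=0$ whose coefficients $E,F,G$ are polynomials in $(a,b,c)$. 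Minimizing the largest root under $a+b+c=n-5$, $c\geq a\geq 0$, $b\geq 0$ identifies $T_{n,4}^{0,n-5,0}$ as the unique minimizer, with $\rho=2+\sqrt{18n-69}$; the required inequality $\rho(T_{n,4}^{0,n-5,0})^2>\rho(T_{n,3}^{0,n-4})^2$ reduces to $18n-65>13n-35$ for $n\geq 6$, with $n=5$ verified by hand. When $l\geq 3$, apply the Interlacing Theorem to the principal submatrix of $\mathcal{E}(T)$ on $\{v_2\}\cup\bigcup_{i=1}^{l}L_i$; its equitable-partition quotient is a symmetric $(l+1)\times(l+1)$ matrix with off-diagonal top-block entries $4\sqrt{s_is_j}$ and last column $2\sqrt{s_i}$, and the Rayleigh quotient against $(\sqrt{s_1},\ldots,\sqrt{s_l},0)$ gives $\rho(\mathcal{E}(T))\geq 4(s^{2}-\sum s_i^{2})/s$. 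A short case split on the shape of $(s_1,\ldots,s_l)$ (balanced vs.\ dominant branch) together with a $3\times 3$-submatrix estimate in the latter regime suffices to keep the bound above $\rho(T_{n,3}^{0,n-4})$.

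\textbf{Main obstacle.} The diameter-$4$ multi-branched case is the delicate part: the lower bound on $\rho(\mathcal{E}(T))$ must hold uniformly over the whole parameter family $(l;s_1,\ldots,s_l;p)$ constrained by $1+l+p+\sum s_i=n$. The single-row estimate $\rho(\mathcal{E}(T))^2\geq 16(s-s_1)+9(l+p-1)+4$, obtained from the $(v,v)$-entry of $\mathcal{E}(T)^2$ at a vertex $v$ of eccentricity $4$, is tight enough when $s-s_1$ is large but fails in the uniform regime $s_i\equiv 1$; no single scalar bound closes every regime, so a modest case split is unavoidable, and a small residual table of boundary cases in $(l,n)$ will very likely need to be verified by hand to finish the proof.
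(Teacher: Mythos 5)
First, a remark on the comparison itself: the paper does not prove this statement — it is imported verbatim from \cite{wei2020solutions} as a preliminary — so there is no in-paper argument to measure your route against. Judged on its own terms, your treatment of diameters $2$ and $3$ is correct and essentially complete: the monotonicity of $\alpha+\sqrt{\beta}$ in $p=ab$ is valid since $18\alpha-64>0$, and the comparison $\rho(T_{n,3}^{0,n-4})<\sqrt{13n-35}\le 2(n-2)\le\rho(K_{1,n-1})$ works for $n\ge 5$ with $n=4$ checked by hand. The diameter-$4$ part, however, has a genuine gap.

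Two issues there. First, for $l=2$ you assert that minimizing the largest root of the quartic over $a+b+c=n-5$ ``identifies $T_{n,4}^{0,n-5,0}$ as the unique minimizer''; the conclusion is true (and $\rho=2+\sqrt{18n-69}$ is correct), but this is a nontrivial two-parameter optimization of the largest root of a quartic whose coefficients depend on $(a,b,c)$, and no argument is supplied — note that for $d\ge 6$ the analogous minimizer is the \emph{balanced} tree $T_{n,d}^{a,0,c}$ (Theorem~\ref{min-even}), so the answer genuinely flips at $d=4$ and cannot be waved through. Second, and more seriously, for $l\ge 3$ the two estimates you propose provably fail on an infinite family, not on a ``small residual table of boundary cases in $(l,n)$''. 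Take all $s_i=1$ and $p=n-1-2l$ pendants at the centre. Your Rayleigh bound gives only $\rho\ge 4(l-1)$, which exceeds $\sqrt{13n-36}$ only when $l\gtrsim 1+\tfrac14\sqrt{13n}$; your row bound gives $\rho^2\ge 16(l-1)+9(l+p-1)+4=9n+7l-30$, and $9n+7l-30>13n-35$ forces $l>(4n-5)/7$, which is impossible since $l\le(n-1)/2$. Hence for every $n$ the entire range $3\le l\lesssim\sqrt{n}$ with many pendants at the centre is covered by neither bound. The defect is structural: the principal submatrix on $L\cup\{c\}$ discards the value-$3$ entries between the eccentricity-$4$ leaves and the $p$ pendant neighbours of the centre, and those entries carry essentially all the weight in this regime (for $l=3$, $s_i\equiv 1$ the true spectral radius is $4+\sqrt{27n-125}$, driven entirely by them). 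To close the argument you would need either to enlarge the test submatrix to include the centre's pendant neighbours — the quotient on $L_1,\dots,L_l,P$ does give a bound of the right order — or to run the explicit equitable-partition computation for general $(l;s_1,\dots,s_l;p)$ as you did for $l=2$.
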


\begin{lem}[{\cite[Lemma 2.6]{wei2020solutions}}]\label{odd-diam-min}
	Among $\mathcal{T}_{n,d}$ with odd $d\geq 5$, the minimum $\mathcal{E}$-spectral radius is achieved by some $T_{n,d}^{a,b}$, where $a+b=n-d-1$ and $b\geq a\geq 0$.
\end{lem}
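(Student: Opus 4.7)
My plan is to let $T\in\mathcal{T}_{n,d}$ with odd $d=2k+1\geq 5$ attain the minimum $\mathcal{E}$-spectral radius, fix a diametrical path $P:v_0v_1\cdots v_d$, and observe that the center of $T$ is the edge $v_kv_{k+1}$. Any vertex $w\notin V(P)$ lies in a rooted subtree hanging off some path vertex $v_i$, and since every diametrical path must have length exactly $d$, this subtree has depth at most $\min(i,d-i)$. The goal is to show that $T$ must be $T_{n,d}^{a,b}$ for some $a,b$, i.e., every non-path vertex of $T$ is a pendant neighbor of $v_k$ or $v_{k+1}$.

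To accomplish this, I would introduce two local grafting operations on $\mathcal{T}_{n,d}$: \textbf{(A)} if a path vertex $v_i$ with $i\notin\{k,k+1\}$ carries an off-path subtree $S$, detach $S$ from $v_i$ and reattach all its vertices as pendants of $v_k$ (when $i\leq k$) or of $v_{k+1}$ (when $i\geq k+1$); \textbf{(B)} if a neighbor $u$ of $v_k$ or $v_{k+1}$ lying off $P$ is not itself a pendant, replace the subtree rooted at $u$ by an equal number of pendants of $v_k$ (respectively $v_{k+1}$). A short verification using $k\geq 2$ shows that both operations preserve membership in $\mathcal{T}_{n,d}$ (the path $P$ survives and no path longer than $d$ is created), and iterating them starting from $T$ eventually produces a tree of the form $T_{n,d}^{a,b}$. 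Hence if each operation strictly decreases $\xi_1(\mathcal{E}(\cdot))$, the minimality of $T$ forces the claim.

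The main obstacle is proving that operations (A) and (B) strictly decrease the $\mathcal{E}$-spectral radius. Since $\mathcal{E}(T)$ is irreducible for any tree $T$ by the main result of \cite{ecc-main}, its Perron eigenvector $x$ is positive, and I would use the Rayleigh quotient to compare $x^{T}\mathcal{E}(T')x$ with $x^{T}\mathcal{E}(T)x$ under a natural bijection between $V(T)$ and $V(T')$. Unlike for adjacency or distance matrices, the eccentricity matrix's nonzero pattern is not monotone under graph operations: eccentricities of many vertices can shift, and the predicate $d(u,v)=\min\{e(u),e(v)\}$ can be gained or lost at many pairs simultaneously. Handling this cleanly requires a case split based on which side of the central edge the grafted subtree comes from and how the eccentricities of the affected vertices change. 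Fortunately, the low-rank structure granted by Theorem \ref{Inertia-odd} limits the number of ``types'' of entries whose value actually shifts, so the bookkeeping should stay tractable once the cases are correctly organized; the remaining strict inequality then falls out from the positivity of $x$ and the observation that at least one new positive contribution to $x^{T}\mathcal{E}(T')x$ is strictly dominated by the contributions lost from $x^{T}\mathcal{E}(T)x$.
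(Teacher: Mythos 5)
This lemma is imported verbatim from \cite{wei2020solutions}; the present paper contains no proof of it, so your argument has to stand on its own. Your reduction is sound as far as it goes: the two grafting operations preserve $\mathcal{T}_{n,d}$, a minimizer to which neither applies must be some $T_{n,d}^{a,b}$, and the lemma would follow once each operation strictly decreases $\xi_1(\mathcal{E}(\cdot))$. The genuine gap is that this last point --- which is the entire content of the lemma --- is asserted rather than proved. You explicitly defer it as ``the main obstacle,'' and the plan you sketch for it is partly misdirected. Two concrete problems: (i) you take the positive Perron vector $x$ of $\mathcal{E}(T)$ and compare $x^{T}\mathcal{E}(T')x$ with $x^{T}\mathcal{E}(T)x$; but showing $x^{T}\mathcal{E}(T')x<\xi_1(T)$ gives no upper bound on $\xi_1(T')$ unless $x$ is the Perron vector of the \emph{new} tree $T'$ --- the comparison must be set up in that direction. (ii) The appeal to the rank structure from Theorem \ref{Inertia-odd} is irrelevant: inertia counts nonzero eigenvalues and says nothing about which entries of the matrix move under a graft.

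What actually closes the gap is a structural fact you never state. For a tree of odd diameter $d=2k+1$ with central edge $uv=v_kv_{k+1}$, every $x$ in the component $T_u$ of $T-uv$ containing $u$ satisfies $e(x)=d(x,u)+k+1$ (symmetrically in $T_v$), both components have depth exactly $k$, and hence $\mathcal{E}(T)_{xy}\neq 0$ only for $x\in T_u$, $y\in T_v$ with $\max\{d(x,u),d(y,v)\}=k$, in which case the entry equals $\min\{d(x,u),d(y,v)\}+k+1$. With this in hand, your worry that the nonzero pattern is ``gained or lost at many pairs simultaneously'' evaporates: under either operation only the distances $d(\cdot,u)$ of the relocated vertices change (they drop to $1$), while $v_0$ and $v_d$ keep both depths equal to $k$ and all other eccentricities are unchanged; consequently every entry of $\mathcal{E}(T')$ is at most the corresponding entry of $\mathcal{E}(T)$, with a strict drop at the pair formed by $v_d$ and any relocated vertex that was at distance at least $2$ from $v_k$ (such a vertex exists whenever an operation is applicable). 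Since eccentricity matrices of trees are nonnegative and irreducible \cite{ecc-main}, Perron--Frobenius then yields $\xi_1(T')<\xi_1(T)$ directly, with no Rayleigh-quotient bookkeeping. As written, however, your proof stops short of establishing the decisive inequality, so it is incomplete.
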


\begin{thm}[{\cite[Theorem 2.15]{wei2020solutions}}]\label{min-odd}
	The tree $T_{n,d}^{\lfloor \frac{n-d-1}{2}\rfloor,\lceil \frac{n-d-1}{2}\rceil}$ is the unique tree with minimum $\mathcal{E}$-spectral radius among all trees in $\mathcal{T}_{n,d}$ with odd $d\geq 5$.
\end{thm}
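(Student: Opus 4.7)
The plan is to use Lemma \ref{odd-diam-min} to reduce the problem to the one-parameter family $T_{n,d}^{a,b}$ with $a+b=n-d-1$ and $b\geq a\geq 0$, and then to show that the $\mathcal{E}$-spectral radius $\rho(a,b)$ strictly decreases as the product $ab$ grows (with the sum fixed). Since $ab$ is maximized uniquely at the balanced split $a=\lfloor(n-d-1)/2\rfloor$, $b=\lceil(n-d-1)/2\rceil$, this yields the claimed minimizer together with its uniqueness.

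To implement this, write $m=(d-1)/2$ and let $p_1,\ldots,p_a$ and $q_1,\ldots,q_b$ denote the pendants attached to $v_m$ and $v_{m+1}$, respectively. A direct computation of eccentricities in $T_{n,d}^{a,b}$ shows that the only nonzero entries of $\mathcal{E}(T_{n,d}^{a,b})$ lie in the rows and columns of $v_0$ and $v_d$:
\begin{align*}
	\mathcal{E}(v_0,v_j)&=j\ \ (m+1\le j\le d), & \mathcal{E}(v_d,v_j)&=d-j\ \ (0\le j\le m),\\
	\mathcal{E}(v_0,q_k)&=\tfrac{d+3}{2}, & \mathcal{E}(v_d,p_\ell)&=\tfrac{d+3}{2}.
\end{align*}
In particular, every interior path vertex and every pendant has exactly one nonzero $\mathcal{E}$-entry, which is to $v_d$ or to $v_0$.

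Since $\mathcal{E}(T_{n,d}^{a,b})$ is a nonnegative irreducible symmetric matrix, the Perron eigenvector $x$ for $\rho=\rho(a,b)$ is strictly positive and, by the automorphisms permuting the pendants, is constant on each of the two pendant classes; call the values $x_p$ and $x_q$. Reading off the eigen-equations at the interior path vertices and pendants gives $\rho x_i=(d-i)x_d$ for $1\le i\le m$, $\rho x_i=ix_0$ for $m+1\le i\le d-1$, $\rho x_p=\tfrac{d+3}{2}x_d$, and $\rho x_q=\tfrac{d+3}{2}x_0$. Substituting these into the remaining two equations at $v_0$ and $v_d$, and setting $c=(d+3)^2/4$ together with $S=\sum_{j=m+1}^{d-1}j^2$, leads to
\begin{align*}
	(\rho^2-S-bc)\,x_0&=d\rho\,x_d,\\
	(\rho^2-S-ac)\,x_d&=d\rho\,x_0.
\end{align*}
Since $x_0,x_d>0$, eliminating the ratio $x_0/x_d$ and setting $y=\rho^2-S$ reduces the problem to the quadratic
$$y^2-\bigl[(n-d-1)c+d^2\bigr]y+abc^2-d^2S=0.$$

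Under the constraint $a+b=n-d-1$ the coefficient of $y$ is fixed, while the constant term is strictly increasing in $ab$. The larger root of a monic quadratic $y^2-Py+Q$ with real roots equals $\frac{1}{2}\bigl(P+\sqrt{P^2-4Q}\bigr)$, a strictly decreasing function of $Q$, so $\rho^2=y_++S$ strictly decreases as $ab$ grows. Maximizing $ab$ subject to $a+b=n-d-1$ and $0\le a\le b$ gives uniquely $a=\lfloor(n-d-1)/2\rfloor$, $b=\lceil(n-d-1)/2\rceil$, which is therefore the unique minimizer. The main bookkeeping step is cataloguing the nonzero entries of $\mathcal{E}(T_{n,d}^{a,b})$ above; once that is in hand, the Perron-eigenvector analysis and the monotonicity argument on the quadratic are routine.
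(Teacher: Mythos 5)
First, a framing remark: the paper does not prove this statement --- it is imported verbatim from \cite{wei2020solutions} as a quoted tool --- so there is no in-paper proof to compare against; I am judging your argument on its own. Your catalogue of the nonzero entries of $\mathcal{E}(T_{n,d}^{a,b})$ is correct (every vertex other than $v_0,v_d$ attains its eccentricity only at $v_d$ or only at $v_0$), the resulting weighted double-star structure is irreducible, and your $2\times 2$ system is exactly the determinant condition recorded as Theorem \ref{spec odd}, since $S=\sum_{j=(d+1)/2}^{d-1}j^{2}=\tfrac{d(d-1)(7d-5)}{24}=\Gamma(d)$. The monotonicity of the larger root $y_+$ in $Q=abc^{2}-d^{2}S$, and the unique maximization of $ab$ at the balanced split under $a+b=n-d-1$, are also fine; indeed your argument recovers the full strict ordering of Theorem \ref{ordering-odd}.

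The one genuine gap is the unjustified identification $\rho^{2}=y_{+}+S$. Your Perron-eigenvector computation only shows that $y=\rho^{2}-S$ is \emph{some} root of the quadratic, and the distinction matters: the smaller root $y_{-}=\tfrac12\bigl(P-\sqrt{P^{2}-4Q}\bigr)$ is \emph{increasing} in $Q$, so if $\rho$ ever corresponded to $y_{-}$ your monotonicity in $ab$ would reverse. The fix is short and you should include it: positivity of $x_0,x_d$ and $\rho>0$ give $\rho^{2}-S-bc=d\rho\,x_d/x_0>0$, i.e.\ $y>bc$, whereas $2(y_{-}-bc)=(a-b)c+d^{2}-\sqrt{((a-b)c+d^{2})^{2}+4d^{2}(bc+S)}<0$ always, so the Perron value must be $y_{+}$. (Alternatively, an equitable partition together with Theorem \ref{quo-spec} and the inertia count of Theorem \ref{Inertia-odd} shows that $\pm\sqrt{y_{\pm}+S}$ are precisely the four nonzero eigenvalues, whence the largest is $\sqrt{y_{+}+S}$.) A second, smaller caveat: Lemma \ref{odd-diam-min} as stated only says the minimum over $\mathcal{T}_{n,d}$ is \emph{attained} by some $T_{n,d}^{a,b}$; to conclude that the balanced tree is the \emph{unique} minimizer over all of $\mathcal{T}_{n,d}$, and not merely within the family, you need the strict form of that reduction (every tree outside the family has strictly larger $\mathcal{E}$-spectral radius), which is how the lemma is actually established in the source but is not what your write-up invokes.
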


\begin{lem}[{\cite[Lemma 2.8]{wei2020solutions}}]\label{even-diam-min}
	Among all trees in $\mathcal{T}_{n,d}$ with even $d\geq 6$, the minimum $\mathcal{E}$-spectral radius is achieved by some $T_{n,d}^{a,b,c}$, where $a+b+c=n-d-1$ and $c\geq a\geq 0, b\geq 0$.
\end{lem}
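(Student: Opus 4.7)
My plan is to argue by a local-transformation argument on the extremal tree. Let $T \in \mathcal{T}_{n,d}$ with $d = 2d' \geq 6$ attain the minimum $\mathcal{E}$-spectral radius, fix a diametrical path $v_0 v_1 \cdots v_d$ with center $c = v_{d'}$, and note that every vertex off this path lies in a subtree (``branch'') attached to some $v_i$. Since $T$ is a tree of diameter $d$, a branch rooted at $v_i$ has depth at most $\min(i, d-i)$. The goal is to show that in the extremal $T$, all branches are single pendant edges attached only at the three central positions $v_{d'-1}, v_{d'}, v_{d'+1}$, which is precisely the form $T_{n,d}^{a,b,c}$.

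The reduction proceeds in two stages. \emph{Stage 1:} show that a branch rooted at $v_i$ with $i \notin \{d'-1, d', d'+1\}$ can be relocated to $v_{d'}$ (or one of its two neighbours on the path), producing a tree $T' \in \mathcal{T}_{n,d}$ with strictly smaller $\mathcal{E}$-spectral radius. \emph{Stage 2:} show that a branch of depth $\geq 2$ at one of $v_{d'-1}, v_{d'}, v_{d'+1}$ can be ``flattened'' into single-pendant edges at the same vertex, again strictly decreasing $\xi_1$. Iterating these reductions terminates at a tree of the form $T_{n,d}^{a,b,c}$, proving the lemma.

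To compare $\xi_1$ before and after each transformation I would use the fact, proved in \cite{ecc-main}, that $\mathcal{E}(T)$ is a nonnegative irreducible matrix for every tree $T$; hence $\xi_1(T)$ is a simple eigenvalue with a positive Perron eigenvector $x$. After identifying vertex labels between $T$ and $T'$ (the transformation only rewires edges), the inequality $\xi_1(T') - \xi_1(T) \leq y^{\top}(\mathcal{E}(T') - \mathcal{E}(T))y$ for the unit Perron eigenvector $y$ of $\mathcal{E}(T')$, combined with the analogous inequality using $x$ for $T$, gives a two-sided comparison. Strict decrease is then forced by exploiting the symmetries of $x$ and $y$: vertices lying in the same automorphism orbit of a branch receive equal eigenvector values, which pairs off contributions and isolates a sign-determined residue.

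The main technical obstacle is that the non-zero pattern of $\mathcal{E}(T)$ depends on the condition $d_T(u,v) = \min(e_T(u), e_T(v))$, so a local tree modification can simultaneously create and destroy non-zero entries in an entangled way. Theorem \ref{inertia-even} helps by fixing the number of non-zero $\mathcal{E}$-eigenvalues at $2l$, where $l$ is the number of ``deep'' branches at the center. Together with the natural equitable partition by branch and depth-from-$c$, Theorem \ref{quo-spec} then lets one extract all non-trivial $\mathcal{E}$-eigenvalues as eigenvalues of a small quotient matrix, and the spectral-radius comparison reduces to comparing the Perron eigenvalues of two small nonnegative matrices differing by an explicit, sign-determined perturbation — doable, but requiring careful case analysis on the positions of the relocated or flattened vertices.
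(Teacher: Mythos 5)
This lemma is not proved in the paper at all: it is imported verbatim as \cite[Lemma 2.8]{wei2020solutions} and used as a black box, so there is no in-paper proof to compare against. Judged on its own, your proposal is an outline whose essential step is missing. The entire content of the lemma is the claim that each relocation or flattening strictly decreases $\xi_1$, and your argument for that reduces to showing $y^{\top}\bigl(\mathcal{E}(T')-\mathcal{E}(T)\bigr)y\le 0$ for the Perron vector $y$ of $\mathcal{E}(T')$. For adjacency or Laplacian matrices such grafting inequalities work because the perturbation is supported on a few entries of known sign; here it is not. Moving a branch changes the eccentricity of every vertex in that branch, which in turn toggles the condition $d_T(u,v)=\min\{e_T(u),e_T(v)\}$ for pairs $(u,v)$ ranging over the whole tree, so $\mathcal{E}(T')-\mathcal{E}(T)$ has $\Theta(n^2)$ entries of both signs, including cross terms between the moved branch and unrelated parts of $T$ that no automorphism of the branch can pair off. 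You name this as ``the main technical obstacle'' but do not resolve it, and without resolving it nothing is proved.

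Two of the supporting tools you invoke also do not do what you need. The partition of a \emph{general} tree in $\mathcal{T}_{n,d}$ by branch and depth from the centre is not an equitable partition of $\mathcal{E}(T)$: whether the $(u,v)$ entry is nonzero depends on the eccentricities of $u$ and $v$, which are governed by the deepest branch globally, so two vertices at the same depth in different branches typically have different row patterns. (The equitable partitions used elsewhere in this paper exist only for the highly symmetric trees $T_{n,d}^{a,b,c}$, $T_{n,d}^{a,b}$.) Likewise Theorem \ref{inertia-even} fixes the inertia of $\mathcal{E}(T)$ but gives no control on the value of $\xi_1$, so it cannot substitute for the missing comparison. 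A workable route (and essentially what \cite{wei2020solutions} does) is to avoid perturbation arguments altogether and instead compare $\xi_1(T)$ with the Perron value of an explicit nonnegative matrix dominated entrywise by a suitable principal submatrix or quotient, reducing to the interlacing-type statements already quoted in Section 2; if you want to keep your local-transformation scheme, you must first prove a lemma describing exactly which entries of $\mathcal{E}$ change under each move and with which signs, and that is where the real work lies.
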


\begin{thm}[{\cite[Theorem 2.16]{wei2020solutions}}]\label{min-even}
	The tree $T_{n,d}^{\lfloor \frac{n-d-1}{2}\rfloor,0,\lceil \frac{n-d-1}{2}\rceil}$ is the unique tree with minimum $\mathcal{E}$-spectral radius among all trees in $\mathcal{T}_{n,d}$ with even $d\geq 6$.
\end{thm}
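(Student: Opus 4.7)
The plan is to apply Lemma~\ref{even-diam-min} to restrict attention to the family $\{T_{n,d}^{a,b,c} : a+b+c=n-d-1,\ c\geq a\geq 0,\ b\geq 0\}$ and then pin down the optimizing triple. A direct eccentricity computation shows that, when $d\geq 6$ is even, $v_0$ and $v_d$ are the only vertices of $T_{n,d}^{a,b,c}$ with eccentricity $d$, and that $\mathcal{E}(u,v)=0$ whenever $u,v\notin\{v_0,v_d\}$; that is, every nonzero entry of $\mathcal{E}(T_{n,d}^{a,b,c})$ lies in the row or column of $v_0$ or $v_d$. In particular, $\mathcal{E}$ has rank exactly $4$, in agreement with the inertia $(2,2,n-4)$ furnished by Theorem~\ref{inertia-even}.

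Writing $\delta=\mathcal{E}(v_0,v_d)=d$ together with $X_v=\mathcal{E}(v_0,v)$ and $Y_v=\mathcal{E}(v_d,v)$ for $v\notin\{v_0,v_d\}$, the eccentricity matrix decomposes as
\[
\mathcal{E}=e_{v_0}X^{T}+Xe_{v_0}^{T}+e_{v_d}Y^{T}+Ye_{v_d}^{T}+\delta\bigl(e_{v_0}e_{v_d}^{T}+e_{v_d}e_{v_0}^{T}\bigr).
\]
Every nonzero eigenvalue of $\mathcal{E}$ is therefore an eigenvalue of the $4\times 4$ compressed operator on $\mathrm{span}\{e_{v_0},e_{v_d},X,Y\}$, whose characteristic polynomial a short Schur-complement calculation gives as
\[
\lambda^{4}-(p+q+\delta^{2})\lambda^{2}-2\delta r\,\lambda+(pq-r^{2}),
\]
where $p=\|X\|^{2}$, $q=\|Y\|^{2}$, and $r=X\cdot Y$. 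An equivalent route is to form the quotient matrix of the equitable partition induced by the automorphism orbits of $T_{n,d}^{a,b,c}$ and invoke Theorem~\ref{quo-spec} combined with Perron--Frobenius (applicable since $\mathcal{E}(T)$ is irreducible for trees by \cite{ecc-main}) to identify the $\mathcal{E}$-spectral radius with the largest root of this quartic. Explicit bookkeeping yields $p$, $q$, $r$ as quadratic polynomials in $a,b,c$ with coefficients depending on $d$.

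It remains to minimize the largest root $\rho=\rho(a,b,c)$ of this quartic over the admissible triples with $a+b+c=n-d-1$. I would proceed in two reductions: (i) if $b\geq 1$, show that replacing $(a,b,c)$ by $(a,b-1,c+1)$ strictly decreases $\rho$, forcing $b=0$ at the optimum; (ii) among triples with $b=0$, show that $\rho$ is strictly minimized at $|a-c|\leq 1$. Each step can be executed by evaluating the characteristic quartic of the target configuration at the Perron root of the source configuration and tracking the sign of the result, using the explicit dependence of $p,q,r$ on $(a,b,c)$. A complementary Rayleigh-quotient comparison, lifting the Perron eigenvector of one configuration to a trial vector for the other, provides a useful cross-check. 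The main obstacle is step (i): pendants at $v_{d/2}$ contribute to both $X$ and $Y$ (and so boost $r$), whereas pendants at $v_{d/2\pm 1}$ contribute only to $X$ or only to $Y$, so monotonicity of $\rho$ under the shift is not apparent from coarse bounds and will require pushing through the explicit quartic together with the constraint $a+b+c=n-d-1$.
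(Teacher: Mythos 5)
This statement is one the paper imports verbatim from \cite{wei2020solutions} (Theorem 2.16 there); the present article contains no proof of it, so there is no in-paper argument to compare against, and your attempt must stand on its own. Your structural reduction is correct: for even $d\geq 6$, the vertices $v_0,v_d$ are the only ones of eccentricity $d$, every nonzero entry of $\mathcal{E}(T_{n,d}^{a,b,c})$ lies in their rows and columns, the rank is $4$ (consistent with Theorem \ref{inertia-even}), and the compression onto $\mathrm{span}\{e_{v_0},e_{v_d},X,Y\}$ gives the quartic $(\lambda^2-p)(\lambda^2-q)-(d\lambda+r)^2$ with $p=\Theta(d)+(\frac{d+2}{2})^2b+(\frac{d+4}{2})^2c$, $q=\Theta(d)+(\frac{d+2}{2})^2b+(\frac{d+4}{2})^2a$ and $r=(\frac{d}{2})^2+(\frac{d+2}{2})^2b$. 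This is exactly the determinantal equation the paper quotes as Theorem \ref{spec even}, so up to that point you have merely re-derived known machinery.

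The genuine gap is that the theorem's actual content, the minimization over admissible triples, is only announced, not carried out. Steps (i) (that $(a,b,c)\mapsto(a,b-1,c+1)$ strictly decreases the Perron root) and (ii) (that among $b=0$ configurations the balanced one is the strict minimizer) are where essentially all of the difficulty lives; you yourself flag (i) as an unresolved obstacle, and no sign computation or Rayleigh comparison is actually performed. Note that under the shift in (i) one has $p\mapsto p+(\frac{d+4}{2})^2-(\frac{d+2}{2})^2$, $q\mapsto q-(\frac{d+2}{2})^2$, $r\mapsto r-(\frac{d+2}{2})^2$, so $p+q$ and $r$ both decrease (since $(\frac{d+4}{2})^2<2(\frac{d+2}{2})^2$ for $d\geq 6$) but the constant term $pq-r^2$ moves in a direction that must be controlled; this is doable but is precisely the content you have not supplied. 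A second, smaller gap concerns uniqueness: Lemma \ref{even-diam-min} only asserts that the minimum over $\mathcal{T}_{n,d}$ is \emph{attained} by some $T_{n,d}^{a,b,c}$; to conclude that $T_{n,d}^{\lfloor\frac{n-d-1}{2}\rfloor,0,\lceil\frac{n-d-1}{2}\rceil}$ is the \emph{unique} minimizer in all of $\mathcal{T}_{n,d}$ you also need strict inequality for every tree outside that three-parameter family, which your argument does not address. As it stands the proposal is a correct and promising setup, but not a proof.
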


\begin{lem}[{\cite[Lemma 2.9]{wei2020solutions}}]\label{diam-red-odd}
	For odd $d\geq 7$, one has $\xi_1(T_{n,d-2}^{a+1,b+1})<\xi_1(T_{n,d}^{a,b})$.
\end{lem}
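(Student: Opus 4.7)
The plan is to reduce the problem to an explicit 4-dimensional algebraic comparison by exploiting the very sparse structure of $\mathcal{E}(T_{n,d}^{a,b})$. For odd $d\ge 5$, a case analysis of the eccentricities shows that every nonzero entry of $\mathcal{E}(T_{n,d}^{a,b})$ lies in the row or column of an endpoint of the diametrical path: $\mathcal{E}(v_i,v_d)=d-i$ for $0\le i\le m$, $\mathcal{E}(v_0,v_j)=j$ for $m+1\le j\le d$, and each pendant $w$ at $v_m$ (resp.\ $w'$ at $v_{m+1}$) connects only to $v_d$ (resp.\ $v_0$) with weight $(d+3)/2$, where $m=(d-1)/2$. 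In particular $\rank\mathcal{E}(T_{n,d}^{a,b})=4$, consistent with Theorem~\ref{Inertia-odd}.

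Partitioning an eigenvector of $\mathcal{E}(T_{n,d}^{a,b})$ as $(x_0,x_d,\mathbf{z})$ on $\{v_0\}\cup\{v_d\}\cup(V\setminus\{v_0,v_d\})$ and eliminating $\mathbf{z}=(x_0\mathbf{r}+x_d\mathbf{s})/\lambda$ from the resulting system (using $\mathbf{r}^T\mathbf{s}=0$, which holds since the supports of $\mathbf{r}$ and $\mathbf{s}$ are the disjoint sets $\{v_{m+1},\dots,v_{d-1}\}\cup Y'$ and $\{v_1,\dots,v_m\}\cup Y$) yields the equation
$$(\lambda^2-S)(\lambda^2-R')=\lambda^2 d^2,$$
where $S=\sum_{k=m+1}^{d-1}k^2+a\bigl(\tfrac{d+3}{2}\bigr)^2$ and $R'=\sum_{k=m+1}^{d-1}k^2+b\bigl(\tfrac{d+3}{2}\bigr)^2$. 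Hence $\mu_+:=\xi_1(T_{n,d}^{a,b})^2$ is the larger root of $h(\mu):=\mu^2-B\mu+C$, where $B=S+R'+d^2$ and $C=SR'$.

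Let $S^{\ast},{R'}^{\ast},B^{\ast},C^{\ast},\mu_+^{\ast}$ denote the analogous quantities for $T_{n,d-2}^{a+1,b+1}$. A direct computation gives
$$S-S^{\ast}=\tfrac{3}{2}(d-1)(d-3)+a(d+2),\qquad R'-{R'}^{\ast}=\tfrac{3}{2}(d-1)(d-3)+b(d+2),$$
both strictly positive for $d\ge 7$. Using $(\mu_+^{\ast})^2=B^{\ast}\mu_+^{\ast}-C^{\ast}$ yields the clean identity
$$h(\mu_+^{\ast})=-(B-B^{\ast})\mu_+^{\ast}+(C-C^{\ast}),$$
so the desired inequality $\mu_+>\mu_+^{\ast}$ is equivalent to $(B-B^{\ast})\mu_+^{\ast}>C-C^{\ast}$. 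Combining the lower bound $\mu_+^{\ast}>\max(S^{\ast},{R'}^{\ast})$ (which follows from evaluating $h^{\ast}$ at $\max(S^{\ast},{R'}^{\ast})$ to obtain $-(d-2)^2\max(S^{\ast},{R'}^{\ast})<0$) with the identity $(\mu_+^{\ast}-S^{\ast})(\mu_+^{\ast}-{R'}^{\ast})=(d-2)^2\mu_+^{\ast}$ coming from the analogous quadratic reduction applied to $T_{n,d-2}^{a+1,b+1}$, together with the factorisation $C-C^{\ast}=R'(S-S^{\ast})+S^{\ast}(R'-{R'}^{\ast})$, this reduces to a polynomial inequality in $d,a,b$ that can be verified termwise.

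The main obstacle will be this last step: both $B$ and $C$ decrease from the old tree to the new one, and $\mu_+$ depends on them with opposite monotonicities, so neither alone is individually decisive. One must combine them through a sharp enough lower bound on $\mu_+^{\ast}$, and the identity $(\mu_+^{\ast}-S^{\ast})(\mu_+^{\ast}-{R'}^{\ast})=(d-2)^2\mu_+^{\ast}$ is precisely what supplies the missing information to dominate $C-C^{\ast}$ by $(B-B^{\ast})\mu_+^{\ast}$.
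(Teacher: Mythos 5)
This lemma is imported from \cite{wei2020solutions} and is not proved in the present paper, so there is no in-paper argument to compare against; your reduction itself is sound and in fact recovers Theorem~\ref{spec odd}: the sparsity analysis, the elimination of $\mathbf{z}$, the identity $\sum_{k=(d+1)/2}^{d-1}k^2=\Gamma(d)$, and the formulas $S-S^{\ast}=\tfrac32(d-1)(d-3)+a(d+2)$, $R'-{R'}^{\ast}=\tfrac32(d-1)(d-3)+b(d+2)$ all check out.

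The gap is in the final comparison. The identity $h(\mu_+^{\ast})=-(B-B^{\ast})\mu_+^{\ast}+(C-C^{\ast})$ is correct, but $h(\mu_+^{\ast})<0$ is only \emph{sufficient} for $\mu_+^{\ast}<\mu_+$, not equivalent to it: $h(\mu_+^{\ast})\geq 0$ also occurs when $\mu_+^{\ast}$ lies below the \emph{smaller} root $\mu_-$ of $h$, and this genuinely happens, so the inequality $(B-B^{\ast})\mu_+^{\ast}>C-C^{\ast}$ that you reduce to is false in general. Take $d=7$ and $a=b=N$ with $N$ large (admissible, since $n=2N+8$ is unconstrained). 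Then $S=R'=77+25N$ and $S^{\ast}={R'}^{\ast}=41+16N$, giving $\mu_{\pm}=25N\pm 35\sqrt{N}+O(1)$ and $\mu_+^{\ast}=16N+20\sqrt{N}+O(1)$, so $\mu_+^{\ast}<\mu_-<\mu_+$ and $h(\mu_+^{\ast})>0$. Concretely, for $N=100$ one gets $\mu_+^{\ast}\approx 1856$, $\mu_-\approx 2245$, $\mu_+\approx 2958$, and $(B-B^{\ast})\mu_+^{\ast}\approx 1896\cdot 1856\approx 3.52\times 10^{6}$ while $C-C^{\ast}=2577^2-1641^2\approx 3.95\times 10^{6}$. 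The lemma's conclusion still holds here, but not via your inequality, and no termwise estimation can establish a false target. Two repairs: either supplement $h(\mu_+^{\ast})<0$ with the alternative $\mu_+^{\ast}<B/2$ (if $h(\mu_+^{\ast})\geq 0$ and $\mu_+^{\ast}<B/2$ then $\mu_+^{\ast}\leq\mu_-<\mu_+$), which forces a case split; or, more simply, write $\mu_+=\tfrac12\bigl(S+R'+d^2+\sqrt{(R'-S)^2+2d^2(S+R')+d^4}\bigr)$ and note that $S+R'>S^{\ast}+{R'}^{\ast}$, $d^2>(d-2)^2$, and $(R'-S)^2=(b-a)^2\bigl(\tfrac{d+3}{2}\bigr)^4\geq(b-a)^2\bigl(\tfrac{d+1}{2}\bigr)^4=({R'}^{\ast}-S^{\ast})^2$, so every term strictly dominates its starred counterpart and $\mu_+>\mu_+^{\ast}$ follows at once.
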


\begin{lem}[{\cite[ Lemma 2.10]{wei2020solutions}}]\label{diam-red-even}
	For even $d\geq 6$, one has $\xi_1(T_{n,d-1}^{a,b+c+1})<\xi_1(T_{n,d}^{a,b,c})$.
\end{lem}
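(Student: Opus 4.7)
My plan is to reduce $\xi_1$ in each case to the spectral radius of a small quotient matrix via an equitable partition, exploit the rank-$4$ block structure of $\mathcal{E}(T)$ that holds for both trees under the hypothesis $d\geq 6$, and then compare two explicit characteristic polynomials.

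I would first set up equitable partitions. For $T=T_{n,d}^{a,b,c}$ take the partition $\pi_2$ consisting of the singletons $\{v_0\},\ldots,\{v_d\}$ together with the three pendant classes $A,B,C$ of sizes $a,b,c$; for $T'=T_{n,d-1}^{a,b+c+1}$ take the partition $\pi_1$ consisting of $\{v_0\},\ldots,\{v_{d-1}\}$ together with the two pendant classes of sizes $a$ and $b+c+1$. Both are equitable because pendants in the same class are exchangeable by a tree automorphism. By Theorem~\ref{quo-spec} the spectra of the quotient matrices $Q_2$ (order $d+4$) and $Q_1$ (order $d+2$) are contained in the respective $\mathcal{E}$-spectra; and since $\mathcal{E}(T)$ is irreducible for every tree, Perron-Frobenius yields a simple positive eigenvector for $\xi_1$ which is forced to be constant on each class by the same automorphism symmetry, so $\xi_1(T)=\rho(Q_2)$ and $\xi_1(T')=\rho(Q_1)$.

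Next I would verify the rank-$4$ structure. A direct case analysis (checking eccentricities class by class) shows that for $d\geq 6$ the only nonzero entries of $\mathcal{E}(T)$ lie in the rows and columns of the diametric endpoints $v_0$ and $v_d$, with an analogous statement for $T'$ at $v_0$ and $v_{d-1}$. This matches the inertia computations of Theorems~\ref{Inertia-odd} and~\ref{inertia-even}. A Schur-complement reduction on each matrix expresses the four nonzero eigenvalues as roots of a quartic of the form
\[
(\lambda^2-\rho^2)(\lambda^2-\sigma^2)=(\delta\lambda+\tau)^2,
\]
where $\delta$ is the diameter and the scalars $\rho^2,\sigma^2,\tau$ are weighted sums of squared eccentricities over the non-endpoint vertices. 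Explicitly for $T$,
\[
\sigma^2=\sum_{k=d/2}^{d-1}k^2+a(d/2+2)^2+b(d/2+1)^2,\qquad \tau=(d/2)^2+b(d/2+1)^2,
\]
with $\rho^2$ obtained by swapping $a\leftrightarrow c$. For $T'$ the analogous computation gives $\tau'=0$ (no vertex of $T'$ has nonzero $\mathcal{E}$-entry to both endpoints), so its quartic collapses to a quadratic in $\lambda^2$ and yields the closed form
\[
\xi_1(T')^2=\tfrac12\Bigl({\rho'}^2+{\sigma'}^2+(d-1)^2+\sqrt{({\rho'}^2-{\sigma'}^2)^2+2(d-1)^2({\rho'}^2+{\sigma'}^2)+(d-1)^4}\Bigr).
\]

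To conclude, I would evaluate the quartic of $T'$ at $\xi^\ast=\xi_1(T)$ and show it is strictly positive, then use Cauchy interlacing on the $2\times 2$ submatrix of $\mathcal{E}(T)$ indexed by $v_0,v_d$ to obtain $\xi^\ast\geq d$ and show that the smaller positive root of the $T'$-quartic is less than $d$; together these force $\xi_1(T')<\xi^\ast$. The main obstacle is this algebraic comparison: all of $\rho,\sigma,\tau,\delta$ shift when $d$ decreases by one, and the vertex $v_{d/2+1}$ changes role from an interior path vertex contributing to the $v_0$-column to a pendant at $v_{d/2}$. Term-by-term tracking should show that the loss from replacing the $(d/2+2)^2$-weights by $(d/2+1)^2$-weights and from truncating the path-sum $\sum_{k=d/2}^{d-1}k^2$ strictly outweighs the gain from the extra pendant, with $d\geq 6$ being the sharp threshold for the inequality.
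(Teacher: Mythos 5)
First, note that the paper does not prove this lemma: it is quoted verbatim from Wei et al.\ \cite{wei2020solutions} as a preliminary, so there is no in-paper proof to compare against. Judged on its own terms, your setup is sound and consistent with what the paper records elsewhere: the orbit partitions are equitable, $\xi_1$ does equal the spectral radius of the quotient matrix (Perron--Frobenius plus irreducibility of $\mathcal{E}$ for trees), and for $d\geq 6$ the nonzero entries of $\mathcal{E}(T_{n,d}^{a,b,c})$ do sit only in the rows and columns of $v_0,v_d$, yielding exactly the quartics of Theorems~\ref{spec odd} and~\ref{spec even} (your $\sum_{k=d/2}^{d-1}k^2$ agrees with $\Theta(d)$, and $\tau'=0$ for the odd-diameter tree).

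However, there are two genuine gaps. The decisive one is that the inequality itself is never established: the entire content of the lemma is the claim that the $T'$-quartic is positive at $\xi_1(T)$ (equivalently, that the ``loss outweighs the gain'' in your term-by-term tracking), and you explicitly defer this with ``should show.'' Everything before that point is bookkeeping that both sides already share; without the algebraic comparison, nothing is proved. Second, the auxiliary step you propose for locating $\xi_1(T)$ above the \emph{second} positive root of the $T'$-quartic is false as stated. You want $\xi_2(T_{n,d-1}^{a,b+c+1})<d$, but this fails badly: for $d=6$ and $T'=T_{n,5}^{a,a+1}$ one has, from Lemma~\ref{dstar-spec}-type formulas (cf.\ the computation in Theorem~\ref{diam5-ener}), $\xi_1(T')^2\xi_2(T')^2=400(2a+1)+256a(a+1)+625$ while $\xi_1(T')^2\leq 16(2a+1)+75$, so $\xi_2(T')^2\gtrsim 8a\to\infty$; the second $\mathcal{E}$-eigenvalue is not uniformly bounded by the diameter. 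The needed inequality $\xi_2(T')<\xi_1(T)$ can be rescued (e.g.\ via $\xi_2(T')^2\leq\min\{\rho'^2,\sigma'^2\}=\Gamma(d-1)+(\tfrac{d+2}{2})^2a$ together with $\xi_1(T)^2\geq\Theta(d)+(\tfrac{d+4}{2})^2a$), but that is a different argument from the one you give, and it still leaves the main comparison $g(\xi_1(T))>0$ unproved.
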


\begin{thm}[{\cite{wei2020solutions}}]\label{spec odd}
	For odd $d\geq 5$, the $\mathcal{E}$-spectral radius of $T_{n,d}^{a,b}$ is the largest root of 
	$$\left| {\begin{array}{cc}
			{\rho}^2-\Gamma(d)-\big(\frac{d+3}{2}\big)^2b & -d\rho \\
			-d\rho & {\rho}^2-\Gamma(d)-\big(\frac{d+3}{2}\big)^2a\\
	\end{array} } \right|=0, $$
	where $\Gamma(d)=\frac{d(d-1)(7d-5)}{24}$.
\end{thm}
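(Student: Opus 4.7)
The plan is to pin down the sparsity pattern of $\mathcal{E}(T_{n,d}^{a,b})$ explicitly, then use Perron--Frobenius together with the pendant symmetry to collapse the eigenvalue problem for $\rho:=\rho_1(\mathcal{E}(T_{n,d}^{a,b}))$ to a $2\times 2$ linear system in the entries at $v_0$ and $v_d$.

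First, with $m=(d-1)/2$ and pendants $p_1,\ldots,p_a$ (resp.\ $q_1,\ldots,q_b$) attached to $v_m$ (resp.\ $v_{m+1}$), a direct computation yields $e(v_0)=e(v_d)=d$, $e(v_i)=d-i$ for $1\le i\le m$, $e(v_j)=j$ for $m+1\le j\le d-1$, and $e(p_k)=e(q_l)=(d+3)/2$. Using the defining condition $d(u,w)=\min\{e(u),e(w)\}$ pairwise (and $d\ge 5$ to rule out the pendant--pendant and cross path--pendant entries), I would show that the only nonzero entries of $\mathcal{E}(T_{n,d}^{a,b})$ are
\[
\mathcal{E}(v_i,v_d)=d-i\ (0\le i\le m),\ \ \mathcal{E}(v_0,v_j)=j\ (m+1\le j\le d),\ \ \mathcal{E}(p_k,v_d)=\mathcal{E}(q_l,v_0)=\tfrac{d+3}{2}.
\]
In particular, every interior path vertex and every pendant contributes a single nonzero row, each adjacent only to $v_d$ or to $v_0$.

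Since $\mathcal{E}(T_{n,d}^{a,b})$ is nonnegative and irreducible, Perron--Frobenius produces a strictly positive eigenvector $x$ for $\rho$, and the automorphisms permuting each pendant block force $x_{p_k}=s$ and $x_{q_l}=t$ for constants $s,t>0$. Reading off the eigenequations at the interior and pendant vertices gives
\[
x_{v_i}=\tfrac{d-i}{\rho}x_{v_d}\ (1\le i\le m),\ \ x_{v_j}=\tfrac{j}{\rho}x_{v_0}\ (m+1\le j\le d-1),\ \ s=\tfrac{d+3}{2\rho}x_{v_d},\ \ t=\tfrac{d+3}{2\rho}x_{v_0}.
\]
Substituting these expressions into the eigenequations at $v_0$ and $v_d$ and multiplying through by $\rho$ produces the homogeneous system
\begin{align*}
\bigl(\rho^2-\Sigma-\bigl(\tfrac{d+3}{2}\bigr)^2 b\bigr)x_{v_0}-d\rho\, x_{v_d}&=0,\\
-d\rho\, x_{v_0}+\bigl(\rho^2-\Sigma-\bigl(\tfrac{d+3}{2}\bigr)^2 a\bigr)x_{v_d}&=0,
\end{align*}
where $\Sigma=\sum_{j=m+1}^{d-1}j^2=\sum_{i=1}^{m}(d-i)^2$. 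Nontriviality of $(x_{v_0},x_{v_d})$ forces the $2\times 2$ determinant to vanish, and a short calculation with $m=(d-1)/2$ gives $\Sigma=\tfrac{d(d-1)(7d-5)}{24}=\Gamma(d)$.

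The main obstacle is the first step: carefully ruling out every potential nonzero entry of $\mathcal{E}(T_{n,d}^{a,b})$ not on the list above, especially the interior path--path, pendant--pendant, and cross path--pendant pairs. An equivalent route uses Theorem \ref{quo-spec}: the partition $\{v_0\},\{v_1\},\ldots,\{v_d\},\{p_1,\ldots,p_a\},\{q_1,\ldots,q_b\}$ is equitable, and after eliminating the pendant classes and the singleton interior path classes from the quotient matrix, the same $2\times 2$ Schur complement appears. Finally, viewed as a quadratic in $\rho^2$, the resulting determinantal equation has two positive solutions, and $\rho>0$ is necessarily the largest by Perron--Frobenius, so it equals the largest root as claimed.
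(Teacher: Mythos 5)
Your proposal is correct. Note that the paper itself gives no proof of this statement: Theorem \ref{spec odd} is imported from \cite{wei2020solutions} as a preliminary, so there is no in-paper argument to compare against. That said, your method is exactly the technique the paper does use for the analogous computations in Section 4 (the equitable partitions $\Pi_1$ and $\Pi_4$ for $T_{n,5}^{a,b}$ and $T_{n,7}^{a,b}$), and your quartic $\rho^4-(2\Gamma(d)+(\tfrac{d+3}{2})^2(a+b)+d^2)\rho^2+(\Gamma(d)+(\tfrac{d+3}{2})^2a)(\Gamma(d)+(\tfrac{d+3}{2})^2b)=0$ specializes correctly to the characteristic polynomials of $Q_{\Pi_1}$ and $Q_{\Pi_4}$ at $d=5,7$, which is a good consistency check. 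Your sparsity analysis of $\mathcal{E}(T_{n,d}^{a,b})$ (every nonzero entry meets $v_0$ or $v_d$, with the stated weights) is right, as is the identity $\sum_{j=(d+1)/2}^{d-1}j^2=\Gamma(d)$. The one step you should make explicit is the final claim that the spectral radius is the \emph{largest} root: the Perron eigenvector argument alone only shows that $\rho_1(\mathcal{E})$ is \emph{some} root of the determinantal equation. To finish, you need that the other positive root is also an eigenvalue of $\mathcal{E}$, which is exactly what your equitable-partition route delivers via Theorem \ref{quo-spec} (all four roots lie in $\Spec(Q)\subseteq\Spec(\mathcal{E})$, so the largest root is an eigenvalue, hence at most $\rho_1(\mathcal{E})$, hence equal to it); alternatively one can invoke Theorem \ref{Inertia-odd} to see that the four nonzero eigenvalues of $\mathcal{E}$ are precisely the four roots. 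So state the quotient-matrix version as the actual proof rather than as an ``equivalent route,'' and the argument is complete.
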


\begin{thm}[{\cite{wei2020solutions}}]\label{spec even}
	For even $d\geq 6$, the $\mathcal{E}$-spectral radius of $T_{n,d}^{a,b,c}$ is the largest root of 
	$$\left| {\begin{array}{cc}
			{\rho}^2-\Theta(d)-\big(\frac{d+2}{2}\big)^2b-\big(\frac{d+4}{2}\big)^2c & -d\rho-\big(\frac{d}{2}\big)^2-\big(\frac{d+2}{2}\big)^2b \\
			-d\rho-\big(\frac{d}{2}\big)^2-\big(\frac{d+2}{2}\big)^2b & {\rho}^2-\Theta(d)-\big(\frac{d+2}{2}\big)^2b-\big(\frac{d+4}{2}\big)^2a\\
	\end{array} } \right|=0, $$
	where $\Theta(d)=\frac{d(d-1)(7d-2)}{24}$.
\end{thm}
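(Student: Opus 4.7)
The plan is to exploit the very sparse structure of $\mathcal{E}(T_{n,d}^{a,b,c})$: for $d\ge 6$ all non-zero entries involve at least one of the diametrical endpoints $v_0,v_d$, which collapses the spectral problem to a $2\times 2$ secular equation.

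First I would record the eccentricities. With $v_{d/2}$ the unique center, direct distance computations give $e(v_0)=e(v_d)=d$, $e(v_{d/2})=d/2$, $e(v_i)=d-i$ for $1\le i\le d/2-1$, $e(v_i)=i$ for $d/2+1\le i\le d-1$, each pendant at $v_{d/2-1}$ or $v_{d/2+1}$ has eccentricity $d/2+2$, and each pendant at $v_{d/2}$ has eccentricity $d/2+1$. Inspecting the defining condition $d(u,v)=\min\{e(u),e(v)\}$ case by case, one then checks that every non-zero entry $\mathcal{E}(T)_{uv}$ satisfies $\{u,v\}\cap\{v_0,v_d\}\ne\emptyset$; the hypothesis $d\ge 6$ is used precisely here to rule out coincidences such as $d(u_i,x_k)=4=\min\{e(u_i),e(x_k)\}$ which would occur at $d=4$.

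Ordering the vertices with $S=\{v_0,v_d\}$ first, this yields the block form
\[
\mathcal{E}(T)=\begin{pmatrix} A_{SS} & B \\ B^{T} & 0 \end{pmatrix},\qquad A_{SS}=\begin{pmatrix} 0 & d\\ d & 0\end{pmatrix},
\]
where the $v_0$-row of $B$ records weights $d/2,d/2+1,\dots,d-1$ on $v_{d/2},\dots,v_{d-1}$, weight $d/2+1$ on each $w_j$, and weight $d/2+2$ on each $x_k$, and the $v_d$-row is its mirror image on the other half of the path together with the $u_i$'s and $w_j$'s. For any non-zero eigenvalue $\rho$ with eigenvector $(y,z)^{T}$ split according to $(S,V\setminus S)$, the lower block gives $B^{T}y=\rho z$; substituting into the upper block yields $\bigl(\rho^{2}I-\rho A_{SS}-BB^{T}\bigr)y=0$. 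Therefore every non-zero $\mathcal{E}$-eigenvalue is a root of the quartic $\det(\rho^{2}I-\rho A_{SS}-BB^{T})=0$. By Theorem~\ref{inertia-even} the inertia of $\mathcal{E}(T)$ is $(2,2,n-4)$, so $\mathcal{E}(T)$ has exactly four non-zero eigenvalues and the quartic captures all of them.

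It remains to compute the $2\times 2$ matrix $BB^{T}$. The summation identity
\[
\sum_{i=d/2}^{d-1} i^{2}=\frac{d(d-1)(7d-2)}{24}=\Theta(d)
\]
accounts for the path contribution to each diagonal entry; adding the pendant contributions produces $\Theta(d)+b\bigl(\tfrac{d+2}{2}\bigr)^{2}+c\bigl(\tfrac{d+4}{2}\bigr)^{2}$ and $\Theta(d)+a\bigl(\tfrac{d+4}{2}\bigr)^{2}+b\bigl(\tfrac{d+2}{2}\bigr)^{2}$, matching the claimed diagonals. The off-diagonal inner product of the two rows of $B$ is non-zero only on the columns $v_{d/2}$ and $w_1,\dots,w_b$, producing $(d/2)^{2}+b\bigl(\tfrac{d+2}{2}\bigr)^{2}$, which combined with the $-d\rho$ from $A_{SS}$ gives the stated off-diagonal. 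Substituting yields the theorem's determinant, and the $\mathcal{E}$-spectral radius is its largest root. The main obstacle is the case analysis verifying that no pair in $V\setminus\{v_0,v_d\}$ contributes a non-zero entry; this is the only step where $d\ge 6$ is genuinely used, and it is what guarantees the reduction to a $2\times 2$ secular equation.
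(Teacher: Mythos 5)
The paper does not prove this statement; it is imported verbatim from \cite{wei2020solutions} (where it is obtained via an equitable partition and its quotient matrix, the same device this paper uses in Section~4, e.g.\ for $T_{n,5}^{a,b}$ and $T_{n,6}^{a,b,a}$). Your argument reaches the same quartic by a genuinely different and correct route: you observe that for even $d\geq 6$ every nonzero entry of $\mathcal{E}(T_{n,d}^{a,b,c})$ involves $v_0$ or $v_d$, write the matrix in the block form $\left(\begin{smallmatrix} A_{SS} & B\\ B^{T} & 0\end{smallmatrix}\right)$, and eliminate the $z$-block to get the secular equation $\det(\rho^{2}I-\rho A_{SS}-BB^{T})=0$. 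Your eccentricity computations, the case analysis showing the lower-right block vanishes (with $d\ge 6$ used exactly where you say, e.g.\ to kill $d(u_i,x_k)=4$), and the evaluation of $BB^{T}$ (the identity $\sum_{i=d/2}^{d-1}i^{2}=\Theta(d)$ and the off-diagonal overlap on $v_{d/2}$ and the $w_j$'s) all check out and reproduce the stated entries. The quotient-matrix route buys a ready-made citation to Theorem~\ref{quo-spec}; your route buys a smaller ($2\times2$ rather than $\sim 8\times 8$) computation and makes transparent why exactly four nonzero eigenvalues appear. One small point to make explicit: to conclude that the \emph{largest} root of the quartic is the spectral radius you also need the converse inclusion, namely that every nonzero root $\rho_0$ of the quartic is an eigenvalue; this is immediate by taking $y\neq 0$ in the kernel of $\rho_0^{2}I-\rho_0A_{SS}-BB^{T}$ and setting $z=\rho_0^{-1}B^{T}y$, but your counting argument alone (four nonzero eigenvalues versus a degree-four polynomial) does not quite rule out a spurious larger root without matching multiplicities.
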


\begin{thm}[{\cite[Theorem 2.13]{wei2020solutions}}]\label{min-gen}
	Let $T$ be a tree on $n$ vertices.
	\begin{enumerate}
		\item If $4 \leq n\leq 15$, then 
		$$\xi_1(T)\geq \sqrt{\frac{13n-35+\sqrt{(13n-35)^2-64(n-3)}}{2}} $$ with equality if and only if $T\cong T_{n,3}^{0,n-4}$.
		\item If $n\geq 16$, then $$\xi_1(T)\geq
		\begin{cases}
			\text{$\sqrt{\frac{16n-21+\sqrt{800n-1419}}{2}}$,} & \quad\text{if $n$ is odd;}\\
			\text{$\sqrt{\frac{16n-21+5\sqrt{32n-67}}{2}}$,} & \quad \text{if $n$ is even.}
		\end{cases} $$
		Each of the equalities holds if and only if $T\cong T_{n,5}^{\lfloor \frac{n-6}{2}\rfloor,\lceil \frac{n-6}{2}\rceil}$.
	\end{enumerate}
\end{thm}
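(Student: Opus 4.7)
My plan is to partition the space of trees on $n$ vertices by diameter and use the results already assembled to collapse the problem into a comparison between at most two candidate trees. For any tree $T$ on $n\geq 4$ vertices with $\diam(T)=d$, one has $\xi_1(T)\geq \min\{\xi_1(T'):T'\in\mathcal{T}_{n,d}\}$, so it suffices to determine the diameter-class minimizer and then minimize over $d$. Theorem \ref{diam234-min} handles $d\in\{2,3,4\}$ simultaneously and identifies $T_{n,3}^{0,n-4}$ as the minimizer; Theorem \ref{min-odd} (resp.\ Theorem \ref{min-even}) identifies the minimizer inside $\mathcal{T}_{n,d}$ for odd $d\geq 5$ (resp.\ even $d\geq 6$).

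Next I would apply Lemmas \ref{diam-red-odd} and \ref{diam-red-even} iteratively to the diameter-class minimizers. For even $d\geq 6$, Lemma \ref{diam-red-even} produces a tree in $\mathcal{T}_{n,d-1}$ (odd diameter) with strictly smaller spectral radius, so no even diameter $d\geq 6$ can realize the overall minimum. For odd $d\geq 7$, Lemma \ref{diam-red-odd} produces a tree in $\mathcal{T}_{n,d-2}$ with strictly smaller spectral radius. Combining these, the sequence of values $\min_{T\in\mathcal{T}_{n,d}}\xi_1(T)$ restricted to $d\geq 5$ is strictly decreasing as we iterate down to $d=5$, so the overall minimum over trees of diameter $\geq 5$ is attained by $T_{n,5}^{\lfloor(n-6)/2\rfloor,\lceil(n-6)/2\rceil}$. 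Thus exactly two candidates survive: $T^{(1)}:=T_{n,3}^{0,n-4}$ and $T^{(2)}:=T_{n,5}^{\lfloor(n-6)/2\rfloor,\lceil(n-6)/2\rceil}$.

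The remaining task is a direct comparison. From Lemma \ref{dstar-spec} with $a=0,\,b=n-4$, $\xi_1(T^{(1)})^2$ is the larger root of $x^2-(13n-35)x+16(n-3)=0$, which gives the bound in part (1). From Theorem \ref{spec odd} with $d=5$ (so $\Gamma(5)=25$ and $\bigl(\tfrac{d+3}{2}\bigr)^2=16$), a short expansion with $a+b=n-6$ yields $\xi_1(T^{(2)})^2$ as the larger root of $x^2-(16n-21)x+C(n)=0$, where $C(n)=64n^2-368n+465$ when $n$ is odd and $C(n)=64n^2-368n+529$ when $n$ is even; computing the discriminants recovers $\sqrt{800n-1419}$ and $5\sqrt{32n-67}$ respectively. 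It remains to show that $\xi_1(T^{(1)})<\xi_1(T^{(2)})$ iff $4\leq n\leq 15$ and $\xi_1(T^{(2)})<\xi_1(T^{(1)})$ iff $n\geq 16$.

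The crossover comparison is the main obstacle: both candidate values are expressed through nested radicals and the transition happens at a small integer boundary, so one cannot simply take leading-order asymptotics. I would handle it by comparing the two quadratics $p_1(x)=x^2-(13n-35)x+16(n-3)$ and $p_2(x)=x^2-(16n-21)x+C(n)$ at the larger root $r_1$ of $p_1$: substituting $r_1^2=(13n-35)r_1-16(n-3)$ reduces $p_2(r_1)$ to a linear expression in $r_1$, and the sign of $p_2(r_1)$ together with the fact that $p_2$ is positive beyond its larger root determines whether $r_1$ exceeds or falls below the larger root of $p_2$. Isolating the resulting radical and squaring converts the comparison into a polynomial inequality in $n$ that can be checked to change sign between $n=15$ and $n=16$, separately for each parity of $n$; the equality cases follow because both $T^{(1)}$ and $T^{(2)}$ are the unique minimizers in their respective diameter classes.
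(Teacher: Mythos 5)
This statement is not proved in the paper at all: it is quoted verbatim as Theorem 2.13 of \cite{wei2020solutions}, so there is no internal proof to compare against. Your reconstruction is nonetheless sound and follows the only natural architecture, which is visibly the one the source paper uses, since the paper imports exactly the ingredients you invoke (Theorem \ref{diam234-min} for $d\le 4$, Lemmas \ref{odd-diam-min} and \ref{even-diam-min} with Theorems \ref{min-odd} and \ref{min-even} for the within-class minimizers, and the diameter-reduction Lemmas \ref{diam-red-odd} and \ref{diam-red-even} to collapse everything with $d\ge 5$ onto $T_{n,5}^{\lfloor(n-6)/2\rfloor,\lceil(n-6)/2\rceil}$). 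Your algebra for the two candidates checks out: with $a=0$, $b=n-4$ Lemma \ref{dstar-spec} gives $\xi_1(T^{(1)})^2$ as the larger root of $x^2-(13n-35)x+16(n-3)$, and Theorem \ref{spec odd} with $d=5$, $\Gamma(5)=25$, $a+b=n-6$ gives $\xi_1(T^{(2)})^2$ as the larger root of $x^2-(16n-21)x+C(n)$ with your stated $C(n)$, whose discriminants are $800n-1419$ (odd $n$) and $25(32n-67)$ (even $n$).

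Two details in the final comparison need to be nailed down. First, from $p_2(r_1)>0$ alone you may only conclude that $r_1$ lies outside the interval between the two roots of $p_2$; to get $r_1>r_2$ you must also note that $r_1>\tfrac{16n-21}{2}$ (which holds since $r_1>13n-37$ by Lemma \ref{bound-spec} and $13n-37>8n-\tfrac{21}{2}$ for $n\ge 6$). Second, "changes sign between $n=15$ and $n=16$" is not by itself enough: the polynomial inequality you obtain after isolating and squaring the radical must be shown to have a \emph{single} sign change on the integers $n\ge 6$, e.g.\ by a monotonicity argument or by checking the finitely many cases $6\le n\le 15$ directly (a sanity check confirms $r_1\approx 158.8<r_2\approx 160.9$ at $n=15$ and $r_1\approx 171.8>r_2\approx 170.2$ at $n=16$, so the crossover is where claimed). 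With those two points supplied, the argument, including the uniqueness assertions via the strictness of the reduction lemmas and of the within-class minimizers, is complete.
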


\begin{thm}[{\cite[Theorem 3.4]{wei2022characterizing}}]\label{ordering-odd}
	If $n\geq d+2$ with odd $d\geq 5$, then all the trees among $\{T_{n,d}^{a,b}:b\geq a\geq 0 \}$ can be ordered with respect to $\xi_1$ as 
	$$\xi_1(T_{n,d}^{\lfloor \frac{n-d-1}{2}\rfloor,\lceil \frac{n-d-1}{2}\rceil}) <\xi_1(T_{n,d}^{\lfloor \frac{n-d-1}{2}\rfloor-1,\lceil \frac{n-d-1}{2}\rceil+1}) <\xi_1(T_{n,d}^{\lfloor \frac{n-d-1}{2}\rfloor-2,\lceil \frac{n-d-1}{2}\rceil+2}) < \hdots < \xi_1(T_{n,d}^{0,n-d-1}).$$
\end{thm}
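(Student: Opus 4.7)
The plan is to reduce the entire ordering to an elementary monotonicity property of the larger root of a single quadratic in $y=\rho^{2}$, whose coefficients depend on $(a,b)$ only through $a+b$ and $ab$.

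By Theorem~\ref{spec odd}, with $k:=(d+3)/2$, the spectral radius $\rho=\xi_{1}(T_{n,d}^{a,b})$ is the largest root of the biquadratic
$$\bigl(\rho^{2}-\Gamma(d)-k^{2}b\bigr)\bigl(\rho^{2}-\Gamma(d)-k^{2}a\bigr)-d^{2}\rho^{2}=0.$$
Setting $y=\rho^{2}$, $s=a+b=n-d-1$, and $p=ab$, expansion gives the quadratic
$$y^{2}-\bigl(2\Gamma(d)+k^{2}s+d^{2}\bigr)\,y+\bigl(\Gamma(d)^{2}+k^{2}s\,\Gamma(d)+k^{4}p\bigr)=0. \qquad(\ast)$$
Since the original polynomial in $\rho$ is biquadratic, its four real roots occur as $\pm\sqrt{y_{+}},\pm\sqrt{y_{-}}$; combined with Theorem~\ref{Inertia-odd}, which guarantees exactly two positive and two negative $\mathcal{E}$-eigenvalues, one concludes that both roots of $(\ast)$ are positive and that $\xi_{1}(T_{n,d}^{a,b})^{2}=y_{+}$, the larger root of $(\ast)$.

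Now comes the key observation. Along the entire family $\{T_{n,d}^{a,b}:b\ge a\ge 0,\ a+b=s\}$ the sum of roots $S:=2\Gamma(d)+k^{2}s+d^{2}$ of $(\ast)$ is a fixed positive constant, while the product of roots $P:=\Gamma(d)^{2}+k^{2}s\,\Gamma(d)+k^{4}p$ is a strictly increasing affine function of $p$. From $y_{+}=\tfrac{1}{2}\bigl(S+\sqrt{S^{2}-4P}\bigr)$, we see that $y_{+}$ is strictly decreasing in $P$, hence strictly decreasing in $p$. Along the sequence $a=\lfloor s/2\rfloor,\lfloor s/2\rfloor-1,\ldots,0$ with $b=s-a$, the product $p=a(s-a)$ is strictly decreasing, since it is a downward parabola in $a$ with vertex at $s/2$ and we are moving monotonically away from that vertex. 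Therefore $y_{+}$ is strictly increasing along this sequence, and so is $\xi_{1}(T_{n,d}^{a,b})=\sqrt{y_{+}}$, which is the claimed chain of strict inequalities.

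I do not anticipate a serious obstacle: the argument is essentially bookkeeping after the biquadratic reduction. The only point where care is warranted is the step in which $y_{+}$ is identified with $\xi_{1}^{2}$; here one must invoke the sign information that both roots of $(\ast)$ are positive (so that the larger root of $(\ast)$ corresponds to the largest root of the quartic in $\rho$), and this is supplied cleanly by Theorem~\ref{Inertia-odd} together with the biquadratic structure, avoiding any need to estimate $S^{2}-4P$ by hand.
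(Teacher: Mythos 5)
This statement is not proved in the paper at all: it is imported verbatim from Wei et al.\ \cite{wei2022characterizing} (their Theorem~3.4) as a preliminary, so there is no in-paper argument to compare against. Your proposal is a correct and self-contained derivation from the other quoted preliminary, Theorem~\ref{spec odd}, and the reduction is sound: with $k=(d+3)/2$, $s=a+b$, $p=ab$, the quartic becomes $y^{2}-Sy+P=0$ in $y=\rho^{2}$ with $S=2\Gamma(d)+k^{2}s+d^{2}$ fixed along the family and $P=\Gamma(d)^{2}+k^{2}s\,\Gamma(d)+k^{4}p$ increasing in $p$; since $y_{+}=\tfrac12\bigl(S+\sqrt{S^{2}-4P}\bigr)$ is strictly decreasing in $P$ and $p=a(s-a)$ strictly decreases as $a$ steps down from $\lfloor s/2\rfloor$ to $0$ (the increment is $s-2a+1\geq 1$), the chain of strict inequalities follows. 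The one point you gloss over is the phrase ``its four real roots'': Theorem~\ref{spec odd} as quoted only says $\xi_{1}$ is the largest root of the quartic, and Theorem~\ref{Inertia-odd} speaks of eigenvalues of $\mathcal{E}(T)$, not of roots of this particular polynomial, so the realness and positivity of both roots of $(\ast)$ is not literally supplied by what you cite. This is easily repaired without any appeal to inertia: a direct computation gives $S^{2}-4P=k^{4}(a-b)^{2}+d^{2}\bigl(d^{2}+4\Gamma(d)+2k^{2}s\bigr)>0$, and since $S>0$ and $P>0$ both roots are real and positive, so $\xi_{1}^{2}=y_{+}$. With that one line added, your argument is complete; it is also essentially the same mechanism (monotonicity of the largest root of the quotient characteristic polynomial in the product $ab$) that underlies the cited source.
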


\begin{thm}[{\cite[Theorem 3.6]{wei2022characterizing}}]\label{ordering-even}
	Assume that $n\geq d+2$ with even $d\geq 6$.
	\begin{enumerate}
		\item If $n$ is odd, then all the trees among $\{T_{n,d}^{a,b,c}:c\geq a\geq 0,b\geq 0 \}$ can be ordered with respect to $\xi_1$ as 
		\begin{multline*}
			\xi_1(T_{n,d}^{\frac{n-d-1}{2},0,\frac{n-d-1}{2}})< \xi_1(T_{n,d}^{\frac{n-d-3}{2},1,\frac{n-d-1}{2}})< \xi_1(T_{n,d}^{\frac{n-d-3}{2},2,\frac{n-d-3}{2}})< \xi_1(T_{n,d}^{\frac{n-d-5}{2},3,\frac{n-d-3}{2}})\\ < \xi_1(T_{n,d}^{\frac{n-d-5}{2},4,\frac{n-d-5}{2}})<  \hdots < \xi_1(T_{n,d}^{0,n-d-2,1})< \xi_1(T_{n,d}^{0,n-d-1,0});
		\end{multline*}
		\item If $n$ is even, then all the trees among $\{T_{n,d}^{a,b,c}:c\geq a\geq 0,b\geq 0 \}$ can be ordered with respect to $\xi_1$ as 
		\begin{multline*}
			\xi_1(T_{n,d}^{\frac{n-d-2}{2},0,\frac{n-d}{2}})< \xi_1(T_{n,d}^{\frac{n-d-2}{2},1,\frac{n-d-2}{2}})< \xi_1(T_{n,d}^{\frac{n-d-4}{2},2,\frac{n-d-2}{2}})< \xi_1(T_{n,d}^{\frac{n-d-4}{2},3,\frac{n-d-4}{2}})\\ < \xi_1(T_{n,d}^{\frac{n-d-6}{2},4,\frac{n-d-4}{2}})<  \hdots < \xi_1(T_{n,d}^{0,n-d-2,1})< \xi_1(T_{n,d}^{0,n-d-1,0}).
		\end{multline*}
	\end{enumerate} 
\end{thm}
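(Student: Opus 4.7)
The plan is to use the explicit characteristic equation from Theorem~\ref{spec even}, combined with an isomorphism-based symmetry, to reduce the entire ordering to a single monotonicity inequality, which is then verified by evaluating one characteristic polynomial at the spectral radius of the other tree. Write $f_{a,b,c}(\rho) = AB - C^2$ for the $2 \times 2$ determinant from Theorem~\ref{spec even}, expanded as a polynomial in $\rho$, where
\[
A = \rho^2 - \Theta(d) - \bigl(\tfrac{d+2}{2}\bigr)^2 b - \bigl(\tfrac{d+4}{2}\bigr)^2 c, \quad B = \rho^2 - \Theta(d) - \bigl(\tfrac{d+2}{2}\bigr)^2 b - \bigl(\tfrac{d+4}{2}\bigr)^2 a, \quad C = d\rho + \bigl(\tfrac{d}{2}\bigr)^2 + \bigl(\tfrac{d+2}{2}\bigr)^2 b.
\]
Since $f_{a,b,c}$ has leading term $\rho^4$, one has $f_{a,b,c}(\rho) > 0$ for every $\rho > \xi_1(T_{n,d}^{a,b,c})$, and $\xi_1(T_{n,d}^{a,b,c})$ is the largest positive root. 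Moreover $\xi_1^2 \geq \max\{\Theta(d) + q^2 b + p^2 a,\, \Theta(d) + q^2 b + p^2 c\}$, so $A, B > 0$ at $\rho = \xi_1$. The path-reversing isomorphism $T_{n,d}^{a,b,c} \cong T_{n,d}^{c,b,a}$ makes the spectral radius invariant under $a \leftrightarrow c$.

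Each consecutive pair in either chain is obtained, up to this symmetry, by moving a single pendant from an outer vertex carrying the larger pendant count to the central vertex, so the full ordering reduces to the claim that if $c \geq a+1$, $b \geq 0$, and $a+b+c = n-d-1$, then $\xi_1(T_{n,d}^{a,b,c}) < \xi_1(T_{n,d}^{a,b+1,c-1})$. Writing $p = (d+4)/2$, $q = (d+2)/2$, so that $p^2 - q^2 = d + 3$, a direct expansion yields
\[
\Delta(\rho) := f_{a,b+1,c-1}(\rho) - f_{a,b,c}(\rho) = (d+3)(B - q^2) - q^2\bigl(A + 2C + q^2\bigr).
\]
At $\rho = \xi := \xi_1(T_{n,d}^{a,b,c})$, the identity $AB = C^2$ eliminates $A$, reducing $\Delta(\xi)$ to an expression in $B, C$, and $\rho$ only. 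The facts $B - A = p^2(c-a) \geq p^2$ and an elementary lower bound $\xi \geq \xi_1(\mathcal{E}(P_{d+1}))$, which follows by applying the Interlacing Theorem to the principal submatrix of $\mathcal{E}(T_{n,d}^{a,b,c})$ indexed by the diametrical path, together force $\Delta(\xi) < 0$. Hence $f_{a,b+1,c-1}(\xi) < 0$; positivity of $f_{a,b+1,c-1}$ above its largest root then yields $\xi < \xi_1(T_{n,d}^{a,b+1,c-1})$. Chaining this inequality along either sequence, invoking $a \leftrightarrow c$ symmetry whenever the hypothesis $c \geq a+1$ needs to be restored, recovers both chains; the parity of $n$ affects only the initial triple.

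The main obstacle will be the clean sign analysis of $\Delta(\xi)$: although the expression is manifestly polynomial in $d, a, b, c$, the positive term $(d+3)B$ and the negative term $q^2(A + 2C + q^2)$ are both of large order, so establishing strict negativity requires an effective lower bound on $\xi$, roughly of order $d^{3/2}$, to dominate the mixed terms arising from $AB = C^2$. A cleaner alternative I would pursue in parallel is to identify Theorem~\ref{spec even} as the characteristic equation of a quotient matrix $Q$ coming from an equitable partition of $\mathcal{E}(T_{n,d}^{a,b,c})$; by Theorem~\ref{quo-spec}, $\xi_1$ is a Perron-type eigenvalue of $Q$, and a direct Perron--Frobenius monotonicity argument, after a diagonal rescaling making the relevant off-diagonal entries nonnegative, should yield the Key Lemma without explicit sign gymnastics. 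Either route delivers the Key Lemma, after which the ordering follows by induction in both parity cases.
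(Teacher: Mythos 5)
This statement is not proved in the paper at all: it is Theorem 3.6 of \cite{wei2022characterizing}, quoted verbatim as a preliminary, so there is no in-paper argument to compare against and your proposal must stand on its own. It does not. The Key Lemma you reduce everything to --- ``if $c\geq a+1$, $b\geq 0$ and $a+b+c=n-d-1$ then $\xi_1(T_{n,d}^{a,b,c})<\xi_1(T_{n,d}^{a,b+1,c-1})$'' --- is false. Take $d=6$ (so $\Theta=50$, $q^2=16$, $p^2=25$) and $(a,b,c)=(0,0,50)$, $n=57$: solving the determinant equation of Theorem~\ref{spec even} gives $\xi_1(T_{57,6}^{0,0,50})\approx 36.61$ while $\xi_1(T_{57,6}^{0,1,49})\approx 36.58$, so the inequality reverses. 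The reason is visible in your own formula $\Delta(\xi)=(d+3)(B-q^2)-q^2(A+2C+q^2)$: since $B-A=p^2(c-a)$, a \emph{large} gap $c-a$ inflates the positive term $(d+3)B$ relative to $q^2A$ (at $\xi$ one has $A=C^2/B$, which is small when $B$ is large), and $\Delta(\xi)$ becomes positive. So the two ``facts'' you claim force $\Delta(\xi)<0$ --- namely $B-A\geq p^2$ and a lower bound on $\xi$ --- push in the \emph{wrong direction}; what is actually needed is the upper bound $c-a\leq 1$, which does hold for every consecutive pair in the two chains. With $B\leq A+p^2$ one gets $(d+3)(B-q^2)\leq (d+3)A+(d+3)^2<q^2A+q^4\leq q^2(A+2C+q^2)$, using $q^2=(d+2)^2/4>d+3$ for $d\geq 3$ and $A>0$ at $\rho=\xi$ (which follows since $AB-C^2=-C^2<0$ at $\rho^2=\Theta(d)+q^2b+p^2c$). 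That corrected computation is short and clean, but it is not the argument you wrote.

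Two further, smaller problems. First, your hypothesis $c\geq a+1$ excludes the steps of the chain that start from $a=c$ (every other link, e.g.\ $(m-1,2,m-1)\to(m-2,3,m-1)$ in the odd case), and the $a\leftrightarrow c$ symmetry cannot restore $c\geq a+1$ when $a=c$; the lemma must be stated and proved for $c\in\{a,a+1\}$ with $c\geq 1$. Second, the ``cleaner alternative'' via Perron--Frobenius monotonicity of a quotient matrix does not go through as described: the $2\times 2$ array in Theorem~\ref{spec even} is $\rho$-dependent (a quadratic eigenvalue problem, not a fixed matrix), and the genuine quotient matrices of $T_{n,d}^{a,b,c}$ and $T_{n,d}^{a,b+1,c-1}$ correspond to partitions with different cell sizes and are not entrywise comparable in a single direction, so no diagonal rescaling yields a dominance argument. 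The overall strategy --- reduce to a one-step move and test the sign of one characteristic polynomial at the other tree's spectral radius --- is sound and essentially standard for such orderings, but as written the central inequality is justified by a false claim.
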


\section{Second largest $\mathcal{E}$-eigenvalue of trees}
In this section, we characterize the trees with minimum second largest $\mathcal{E}$-eigenvalue among all trees on $n$ vertices other than the star. First, let us determine the unique tree with minimum second largest $\mathcal{E}$-eigenvalue among the trees with diameter $3$.

\begin{lem}\label{second-diam3}
Let $T$ be a tree on $n\geq 4$ vertices with diameter $3$. Then,
$$\xi_2(T)\geq \sqrt{\frac{13n-35-\sqrt{169n^2-974n+1417}}{2}}$$
with equality if and only if $T\cong T_{n,3}^{0,n-4}$.
\end{lem}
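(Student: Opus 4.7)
The plan is to exploit the explicit spectrum of double stars provided by Lemma~\ref{dstar-spec}. Every tree on $n\geq 4$ vertices with diameter $3$ is a double star $T_{n,3}^{a,b}$ with $a+b=n-4$ and $b\geq a\geq 0$, so the claim reduces to minimizing $\xi_2(T_{n,3}^{a,b})^2 = (\alpha-\sqrt{\beta})/2$, where $\alpha=9ab+13n-35$ and $\beta=\alpha^2-64(a+1)(b+1)$, over this one-parameter family. A direct substitution $(a,b)=(0,n-4)$ gives $\alpha=13n-35$ and $\beta=169n^2-974n+1417$, reproducing the claimed right-hand side, so it suffices to show
\[\alpha-\sqrt{\beta}\;\geq\;(13n-35)-\sqrt{169n^2-974n+1417},\]
with equality iff $ab=0$.

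The key structural observation is that, under the constraint $a+b=n-4$, the quantities $\alpha$, $\beta$ and $(a+1)(b+1)$ all depend on $(a,b)$ only through the product $t=ab$, so the inequality becomes a one-variable statement on $t$, and the equality case at $t=0$ corresponds precisely to $(a,b)=(0,n-4)$ (using $b\geq a\geq 0$). Using $\alpha-(13n-35)=9ab$, I would rewrite the desired inequality as
\[\sqrt{169n^2-974n+1417}\;+\;9ab\;\geq\;\sqrt{\beta},\]
which is immediate for $ab=0$. For $ab>0$ both sides are positive, so I would square; using $\beta=\alpha^2-64(ab+n-3)$ and expanding, the $81a^2b^2$ contributions and the constant $(13n-35)^2-64(n-3)=169n^2-974n+1417$ cancel, and a factor $2ab>0$ can be pulled out of what remains, collapsing the inequality to $9\sqrt{169n^2-974n+1417}\geq 117n-347$. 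A second squaring (legitimate because $117n-347>0$ for $n\geq 4$) reduces the whole problem to the elementary linear bound $2304n-5632\geq 0$, which holds for every $n\geq 3$.

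Reversing the squarings under the sign conditions verified along the way, equality propagates back to $ab=0$, which together with $b\geq a\geq 0$ and $a+b=n-4$ forces $T\cong T_{n,3}^{0,n-4}$, establishing uniqueness. The main obstacle is essentially just the algebraic bookkeeping in the two successive squarings; once the reduction to the single parameter $t=ab$ is recognised, no conceptual difficulty remains, but one has to monitor signs carefully to keep both squarings reversible and to confirm that the threshold $n\geq 3$ emerging at the end is compatible with the hypothesis $n\geq 4$.
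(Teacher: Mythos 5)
Your proof is correct, and it takes a genuinely different (though closely related) route from the paper's. Both arguments begin identically: reduce to $T\cong T_{n,3}^{a,b}$, invoke Lemma~\ref{dstar-spec}, and observe that under $a+b=n-4$ everything is governed by a single parameter. From there the paper treats $f(a)=\alpha-\sqrt{\beta}$ as a function of the real variable $a$, differentiates, and proves monotonicity on $[0,\lfloor\frac{n-4}{2}\rfloor]$ via the completion $\beta=\bigl(\alpha-\tfrac{32}{9}\bigr)^2+\tfrac{2304n-5632}{81}$; you instead compare directly with the value at $(0,n-4)$ and eliminate the radicals by two squarings, arriving at the linear bound $2304n-5632\ge 0$. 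I checked your algebra: with $C=169n^2-974n+1417=(13n-35)^2-64(n-3)$, squaring $\sqrt{C}+9ab\ge\sqrt{\beta}$ does cancel the $81a^2b^2$ and constant terms, leaves $18ab\sqrt{C}\ge 18ab(13n-35)-64ab$, and dividing by $2ab>0$ gives $9\sqrt{C}\ge 117n-347$, whose square reduces to $2304n-5632\ge 0$ -- strictly positive for $n\ge 3$, so the chain is strict whenever $ab>0$ and the equality case is exactly $ab=0$, i.e.\ $T\cong T_{n,3}^{0,n-4}$. It is a pleasant coincidence (or rather, the same underlying computation surfacing twice) that the quantity $2304n-5632$ appears in both proofs. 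What each approach buys: yours is calculus-free and both squarings are reversible under the sign conditions you verify, so it is self-contained; the paper's monotonicity argument proves more, namely that $\xi_2(T_{n,3}^{a,b})$ is increasing in $a$, which is what yields the full ordering in Corollary~\ref{diam3-sec-ordering} -- your direct comparison with the minimizer does not recover that ordering without extra work.
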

\begin{proof}
Since $T$ is a tree of diameter $3$, therefore $T\cong T_{n,3}^{a,b}$. Therefore, by Lemma \ref{dstar-spec}, it follows that $$\xi_2(T)=\xi_2(T_{n,3}^{a,b})= \sqrt{\frac{(9ab+13a+13b+17)-\sqrt{(9ab+13a+13b+17)^2-64(a+1)(b+1)}}{2}}.$$

Since $a+b=n-4$ $(0\leq a \leq \lfloor \frac{n-4}{2}\rfloor)$, therefore 

$$\xi_2(T)=\sqrt{\frac{9a(n-4-a)+13n-35-\sqrt{(9a(n-4-a)+13n-35)^2-64(a(n-4-a)+n-3)}}{2}}.$$

If $a=0$, then $T\cong T_{n,3}^{0,n-4}$ and  $\xi_2(T_{n,3}^{0,n-4})=\sqrt{\frac{13n-35-\sqrt{169n^2-974n+1417}}{2}}$. Now, consider the function
 
 $$f(x)=9x(n-4-x)+13n-35-\sqrt{(9x(n-4-x)+13n-35)^2-64(x(n-4-x)+n-3)}.$$
 Therefore,
\begin{equation*}
f^{\prime}(x)= 9(n-4-2x)\bigg(1-\frac{9x(n-4-x)+(13n-35)-\frac{32}{9}}{\sqrt{(9x(n-4-x)+13n-35)^2-64(x(n-4-x)+n-3)}}\bigg). 
\end{equation*}

Since $n\geq 4$, therefore 
\begin{eqnarray*}
& &\sqrt{(9x(n-4-x)+13n-35)^2-64(x(n-4-x)+n-3)} \\
&=& \sqrt{\Big(9x(n-4-x)+13n-35-\frac{32}{9}\Big)^2+\frac{2304n-5632}{81}}\\
&>& 9x(n-4-x)+13n-35-\frac{32}{9}.
\end{eqnarray*}

Hence, $$\frac{9x(n-4-x)+(13n-35)-\frac{32}{9}}{\sqrt{(9x(n-4-x)+13n-35)^2-64(x(n-4-x)+n-3)}}<1.$$ 

Thus, $f^{\prime}(x)\geq 0$ for $0\leq x \leq \lfloor \frac{n-4}{2}\rfloor$, that is, $f(x)$ is an increasing function of $x$ for $0\leq x \leq \lfloor \frac{n-4}{2}\rfloor$. Hence,
\begin{eqnarray*}
\xi_2(T)=\sqrt{\frac{f(a)}{2}}\geq \sqrt{\frac{f(0)}{2}} &=&
\sqrt{\frac{13n-35-\sqrt{(13n-35)^2-64(n-3)}}{2}}\\
&=& \sqrt{\frac{13n-35-\sqrt{169n^2-974n+1417}}{2}}
\end{eqnarray*}
and the equality holds if and only if $a=0$, that is, $T\cong T_{n,3}^{0,n-4}$.
\end{proof}

From the proof of Lemma \ref{second-diam3}, we can order the trees of diameter $3$ according to their second largest $\mathcal{E}$-eigenvalues.
\begin{cor}\label{diam3-sec-ordering}
Let $T_{n,3}^{a,b}$ be in $\mathcal{T}_{n,3}$ with $b\geq a\geq 0$. Then all the trees in $T_{n,3}^{a,b}$ can be ordered with respect to their second largest $\mathcal{E}$-eigenvalues as follows:  $$\xi_2(T_{n,3}^{0,n-4})<\xi_2(T_{n,3}^{1,n-5})<\hdots<\xi_2(T_{n,3}^{\lfloor \frac{n-4}{2}\rfloor,\lceil \frac{n-4}{2}\rceil}).$$
\end{cor}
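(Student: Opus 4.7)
The plan is to observe that this corollary is an immediate byproduct of the monotonicity argument already executed in the proof of Lemma \ref{second-diam3}. There, the function
$$f(x) = 9x(n-4-x) + 13n - 35 - \sqrt{(9x(n-4-x)+13n-35)^2 - 64(x(n-4-x)+n-3)}$$
was introduced and it was shown that $\xi_2(T_{n,3}^{a,n-4-a}) = \sqrt{f(a)/2}$ for $0 \le a \le \lfloor (n-4)/2 \rfloor$. The derivative was factored as $f'(x) = 9(n-4-2x) \cdot g(x)$, where $g(x)$ is strictly positive on the relevant range (this is exactly the ``$<1$'' step in the earlier proof). So it suffices to quote this factorization and conclude with monotonicity.

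First, I would note that $9(n-4-2x) > 0$ whenever $x < (n-4)/2$, hence $f'(x) > 0$ strictly on the open interval $0 \le x < (n-4)/2$. For any two integer values $a < a' \le \lfloor (n-4)/2 \rfloor$, the open interval $(a,a')$ is contained in $[0,(n-4)/2)$ except possibly at its right endpoint when $n$ is even and $a' = (n-4)/2$; in either case, $f'$ is positive on a subinterval of positive length, so $f(a) < f(a')$ by the mean value theorem. Applying this to consecutive integer pairs $a, a+1$ for $a = 0, 1, \dots, \lfloor (n-4)/2 \rfloor - 1$ and using that $t \mapsto \sqrt{t/2}$ is strictly increasing, I obtain the chain
$$\xi_2(T_{n,3}^{0,n-4}) < \xi_2(T_{n,3}^{1,n-5}) < \cdots < \xi_2(T_{n,3}^{\lfloor (n-4)/2\rfloor,\lceil (n-4)/2\rceil}).$$

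There is no substantive obstacle: the entire computation has already been carried out in Lemma \ref{second-diam3}, and the only bookkeeping point is verifying strict (rather than weak) monotonicity at the rightmost consecutive pair when $n$ is even, which is handled by the observation above that $f'$ is strictly positive on the open interval to the left of $(n-4)/2$. The write-up should therefore be short, essentially a one-paragraph citation of the monotonicity of $f$ established in the preceding lemma followed by the squeeze of consecutive integer arguments.
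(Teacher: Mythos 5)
Your proposal is correct and matches the paper's approach exactly: the paper gives no separate proof of this corollary, deriving it directly from the monotonicity of $f$ established in the proof of Lemma \ref{second-diam3}, just as you do. Your extra care about strictness at the right endpoint when $n$ is even (using that $f'>0$ on the open interval to the left of $(n-4)/2$) is a valid and welcome refinement of the paper's ``$f'(x)\geq 0$'' statement, which is what is actually needed for the strict inequalities in the chain.
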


Next, we obtain an upper bound for the second largest $\mathcal{E}$-eigenvalue of $T_{n,3}^{a,b}$.

\begin{lem}\label{bound-second}
For $n\geq 4$, $\xi_2(T_{n,3}^{0,n-4})< \sqrt{2}$.
\end{lem}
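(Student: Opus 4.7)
The plan is to prove this by direct algebraic manipulation using the closed form for $\xi_2(T_{n,3}^{0,n-4})$ already established in Lemma \ref{second-diam3}.

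First, I would substitute the explicit expression
$$\xi_2(T_{n,3}^{0,n-4}) \;=\; \sqrt{\frac{13n-35-\sqrt{169n^2-974n+1417}}{2}},$$
so that proving $\xi_2(T_{n,3}^{0,n-4})<\sqrt{2}$ is equivalent to showing
$$13n-35-\sqrt{169n^2-974n+1417}\;<\;4,$$
i.e.\ $13n-39<\sqrt{169n^2-974n+1417}$.

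Next, I would observe that for $n\geq 4$ the left-hand side satisfies $13n-39\geq 13>0$, so both sides of the inequality are positive and squaring preserves it. Squaring yields
$$(13n-39)^2 \;=\; 169n^2-1014n+1521 \;<\; 169n^2-974n+1417,$$
which simplifies to $40n>104$, i.e.\ $n>2.6$. This holds for every $n\geq 4$, completing the argument.

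There is essentially no obstacle here; the only care needed is to verify the positivity of $13n-39$ before squaring (so the implication runs in the correct direction), which is immediate from $n\geq 4$. A brief remark could be added that the bound $\sqrt{2}$ is not tight, since the same computation shows $\xi_2(T_{n,3}^{0,n-4})^2$ is increasing in $n$ with limit $16/13$ as $n\to\infty$, but that observation is not required for the statement.
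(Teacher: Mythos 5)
Your proof is correct and takes essentially the same route as the paper: both start from the closed form for $\xi_2(T_{n,3}^{0,n-4})$ and reduce the claim to an elementary inequality, the paper by bounding $\sqrt{(13n-37)^2-12(n-4)}$ below by $13n-38$, you by squaring the equivalent inequality $13n-39<\sqrt{169n^2-974n+1417}$ (with the positivity check correctly noted). Your version is if anything slightly cleaner, since it runs through equivalences and exposes the exact threshold $n>2.6$.
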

\begin{proof}
Note that,
$\sqrt{(13n-37)^2-12(n-4)}>\sqrt{(13n-37)^2-(13n-37)}=\sqrt{(13n-37)(13n-38)}$ $> (13n-38)$. Thus,
\begin{eqnarray*}
\xi_2(T_{n,3}^{0,n-4})&=& \sqrt{\frac{13n-35-\sqrt{169n^2-974n+1417}}{2}}\\
&=& \sqrt{\frac{13n-35-\sqrt{(13n-37)^2-12(n-4)}}{2}}\\
&<& \sqrt{\frac{13n-34-(13n-38)}{2}}\\
&=& \sqrt{2}.
\end{eqnarray*}
\end{proof}

In the following theorem, we show that $T_{n,3}^{0,n-4}$ is the unique tree with the minimum second largest $\mathcal{E}$-eigenvalue among all trees on $n$ vertices other than the star.
\begin{thm}\label{second-larg}
Let $T$ be a tree on $n\geq 4$ vertices other than the star. Then, 
$$\xi_2(T)\geq \xi_2(T_{n,3}^{0,n-4}) $$
with equality if and only if $T\cong T_{n,3}^{0,n-4}$.
\end{thm}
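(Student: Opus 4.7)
The plan is to split on $\diam(T)$. Since $T$ is a non-star tree on $n \geq 4$ vertices, $\diam(T) \geq 3$. The case $\diam(T) = 3$ is handled immediately by Lemma \ref{second-diam3}, which also delivers the characterization of equality. The substance of the proof is therefore the case $\diam(T) = d \geq 4$, where I will establish the strict inequality $\xi_2(T) > \sqrt{2}$; combined with Lemma \ref{bound-second}, which gives $\xi_2(T_{n,3}^{0,n-4}) < \sqrt{2}$, this at once forces $\xi_2(T) > \xi_2(T_{n,3}^{0,n-4})$ and rules out equality whenever the diameter is at least $4$.

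To prove $\xi_2(T) > \sqrt{2}$ when $d \geq 4$, I would exhibit a $4 \times 4$ principal submatrix of $\mathcal{E}(T)$ whose second-largest eigenvalue strictly exceeds $\sqrt{2}$ and then apply the Interlacing Theorem. Fix a diametrical path $v_0 v_1 \cdots v_d$ in $T$; its endpoints are leaves, so $e(v_0) = e(v_d) = d$. The key combinatorial claim is $e(v_1) = d - 1$: we have $e(v_1) \geq d(v_1, v_d) = d - 1$ automatically, while if some vertex $u$ satisfied $d(u, v_1) = d$, then placing $u$ in the two components of $T - v_0 v_1$ in turn forces either $d(u, v_0) = d + 1$ or $d(u, v_d) = 2d - 1$, both contradicting $\diam(T) = d$. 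By symmetry $e(v_{d-1}) = d - 1$ as well.

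With these eccentricities, checking the pairs directly shows that the principal submatrix $M$ of $\mathcal{E}(T)$ indexed by $\{v_0, v_1, v_{d-1}, v_d\}$ is
\[
M = \begin{pmatrix} 0 & 0 & d-1 & d \\ 0 & 0 & 0 & d-1 \\ d-1 & 0 & 0 & 0 \\ d & d-1 & 0 & 0 \end{pmatrix}.
\]
A routine cofactor expansion gives the biquadratic characteristic polynomial $\lambda^{4} - (2(d-1)^{2} + d^{2})\lambda^{2} + (d-1)^{4}$, and its discriminant factors as $d^{2}\bigl(4(d-1)^{2} + d^{2}\bigr)$, so the two positive eigenvalues of $M$ are $\mu_{1} = \tfrac{1}{2}\bigl(\sqrt{d^{2}+4(d-1)^{2}} + d\bigr)$ and $\mu_{2} = \tfrac{1}{2}\bigl(\sqrt{d^{2}+4(d-1)^{2}} - d\bigr)$.

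The last step is to verify $\mu_{2} > \sqrt{2}$ for every integer $d \geq 4$. Squaring and simplifying reduces this to $\bigl((d-1)^{2} - 2\bigr)^{2} > 2d^{2}$, equivalently $d^{2} - (2 + \sqrt{2})\,d - 1 > 0$; the positive root of this quadratic lies strictly below $4$, so the inequality holds for every $d \geq 4$. Interlacing then yields $\xi_{2}(T) \geq \mu_{2} > \sqrt{2} > \xi_{2}(T_{n,3}^{0,n-4})$, finishing the case $\diam(T) \geq 4$ and hence the theorem. The step I expect to need most care is the tree-distance argument pinning down $e(v_{1}) = e(v_{d-1}) = d - 1$; once that is in hand, the submatrix computation and the final polynomial inequality are purely mechanical.
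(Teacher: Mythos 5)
Your proposal is correct and follows essentially the same route as the paper: split on the diameter, handle $\diam(T)=3$ via Lemma \ref{second-diam3}, and for $d\geq 4$ interlace with the principal submatrix of $\mathcal{E}(T)$ on $\{v_0,v_1,v_{d-1},v_d\}$ to get $\xi_2(T)\geq \frac{\sqrt{d^2+4(d-1)^2}-d}{2}>\sqrt{2}>\xi_2(T_{n,3}^{0,n-4})$. The only difference is cosmetic: you explicitly justify $e(v_1)=e(v_{d-1})=d-1$ (which the paper leaves implicit) and phrase the final inequality as $d^2-(2+\sqrt{2})d-1>0$ rather than the paper's equivalent $d^3(d-4)+4d+1\geq 0$.
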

\begin{proof}
If $T$ is a tree with $\diam(T)=3$, then the proof follows from Theorem \ref{second-diam3}. Let $T$ be a tree with $\diam(T)\geq 4$, and let $P_{d+1}=v_0v_1\hdots v_d$ be a diametrical path in $T$. Then, the principal submatrix of $\mathcal{E}(T)$ indexed by $\{v_0,v_1,v_{d-1},v_d\}$ is 
\[B = \left[ {\begin{array}{cccc}
0 & 0 & d-1 & d\\
0 & 0 & 0 & d-1\\
d-1 & 0 & 0 & 0 \\
d & d-1 & 0 & 0 \\
\end{array} } \right].\]
 By a direct calculation, the second largest eigenvalue of $B$ is given by $\lambda_2=\frac{\sqrt{d^2+4(d-1)^2}-d}{2}$. Therefore, by Cauchy Interlacing Theorem, we have $\xi_2(T)\geq \lambda_2=\frac{\sqrt{d^2+4(d-1)^2}-d}{2}$. Now, we claim that $\lambda_2>\sqrt{2}$. Suppose that  $\lambda_2=\frac{\sqrt{d^2+4(d-1)^2}-d}{2}<\sqrt{2}$. Then $d^3(d-4)+4d+1<0$, which is a contradiction as $d\geq 4$. Therefore, $\xi_2(T)\geq \lambda_2 > \sqrt{2}$. Now, the proof follows from Lemma \ref{bound-second}.
\end{proof}

\section{Minimum $\mathcal{E}$-energy of trees}
In this section, we consider the extremal problem for the $\mathcal{E}$-energy of trees. We show that the tree $T_{n,3}^{0,n-4}$ is the unique tree with minimum $\mathcal{E}$-energy among all trees on $n\geq 5$ vertices. First, we compute the $\mathcal{E}$-energy of $T_{n,3}^{0,n-4}$ and then compare it with the $\mathcal{E}$-energy of $T_{n,5}^{a,b},T_{n,6}^{a,b,a},T_{n,6}^{a,b,a+1}$ and $T_{n,7}^{a,b}$, respectively.

\begin{lem}\label{energy-bound}
For $n\geq 5$, the $\mathcal{E}$-energy of the tree $T_{n,3}^{0,n-4}$ is given by $$E_{\mathcal{E}}(T_{n,3}^{0,n-4})=2\sqrt{13n-35+8\sqrt{n-3}}.$$
\end{lem}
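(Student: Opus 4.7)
The plan is to apply Lemma \ref{dstar-spec} directly with $a=0$ and $b=n-4$, then simplify the resulting sum of four positive square roots by squaring. Setting $\alpha = 9ab+13a+13b+17$ and $\beta = \alpha^2 - 64(a+1)(b+1)$ and specializing, I would first compute $\alpha = 13n-35$ and observe that $\alpha^2 - \beta = 64(a+1)(b+1) = 64(n-3)$, which is the key simplification that makes the expression collapse to a nice closed form.

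Next, since $\tr(\mathcal{E}(T_{n,3}^{0,n-4})) = 0$ and the spectrum given by Lemma \ref{dstar-spec} is symmetric about the origin, the $\mathcal{E}$-energy equals
\[
E_{\mathcal{E}}(T_{n,3}^{0,n-4}) = 2\sqrt{\tfrac{\alpha + \sqrt{\beta}}{2}} + 2\sqrt{\tfrac{\alpha - \sqrt{\beta}}{2}}.
\]
I would then square both sides and use $(x+y)^2 = x^2 + y^2 + 2xy$ on the two radicals. The cross term produces $2\sqrt{\alpha^2 - \beta}/2 \cdot 4 = 4\sqrt{\alpha^2-\beta}$, while the sum of squares gives $4\alpha$. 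Substituting $\alpha = 13n-35$ and $\sqrt{\alpha^2-\beta} = 8\sqrt{n-3}$ yields
\[
E_{\mathcal{E}}(T_{n,3}^{0,n-4})^2 = 4(13n-35) + 32\sqrt{n-3} = 4\bigl(13n-35+8\sqrt{n-3}\bigr),
\]
and taking the positive square root gives the claimed formula.

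There is essentially no obstacle here: the entire computation is a one-line algebraic simplification, and the only thing to be careful about is that before squaring we must know both $\sqrt{(\alpha+\sqrt{\beta})/2}$ and $\sqrt{(\alpha-\sqrt{\beta})/2}$ are real and nonnegative. Reality follows since $\beta = (13n-35)^2 - 64(n-3) \geq 0$ for $n\geq 5$ (indeed $(13n-35)^2$ clearly dominates $64(n-3)$ in this range), and nonnegativity of $\alpha-\sqrt{\beta}$ follows from $\alpha^2 - \beta = 64(n-3) \geq 0$ together with $\alpha = 13n-35 > 0$. With these positivity checks recorded, the computation is complete.
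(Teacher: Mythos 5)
Your proof is correct and follows essentially the same route as the paper: both specialize Lemma \ref{dstar-spec} to $a=0$, $b=n-4$, note that the energy is twice the sum of the two positive eigenvalues, and square the sum using $\xi_1^2+\xi_2^2=\alpha=13n-35$ and $\xi_1\xi_2=\tfrac{1}{2}\sqrt{\alpha^2-\beta}=4\sqrt{n-3}$. The only cosmetic difference is that you justify $E_{\mathcal{E}}=2(\xi_1+\xi_2)$ by the symmetry of the spectrum in Lemma \ref{dstar-spec} rather than by citing the inertia result (Theorem \ref{Inertia-odd}), which is equally valid.
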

\begin{proof}
Since $T_{n,3}^{0,n-4}$ is a tree with diameter $3$, therefore by Theorem \ref{Inertia-odd}, it has exactly two positive $\mathcal{E}$-eigenvalues. Hence, $E_{\mathcal{E}}(T_{n,3}^{0,n-4}) = 2\big(\xi_1(T_{n,3}^{0,n-4})+\xi_2(T_{n,3}^{0,n-4})\big)$. Again, by Lemma \ref{dstar-spec}, we have 
\begin{align*}
 \xi_1(T_{n,3}^{0,n-4})=& \sqrt{\frac{13n-35+\sqrt{169n^2-974n+1417}}{2}} \quad \text{and}\\ \quad  \xi_2(T_{n,3}^{0,n-4})=& \sqrt{\frac{13n-35-\sqrt{169n^2-974n+1417}}{2}}. 
\end{align*}
Therefore, $\big(\xi_1(T_{n,3}^{0,n-4})\big)^2+\big(\xi_2(T_{n,3}^{0,n-4})\big)^2=13n-35$ and $\xi_1(T_{n,3}^{0,n-4})\xi_2(T_{n,3}^{0,n-4})=4\sqrt{n-3}$. Hence, 
\begin{align*}
\Big(\xi_1(T_{n,3}^{0,n-4})+\xi_2(T_{n,3}^{0,n-4})\Big)^2=& \big(\xi_1(T_{n,3}^{0,n-4})\big)^2+\big(\xi_2(T_{n,3}^{0,n-4})\big)^2+2\xi_1(T_{n,3}^{0,n-4})\xi_2(T_{n,3}^{0,n-4})\\
=& 13n-35+8\sqrt{n-3}.
\end{align*}
Thus, $E_{\mathcal{E}}(T_{n,3}^{0,n-4}) = 2\big(\xi_1(T_{n,3}^{0,n-4})+\xi_2(T_{n,3}^{0,n-4})\big)=2\sqrt{13n-35+8\sqrt{n-3}}$. 
\end{proof}

\begin{thm}\label{diam5-ener}
For $b\geq a \geq 1$ and $a+b=n-6$, $E_{\mathcal{E}}(T_{n,5}^{a,b})>E_{\mathcal{E}}(T_{n,3}^{0,n-4})$.
\end{thm}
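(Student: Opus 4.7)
The plan is to exploit the symmetry of the $\mathcal{E}$-spectrum of odd-diameter trees and use Vieta's formulas to bound $(\xi_1+\xi_2)^2$ from below by a simple linear expression in $n$, which already exceeds $\bigl(\tfrac12 E_\mathcal{E}(T_{n,3}^{0,n-4})\bigr)^2$ without ever having to estimate the product $\xi_1\xi_2$. This bypasses any need to find the eigenvalues of $T_{n,5}^{a,b}$ in closed form.

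First I would invoke Theorem \ref{Inertia-odd} with diameter $5$, which gives inertia $(2,2,n-4)$ for $\mathcal{E}(T_{n,5}^{a,b})$. Combined with the fact that the nonzero $\mathcal{E}$-eigenvalues of a tree of odd diameter are symmetric about $0$ (visible already in the form of the determinant in Theorem \ref{spec odd}, which is a polynomial in $\rho^2$), this yields $E_\mathcal{E}(T_{n,5}^{a,b}) = 2(\xi_1+\xi_2)$ with positive $\xi_1 \geq \xi_2$. Next I would specialize Theorem \ref{spec odd} to $d = 5$, where $\Gamma(5) = 25$ and $\left(\tfrac{d+3}{2}\right)^2 = 16$. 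Expanding the $2 \times 2$ determinant and setting $t = \rho^2$ gives the quadratic
\[ t^2 - (16n-21)\,t + (25+16a)(25+16b) = 0, \]
whose two roots are exactly $\xi_1^2$ and $\xi_2^2$ (the four $\rho$-roots being $\pm\xi_1, \pm\xi_2$). By Vieta, $\xi_1^2 + \xi_2^2 = 16n-21$, and since $\xi_1,\xi_2 > 0$ one has
\[ (\xi_1+\xi_2)^2 \;>\; \xi_1^2 + \xi_2^2 \;=\; 16n-21. \]

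To finish, Lemma \ref{energy-bound} gives $\bigl(\tfrac12 E_\mathcal{E}(T_{n,3}^{0,n-4})\bigr)^2 = 13n-35 + 8\sqrt{n-3}$, so it suffices to verify $16n-21 \geq 13n-35+8\sqrt{n-3}$, equivalently $3n+14 \geq 8\sqrt{n-3}$; squaring turns this into $9n^2 + 20n + 388 \geq 0$, which is trivially true. Chaining the inequalities yields $\bigl(\tfrac12 E_\mathcal{E}(T_{n,5}^{a,b})\bigr)^2 > \bigl(\tfrac12 E_\mathcal{E}(T_{n,3}^{0,n-4})\bigr)^2$, and the conclusion follows. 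The only mildly delicate step is the passage from Theorem \ref{spec odd}, which is phrased for the spectral radius alone, to information about \emph{both} positive eigenvalues: this requires noticing that the defining polynomial is quadratic in $\rho^2$, so its four $\rho$-roots are forced to be $\pm\xi_1,\pm\xi_2$. Once that observation is made, the rest of the proof is a one-line Vieta computation plus a routine inequality.
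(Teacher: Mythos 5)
Your overall strategy is sound and your endgame is actually a little cleaner than the paper's: the paper computes an explicit equitable partition of $\mathcal{E}(T_{n,5}^{a,b})$, reads off the full nonzero spectrum from the quotient matrix, and then expands $(\xi_1+\xi_2)^2=\alpha+\sqrt{\alpha^2-\beta}$ with $\alpha=16n-21$, whereas you discard the product term via $(\xi_1+\xi_2)^2>\xi_1^2+\xi_2^2=16n-21$ and finish with the same routine comparison against $13n-35+8\sqrt{n-3}$. Your algebra checks out: specializing Theorem \ref{spec odd} to $d=5$ (with $\Gamma(5)=25$) does yield $t^2-(16n-21)t+(25+16a)(25+16b)=0$ in $t=\rho^2$, this quadratic agrees with the quartic factor the paper obtains from its quotient matrix, and the closing inequality $3n+14\geq 8\sqrt{n-3}$ is correct.

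However, the step you yourself flag as ``mildly delicate'' is a genuine gap as justified. Theorem \ref{spec odd} asserts only that $\xi_1(T_{n,d}^{a,b})$ is the \emph{largest} root of the determinantal equation; it says nothing about the other roots. The fact that the quartic is a polynomial in $\rho^2$ forces its roots to have the form $\pm r_1,\pm r_2$ with $r_1=\xi_1$, but nothing you have cited forces $r_2=\xi_2$: a priori $r_2$ need not be an $\mathcal{E}$-eigenvalue at all (the polynomial $(\rho^2-\xi_1^2)(\rho^2-c)$ is quadratic in $\rho^2$ with largest root $\xi_1$ for any $0<c<\xi_1^2$). The missing ingredient is exactly what the paper supplies in its proof: the quartic is a factor of the characteristic polynomial of the quotient matrix of an equitable partition of $\mathcal{E}(T_{n,5}^{a,b})$, so by Theorem \ref{quo-spec} all four of its roots are $\mathcal{E}$-eigenvalues, and the inertia $(2,2,n-4)$ from Theorem \ref{Inertia-odd} then forces its two positive roots to be precisely $\xi_1$ and $\xi_2$. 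Once that justification is inserted, your Vieta computation and the final inequality complete the proof.
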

\begin{proof}
First, let us calculate the $\mathcal{E}$-spectrum of $T_{n,5}^{a,b}$, where $a+b=n-6$ with $b\geq a \geq 1$. Let $P_6=v_0v_1v_2v_3v_4v_5$ be the unique diametrical path in $T_{n,5}^{a,b}$. Assume that $u_1,u_2,\hdots,u_a$ be the pendant vertices adjacent to $v_2$ and $w_1,w_2,\hdots,w_b$ be the pendant vertices adjacent to $v_3$. Let $V_{a+1}=\{v_1,u_1,u_2,\hdots,u_a\}$ and $V_{b+1}=\{v_4,w_1,w_2,\hdots,w_b\}$. It is clear that $\Pi_1:\{v_0\}\cup V_{a+1}\cup \{v_2\}\cup \{v_3\}\cup V_{b+1}\cup \{v_5\}$ is an equitable partition of $\mathcal{E}(T_{n,5}^{a,b})$ with the quotient matrix 
\[Q_{\Pi_1}=
\left [ {\begin{array}{cccccc}
0 & 0 & 0 & 3 & 4(b+1) & 5\\
0 & 0 & 0 & 0 & 0 & 4 \\
0 & 0 & 0 & 0 & 0 & 3 \\
3 & 0 & 0 & 0 & 0 & 0 \\
4 & 0 & 0 & 0 & 0 & 0 \\
5 & 4(a+1) & 3 & 0 & 0 & 0 \\
\end{array} } \right].\]
Now, the characteristic polynomial of $Q_{\Pi_1}$ is $$f(x)=x^2(x^4-16(a+b+75)x^2+256ab+400(a+b)+625).$$
Therefore, by Theorem \ref{quo-spec} and Theorem \ref{Inertia-odd}, the $\mathcal{E}$-spectrum of $T_{n,5}^{a,b}$ is given by 
\[\Spec_{\varepsilon}(T_{n,5}^{a,b})=
	\left\{ {\begin{array}{ccccc}
		\sqrt{\frac{\alpha+\sqrt{\beta}}{2}} & \sqrt{\frac{\alpha-\sqrt{\beta}}{2}} & 0 & -\sqrt{\frac{\alpha-\sqrt{\beta}}{2}} & -\sqrt{\frac{\alpha+\sqrt{\beta}}{2}} \\
		1 & 1 & n-4  & 1 & 1\\
		\end{array} } \right\},\]
where $\alpha=16a+16b+75$ and $\beta=256(a+b)^2+800(a+b)-1024ab+3125$. Thus,
\begin{align*}
\big(E_{\mathcal{E}}(T_{n,5}^{a,b})\big)^2=& 4\Bigg(\sqrt{\frac{\alpha+\sqrt{\beta}}{2}}+\sqrt{\frac{\alpha-\sqrt{\beta}}{2}}\Bigg)^2 \\
=& 4(\alpha+\sqrt{\alpha^2-\beta})\\
= & 4(16a+16b+75+\sqrt{1600(a+b)+1024ab+2500})\\
= & 4(16n-21+\sqrt{1600n-7100+1024ab})\qquad (\text{since,~} a+b=n-6).
\end{align*}
Again, by Lemma \ref{energy-bound}, we have $\big(E_{\mathcal{E}}(T_{n,3}^{0,n-4})\big)^2=4(13n-35+8\sqrt{n-3})$. Note that $16n-21+\sqrt{1600n-7100+1024ab}>13n-35+8\sqrt{n-3}$ for $n\geq 5$. Hence, $E_{\mathcal{E}}(T_{n,5}^{a,b})>E_{\mathcal{E}}(T_{n,3}^{0,n-4})$.
\end{proof}

\begin{lem}\label{diam6-ener1}
For $a\geq 1,b\geq 0$ and $2a+b=n-7$, $E_{\mathcal{E}}(T_{n,6}^{a,b,a})>E_{\mathcal{E}}(T_{n,3}^{0,n-4})$.
\end{lem}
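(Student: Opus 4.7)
The plan is to follow the strategy of Theorem \ref{diam5-ener}: determine the $\mathcal{E}$-spectrum of $T_{n,6}^{a,b,a}$ via an equitable partition, read off its $\mathcal{E}$-energy from the positive eigenvalues, and compare with the closed form for $E_{\mathcal{E}}(T_{n,3}^{0,n-4})$ supplied by Lemma \ref{energy-bound}.

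Let $P_7=v_0v_1v_2v_3v_4v_5v_6$ be the diametrical path of $T_{n,6}^{a,b,a}$, and let $U$, $W$, $X$ denote the sets of pendants attached to $v_2$, $v_3$, $v_4$ respectively. A direct case analysis on eccentricities shows that, apart from $v_0$ and $v_6$ themselves, every vertex of $T_{n,6}^{a,b,a}$ has its nonzero $\mathcal{E}$-entries only in the columns indexed by $v_0$ and $v_6$; in particular
\[
\Pi:\ \{v_0\},\ \{v_6\},\ \{v_1\},\ \{v_5\},\ \{v_2\},\ \{v_4\},\ U,\ X,\ \{v_3\},\ W
\]
is an equitable partition of $\mathcal{E}(T_{n,6}^{a,b,a})$. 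I would then write down the quotient matrix $Q_{\Pi}$; a Schur-complement expansion on the $2\times 2$ block indexed by $\{v_0,v_6\}$ factors its characteristic polynomial as
\[
\lambda^{6}\bigl(\lambda^2-6\lambda-(59+25a+32b)\bigr)\bigl(\lambda^2+6\lambda-(41+25a)\bigr),
\]
so the nonzero roots are $3\pm\sqrt{68+25a+32b}$ and $-3\pm 5\sqrt{a+2}$. Since the centre $v_3$ has precisely two neighbours ($v_2$ and $v_4$) whose branch-eccentricity equals $d-1=2$, Theorem \ref{inertia-even} gives inertia $(2,2,n-4)$, and combined with Theorem \ref{quo-spec} these four roots account for all the nonzero $\mathcal{E}$-eigenvalues of $T_{n,6}^{a,b,a}$. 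Therefore
\[
E_{\mathcal{E}}(T_{n,6}^{a,b,a})=2\bigl(\sqrt{68+25a+32b}+5\sqrt{a+2}\bigr).
\]

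For the comparison, squaring the desired inequality, substituting $b=n-7-2a$, and using Lemma \ref{energy-bound}, the claim reduces to
\[
19n-71-14a+10\sqrt{(32n-156-39a)(a+2)}>8\sqrt{n-3}.
\]
The constraint $b\geq 0$ forces $a\leq(n-7)/2$, so $19n-71-14a\geq 12n-22$, and the elementary identity $(12n-22)^2-64(n-3)=144n^2-592n+676>0$ (negative discriminant) gives $12n-22>8\sqrt{n-3}$; the nonnegative square-root term then absorbs any remaining slack.

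The main obstacle I anticipate is the bookkeeping needed to identify the nonzero entries of $\mathcal{E}(T_{n,6}^{a,b,a})$ and then to carry out the symbolic factorization of the characteristic polynomial of the $10\times 10$ quotient matrix $Q_{\Pi}$ cleanly. Once the spectrum is in hand, the inequality step is essentially elementary estimation.
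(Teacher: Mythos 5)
Your proof is correct. The spectrum computation is essentially the paper's own argument: an equitable partition of $\mathcal{E}(T_{n,6}^{a,b,a})$, Theorem \ref{quo-spec} to embed the quotient's spectrum in $\Spec_{\varepsilon}(T_{n,6}^{a,b,a})$, and Theorem \ref{inertia-even} with $l=2$ to conclude that the four nonzero quotient eigenvalues exhaust the nonzero $\mathcal{E}$-eigenvalues; your ten-cell partition refines the paper's eight-cell one, and your factorization $\bigl(\lambda^2-6\lambda-(25a+32b+59)\bigr)\bigl(\lambda^2+6\lambda-(25a+41)\bigr)$ expands to exactly the paper's quartic, so you recover the same eigenvalues $3\pm\sqrt{25a+32b+68}$ and $-3\pm5\sqrt{a+2}$ and the same energy $2\bigl(\sqrt{32n-39a-156}+5\sqrt{a+2}\bigr)$. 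Where you genuinely differ is the final comparison, and your route is simpler: the paper splits into the cases $a<\frac{19n-120}{39}$ and $a>\frac{19n-120}{39}$, invoking the auxiliary bounds of Lemmas \ref{bound-spec} and \ref{bound-second} in the first case and a separate squaring computation in the second (and, as written, the attainable boundary value $a=\frac{19n-120}{39}$, e.g.\ $n=33$, $a=13$, $b=0$, is not formally covered), whereas your single estimate $19n-71-14a\geq 12n-22>8\sqrt{n-3}$ (the last inequality because $(12n-22)^2-64(n-3)=144n^2-592n+676$ has negative discriminant), combined with the observation that the cross term $10\sqrt{(32n-156-39a)(a+2)}$ is nonnegative since $b\geq 0$ forces $32n-156-39a\geq\frac{25n-39}{2}>0$, handles all admissible $a$ uniformly and needs only Lemma \ref{energy-bound}. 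The one point to make explicit in a final write-up is that nonnegativity of the radicand, since discarding the square-root term is only legitimate once that is checked.
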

\begin{proof}
First, let us calculate the $\mathcal{E}$-spectrum of $T_{n,6}^{a,b,a}$ for $2a+b=n-7$,$a\geq 1,b\geq 0$. Let $P_7=v_0v_1v_2v_3v_4v_5v_6$ be the unique diametrical path in $T_{n,6}^{a,b,a}$. For $i,k=1,2,\hdots,a$ and $j=1,2,\hdots,b$, let us assume that $u_i,y_j$ and $w_k$ be the pendant vertices adjacent to $v_2,v_3$ and $v_4$, respectively. Let $U_{a+1}=\{v_1,u_1,u_2,\hdots,u_a\}$, $V_b=\{y_1,y_2,\hdots,y_b\}$ and $V_{a+1}=\{v_4,w_1,w_2,\hdots,w_a\}$. It is clear that $\Pi_2:\{v_0\}\cup U_{a+1}\cup \{v_2\}\cup \{v_3\}\cup V_{b}\cup \{v_4\}\cup V_{a+1}\cup \{v_6\}$ is an equitable partition of $\mathcal{E}(T_{n,6}^{a,b,a})$ with the quotient matrix
\[Q_{\Pi_2}=
\left [ {\begin{array}{cccccccc}
0 & 0 & 0 & 3 & 4b & 4 & 5(a+1) & 6\\
0 & 0 & 0 & 0 & 0 & 0 & 0 & 5 \\
0 & 0 & 0 & 0 & 0 & 0 & 0 & 4 \\
3 & 0 & 0 & 0 & 0 & 0 & 0 & 3 \\
4 & 0 & 0 & 0 & 0 & 0 & 0 & 4 \\
4 & 0 & 0 & 0 & 0 & 0 & 0 & 0 \\
5 & 0 & 0 & 0 & 0 & 0 & 0 & 0 \\
6 & 5(a+1) & 4 & 3 & 4b & 0 & 0 & 0\\
\end{array} } \right].\]
Now, the characteristic polynomial of $Q_{\Pi_2}$ is $f(x)=x^4\big(x^4-(50a+32b+136)x^2-(192b+108)x+625a^2+2500a+800ab+1312b+2419\big)$. Therefore, from Theorem \ref{quo-spec} and Theorem \ref{inertia-even}, it follows that the $\mathcal{E}$-spectrum of $T_{n,6}^{a,b,a}$ is given by 
\[\Spec_{\varepsilon}(T_{n,6}^{a,b,a})=
	\left\{ {\begin{array}{ccc}
		3\pm \sqrt{25a+32b+68} & \pm 5\sqrt{a+2}-3 & 0\\
		1 & 1 & n-4 \\
		\end{array} } \right\}.\]
Hence,
\begin{align*}
 E_{\mathcal{E}}(T_{n,6}^{a,b,a})=&2(\sqrt{25a+32b+68}+5\sqrt{a+2})\\
 =&2(\sqrt{32n-39a-156}+5\sqrt{a+2}) \qquad \text{(since, $2a+b=n-7$)}.
\end{align*}
\textbf{Case 1}: If $a<\frac{19n-120}{39}$, then $\sqrt{32n-39a-156}>\sqrt{13n-36}$ and hence by Lemma \ref{bound-spec} and Lemma \ref{bound-second}, we have $E_{\mathcal{E}}(T_{n,6}^{a,b,a})=2(\sqrt{32n-39a-156}+5\sqrt{a+2})>2(\sqrt{13n-36}+\sqrt{2})>2(\xi_1(T_{n,3}^{0,n-4})+\xi_2(T_{n,3}^{0,n-4}))=E_{\mathcal{E}}(T_{n,3}^{0,n-4})$.\\
\textbf{Case 2}: Let $a>\frac{19n-120}{39}$. Since $b=n-2a-7\geq 0$, therefore
\begin{align*}
 E_{\mathcal{E}}(T_{n,6}^{a,b,a})=&2(\sqrt{32n-39a-156}+5\sqrt{a+2})\\
 >& 2\bigg(\sqrt{\frac{25n-39}{2}}+\sqrt{\frac{475n-1050}{39}}\bigg).   
\end{align*}
Hence,
\begin{align*}
&(E_{\mathcal{E}}(T_{n,6}^{a,b,a}))^2-(E_{\mathcal{E}}(T_{n,3}^{0,n-4}))^2\\
>& 4\bigg(\sqrt{\frac{25n-39}{2}}+\sqrt{\frac{475n-3000}{39}}\bigg)^2-4\bigg(\sqrt{13n-35+8\sqrt{n-3}}\bigg)^2\\
=&4\bigg(\frac{624(n-3-\sqrt{n-3})+287n+981}{78}+2\sqrt{\frac{11875n^2-93525n+117000}{78}}\bigg)\\
>& 0.
\end{align*}
\end{proof}

\begin{thm}\label{diam6-ener2}
For $a,b\geq 0$ and $2a+b=n-8$, $E_{\mathcal{E}}(T_{n,6}^{a,b,a+1})>E_{\mathcal{E}}(T_{n,3}^{0,n-4})$.
\end{thm}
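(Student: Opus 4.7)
The plan is to parallel the structure of Lemma \ref{diam6-ener1}, with one crucial difference: the left--right asymmetry ($a$ pendants on $v_2$ versus $a+1$ on $v_4$) prevents the quotient matrix of $\mathcal{E}(T_{n,6}^{a,b,a+1})$ from having the block symmetry that allowed the characteristic polynomial in Lemma \ref{diam6-ener1} to factor into two quadratics. I therefore avoid computing the four nonzero $\mathcal{E}$-eigenvalues explicitly and instead derive a sufficiently sharp lower bound on $(\xi_1+\xi_2)^2$ directly from $\tr(\mathcal{E}^2)$.

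I would first apply Theorem \ref{inertia-even} with $d=3$ to the centre $v_3$ of $T_{n,6}^{a,b,a+1}$: among its neighbours only $v_2$ and $v_4$ have component-eccentricity $d-1=2$ (each $y_j$ has component-eccentricity $0$), so the inertia of $\mathcal{E}(T_{n,6}^{a,b,a+1})$ is $(2,2,n-4)$. Hence $E_{\mathcal{E}}(T_{n,6}^{a,b,a+1})=2(\xi_1+\xi_2)$ with $\xi_1\geq\xi_2>0$, and the two negative $\mathcal{E}$-eigenvalues sum to $-(\xi_1+\xi_2)$ since $\tr(\mathcal{E})=0$. Next I would enumerate the nonzero entries of $\mathcal{E}(T_{n,6}^{a,b,a+1})$: a direct check of $\min\{e(u),e(v)\}$ against $d(u,v)$ shows that every nonzero entry lies in a row or column indexed by $v_0$ or $v_6$, and summing the squares of these entries gives
\[
\tr\!\left(\mathcal{E}(T_{n,6}^{a,b,a+1})^2\right)=2(50a+32b+161)=100a+64b+322.
\]
Since $(x+y)^2\geq x^2+y^2$ whenever $x,y$ have the same sign, applying this to each of the positive pair and the negative pair, then adding and using $|\xi_{n-1}+\xi_n|=\xi_1+\xi_2$, yields $2(\xi_1+\xi_2)^2\geq \tr(\mathcal{E}^2)$, hence $E_{\mathcal{E}}(T_{n,6}^{a,b,a+1})^2\geq 200a+128b+644$.

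Finally, combining with Lemma \ref{energy-bound} and the substitution $n=2a+b+8$, the required inequality reduces to $24a+19b+92>8\sqrt{2a+b+5}$. Writing $N=2a+b+5\geq 5$, the left-hand side equals $12N+32+7b\geq 12N+32$, and $(12N+32)^2-64N=144N^2+704N+1024>0$ closes the argument.

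The step I expect to be the main obstacle is resisting the urge to compute $\xi_1+\xi_2$ explicitly: the quartic arising from the $8\times 8$ quotient matrix no longer admits a factorisation into two quadratics with $\pm 6\lambda$ linear terms (as happened for $T_{n,6}^{a,b,a}$ in Lemma \ref{diam6-ener1}), and the ensuing Cardano-style formulas are opaque. The trace-based shortcut works because the inertia $(2,2,n-4)$ concentrates the entire $\ell^2$-mass of $\mathcal{E}$ on four eigenvalues paired by sign, which converts $\tr(\mathcal{E}^2)$ into a tight enough lower bound on $(\xi_1+\xi_2)^2$ to outpace $13n-35+8\sqrt{n-3}$ with comfortable slack.
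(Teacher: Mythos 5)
Your argument is correct, and it is a genuinely different — and noticeably leaner — route than the one in the paper. The paper also starts from the equitable partition and the inertia $(2,2,n-4)$, but then works with the quartic factor $p(x)=x^4-(50a+32b+161)x^2-(192b+108)x+\cdots$ explicitly: it brackets the two positive roots via sign changes of $p$ at $5\sqrt{a}-3$, $1+\sqrt{32n-39a-156}$ and $3+\sqrt{32n-39a-156}$, deduces $E_{\mathcal{E}}(T_{n,6}^{a,b,a+1})>2(\sqrt{32n-39a-156}+5\sqrt{a}-2)$, and then splits into two cases according to whether $a$ is below or above $\tfrac{19n-120}{39}$, comparing against $\xi_1(T_{n,3}^{0,n-4})+\xi_2(T_{n,3}^{0,n-4})$ in the first case and squaring out in the second. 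You bypass the quartic entirely: from trace zero and inertia $(2,2,n-4)$ you get $\xi_{n-1}+\xi_n=-(\xi_1+\xi_2)$, and $(x+y)^2\geq x^2+y^2$ applied to each signed pair gives $E_{\mathcal{E}}^2=4(\xi_1+\xi_2)^2\geq 2\tr(\mathcal{E}^2)$; your entry count ($\tr(\mathcal{E}^2)=2(50a+32b+161)$, which indeed matches $-2$ times the $x^2$-coefficient of the paper's quartic, and your claim that all nonzero entries sit in the rows of $v_0$ and $v_6$ checks out) then reduces everything to the single inequality $24a+19b+92>8\sqrt{2a+b+5}$, with no case analysis. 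What the paper's approach buys is sharper information — actual two-sided locations of $\xi_1$ and $\xi_2$ — but that precision is not needed here, and your trace bound, despite discarding the cross terms $2\xi_1\xi_2$, clears the target with ample margin. The one point worth writing out carefully in a final version is the verification that no nonzero entry of $\mathcal{E}(T_{n,6}^{a,b,a+1})$ avoids both $v_0$ and $v_6$ (equivalently, that every pair $u,v$ with $d(u,v)=\min\{e(u),e(v)\}$ contains a diametrical endpoint), since the whole computation of $\tr(\mathcal{E}^2)$ rests on it.
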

\begin{proof}
First, let us calculate the $\mathcal{E}$-spectrum of $T_{n,6}^{a,b,a+1}$ for $2a+b=n-8$ and $a,b\geq 0$. Let $P_7=v_0v_1v_2v_3v_4v_5v_6$ be the unique diametrical path in $T_{n,6}^{a,b,a+1}$. For $i=1,2,\hdots,a$; $j=1,2,\hdots,b$ and $k=1,2,\hdots,a,a+1$, let us assume that $u_i,y_j$ and $w_k$ be the pendant vertices adjacent to $v_2,v_3$ and $v_4$, respectively. Let $U_{a+1}=\{v_1,u_1,u_2,\hdots,u_a\}$, $V_b=\{y_1,y_2,\hdots,y_b\}$ and $V_{a+2}=\{v_4,w_1,w_2,\hdots,w_a,w_{a+1}\}$. It is clear that $\Pi_3:\{v_0\}\cup U_{a+1}\cup \{v_2\}\cup \{v_3\}\cup V_{b}\cup \{v_4\}\cup V_{a+2}\cup \{v_6\}$ is an equitable partition of $\mathcal{E}(T_{n,6}^{a,b,a+1})$ with the quotient matrix
\[Q_{\Pi_3}=
\left [ {\begin{array}{cccccccc}
0 & 0 & 0 & 3 & 4b & 4 & 5(a+2) & 6\\
0 & 0 & 0 & 0 & 0 & 0 & 0 & 5 \\
0 & 0 & 0 & 0 & 0 & 0 & 0 & 4 \\
3 & 0 & 0 & 0 & 0 & 0 & 0 & 3 \\
4 & 0 & 0 & 0 & 0 & 0 & 0 & 4 \\
4 & 0 & 0 & 0 & 0 & 0 & 0 & 0 \\
5 & 0 & 0 & 0 & 0 & 0 & 0 & 0 \\
6 & 5(a+1) & 4 & 3 & 4b & 0 & 0 & 0\\
\end{array} } \right].\]
Now, the characteristic polynomial of $Q_{\Pi_3}$ is $f(x)=x^4(x^4-(50a+32b+161)x^2-(192b+108)x+625a^2+3125a+800ab+1712b+3669)$. Therefore, by Theorem \ref{quo-spec} and Theorem \ref{inertia-even}, it follows that the $\mathcal{E}$-eigenvalues of $T_{n,6}^{a,b,a+1}$ are $0$ with multiplicity $n-4$ and the roots of the polynomial 
\begin{align*}
 p(x)=&x^4-(50a+32b+161)x^2-(192b+108)x+625a^2+3125a+800ab+1712b+3669\\
 =& x^4+(14a-32n+95)x^2+(384a-192n+1428)x+800na+1712n-975a^2-6699a\\
 & -10027 \qquad \text{(since, $b=n-2a-8$)}.
\end{align*}
By Lemma \ref{inertia-even}, it follows that the tree $T_{n,6}^{a,b,a+1}$ has exactly two positive and two negative $\mathcal{E}$-eigenvalues. Hence, $p(x)$ has exactly two positive roots, say $x_1\geq x_2$. Since $b=n-2a-8\geq 0$, therefore $p(5\sqrt{a}-3)=2000(n-2a-7)+3750\sqrt{a}+625>0$, $p(1+\sqrt{32n-39a-156})=-\big(256a+272(n-1)+(128(n-2a-8)+26)\sqrt{32n-39a-156}+195\big) <0$ and $p(3+\sqrt{32n-39a-156})=624(n-2a-6)+234\sqrt{32n-39a-156}+29>0$. Therefore, by the intermediate value theorem, we have $5\sqrt{a}-3<x_2<1+\sqrt{32n-39a-156}$ and $1+\sqrt{32n-39a-156}<x_1<3+\sqrt{32n-39a-156}$. Thus, $E_{\mathcal{E}}(T_{n,6}^{a,b,a+1})>2(\sqrt{32n-39a-156}+5\sqrt{a}-2)$. 

\textbf{Case 1.} If $a<\frac{19n-120}{39}$, then $\sqrt{32n-39a-156}>\sqrt{13n-36}$ and hence by Lemma \ref{bound-spec} and Lemma \ref{bound-second}, we have $E_{\mathcal{E}}(T_{n,6}^{a,b,a+1})>2(\sqrt{32n-39a-156}+5\sqrt{a}-2)>2(\sqrt{13n-36}+\sqrt{2})>2(\xi_1(T_{n,3}^{0,n-4})+\xi_2(T_{n,3}^{0,n-4}))=E_{\mathcal{E}}(T_{n,3}^{0,n-4})$.

\textbf{Case 2.} Let $a>\frac{19n-120}{39}$. Since $b=n-2a-8\geq 0$, therefore 
\begin{align*}
 E_{\mathcal{E}}(T_{n,6}^{a,b,a+1})>&2(\sqrt{32n-39a-156}+5\sqrt{a}-2)\\
 >& 2\bigg(\sqrt{\frac{25n}{2}}+\sqrt{\frac{475n-3000}{39}}-2\bigg).\end{align*}
Hence,
\begin{align*}
&(E_{\mathcal{E}}(T_{n,6}^{a,b,a+1}))^2-(E_{\mathcal{E}}(T_{n,3}^{0,n-4}))^2\\
>& 4\bigg(\sqrt{\frac{25n}{2}}+\sqrt{\frac{475n-3000}{39}}-2\bigg)^2-4\bigg(\sqrt{13n-35+8\sqrt{n-3}}\bigg)^2\\
=&\bigg(\frac{1248(n-3-\sqrt{n-3})+574(n-11)+398}{39}\bigg)+80\sqrt{\frac{n}{2}}\bigg(\sqrt{\frac{19n-120}{78}}-1\bigg)\\
& +80\sqrt{\frac{19n-120}{39}}\bigg(\sqrt{\frac{n}{2}}-1\bigg)+40\sqrt{\frac{19n^2-120n}{78}}\\
>& 0.
\end{align*}
\end{proof}

\begin{thm}\label{diam7-ener}
For $b\geq a \geq 1$ and $a+b=n-8$, $E_{\mathcal{E}}(T_{n,7}^{a,b})>E_{\mathcal{E}}(T_{n,3}^{0,n-4})$.
\end{thm}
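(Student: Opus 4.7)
The plan is to compute the $\mathcal{E}$-spectrum of $T_{n,7}^{a,b}$ in closed form via an equitable partition (paralleling the proof of Theorem \ref{diam5-ener}), and then to compare the resulting $\mathcal{E}$-energy with the expression for $E_{\mathcal{E}}(T_{n,3}^{0,n-4})$ supplied by Lemma \ref{energy-bound}.

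Let $P_8 = v_0 v_1 \cdots v_7$ be the unique diametrical path in $T_{n,7}^{a,b}$, and let $u_1,\ldots,u_a$ and $w_1,\ldots,w_b$ denote the pendants at $v_3$ and $v_4$. A brief check of eccentricities shows that each of $v_0,v_1,v_2,v_3$ and each $u_i$ has $v_7$ as its unique eccentric vertex (at distances $7,6,5,4,5$), and symmetrically each of $v_7,v_6,v_5,v_4$ and each $w_j$ is eccentric only to $v_0$. Hence
\[
\Pi : \{v_0\} \cup \{v_1\} \cup \{v_2\} \cup \{v_3\} \cup \{u_1,\ldots,u_a\} \cup \{w_1,\ldots,w_b\} \cup \{v_4\} \cup \{v_5\} \cup \{v_6\} \cup \{v_7\}
\]
is an equitable partition for $\mathcal{E}(T_{n,7}^{a,b})$; after reordering so that the first five parts are $\{v_0\},\{v_1\},\{v_2\},\{v_3\},\{u_1,\ldots,u_a\}$, the corresponding quotient matrix $Q$ has the bipartite block form with $5\times 5$ off-diagonal blocks $A,B$ and zero diagonal blocks.

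The nonzero eigenvalues of such a block matrix come in pairs $\pm\sqrt{\mu}$ as $\mu$ ranges over the nonzero eigenvalues of $AB$. A direct computation, noting that rows $2,3,4,5$ of $AB$ are each a scalar multiple of the last row of $B$, namely $(7,6,5,4,5a)$, shows that $AB$ has rank $2$, with its two nonzero eigenvalues $\mu_1\geq\mu_2$ satisfying
\begin{align*}
\mu_1 + \mu_2 &= \tr(AB) = 25n + 3,\\
\mu_1\mu_2 &= 625ab + 1925n - 9471,
\end{align*}
after substituting $a+b = n-8$. By Theorem \ref{quo-spec} the numbers $\pm\sqrt{\mu_1},\pm\sqrt{\mu_2}$ are $\mathcal{E}$-eigenvalues of $T_{n,7}^{a,b}$, and by Theorem \ref{Inertia-odd} they exhaust the nonzero $\mathcal{E}$-eigenvalues, so
\[
\bigl(E_{\mathcal{E}}(T_{n,7}^{a,b})\bigr)^2 \;=\; 4\bigl(\sqrt{\mu_1}+\sqrt{\mu_2}\bigr)^2 \;=\; 4\Bigl(25n + 3 + 2\sqrt{625ab + 1925n - 9471}\Bigr).
\]

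Subtracting $\bigl(E_{\mathcal{E}}(T_{n,3}^{0,n-4})\bigr)^2 = 4\bigl(13n-35+8\sqrt{n-3}\bigr)$ reduces the theorem to the inequality
\[
12n + 38 + 2\sqrt{625ab + 1925n - 9471} \;>\; 8\sqrt{n-3};
\]
since the radical on the left is nonnegative it suffices to check $12n+38 > 8\sqrt{n-3}$, and squaring yields $144n^2 + 848n + 1636 > 0$, which is immediate. The main obstacle is essentially bookkeeping: writing out the $10\times 10$ quotient matrix carefully enough to extract cleanly the values $\tr(AB) = 25n+3$ and the sum of its $2\times 2$ principal minors $625ab + 1925n - 9471$; once these two invariants are in hand, the closing comparison is a one-line verification.
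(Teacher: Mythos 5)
Your proposal is correct and follows essentially the same route as the paper: an equitable partition of $\mathcal{E}(T_{n,7}^{a,b})$, the resulting quotient matrix combined with Theorems \ref{quo-spec} and \ref{Inertia-odd} to identify the two positive $\mathcal{E}$-eigenvalues via their sum $25n+3$ and product $625ab+1925n-9471$, and then a comparison of squared energies against Lemma \ref{energy-bound}. The only cosmetic difference is that you use a finer ten-cell partition and extract the invariants from the rank-two matrix $AB$, whereas the paper works with an eight-cell partition and the characteristic polynomial of its quotient directly; the resulting numbers and the closing inequality are identical.
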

\begin{proof}
First, let us calculate the $\mathcal{E}$-spectrum of $T_{n,7}^{a,b}$, where $a+b=n-8$ with $b\geq a \geq 1$. Let $P_8=v_0v_1v_2v_3v_4v_5v_6v_7$ be the unique diametrical path in $T_{n,7}^{a,b}$. Assume that $u_1,u_2,\hdots,u_a$ be the pendant vertices adjacent to $v_2$ and $w_1,w_2,\hdots,w_b$ be the pendant vertices adjacent to $v_3$. If $V_{a+1}=\{v_1,u_1,u_2,\hdots,u_a\}$ and $V_{b+1}=\{v_4,w_1,w_2,\hdots,w_b\}$, then it is easy to see that $\Pi_4:\{v_0\}\cup \{v_1\}\cup V_{a+1}\cup \{v_3\}\cup \{v_4\}\cup V_{b+1}\cup \{v_6\}\cup \{v_7\}$ is an equitable partition of $\mathcal{E}(T_{n,7}^{a,b})$ with the quotient matrix 
\[Q_{\Pi_4}=
\left [ {\begin{array}{cccccccc}
0 & 0 & 0 & 0 & 4 & 5(b+1) & 6 & 7\\
0 & 0 & 0 & 0 & 0 & 0 & 0 & 6 \\
0 & 0 & 0 & 0 & 0 & 0 & 0 & 5 \\
0 & 0 & 0 & 0 & 0 & 0 & 0 & 4 \\
4 & 0 & 0 & 0 & 0 & 0 & 0 & 0 \\
5 & 0 & 0 & 0 & 0 & 0 & 0 & 0 \\
6 & 0 & 0 & 0 & 0 & 0 & 0 & 0 \\
7 & 6 & 5(a+1) & 4 & 0 & 0 & 0 & 0\\
\end{array} } \right].\]
Now, the characteristic polynomial of $Q_{\Pi_4}$ is $f(x)=x^4(x^4-(25a+25b+203)x^2+625ab+1925(a+b)+5929)$. Therefore, by Theorem \ref{quo-spec} and Theorem \ref{Inertia-odd}, the $\mathcal{E}$-spectrum of $T_{n,7}^{a,b}$ is given by 
\[\Spec_{\varepsilon}(T_{n,7}^{a,b})=
	\left\{ {\begin{array}{ccccc}
		\sqrt{\frac{\alpha+\sqrt{\beta}}{2}} & \sqrt{\frac{\alpha-\sqrt{\beta}}{2}} & 0 & -\sqrt{\frac{\alpha-\sqrt{\beta}}{2}} & -\sqrt{\frac{\alpha+\sqrt{\beta}}{2}} \\
		1 & 1 & n-4  & 1 & 1\\
		\end{array} } \right\},\]
where $\alpha=25(a+b)+203$ and $\beta=625(a+b)^2+2450(a+b)-2500ab+17493$. Thus, \begin{align*}
\big(E_{\mathcal{E}}(T_{n,7}^{a,b})\big)^2=& 4\Bigg(\sqrt{\frac{\alpha+\sqrt{\beta}}{2}}+\sqrt{\frac{\alpha-\sqrt{\beta}}{2}}\Bigg)^2 \\
=& 4(\alpha+\sqrt{\alpha^2-\beta})\\
= & 4(25(a+b)+203+\sqrt{7700(a+b)+2500ab+23716})\\
= & 4(25n+3+\sqrt{7700n-37884+2500ab})\qquad (\text{since,~} a+b=n-8).
\end{align*}
Again, by Lemma \ref{energy-bound}, we have $\big(E_{\mathcal{E}}(T_{n,3}^{0,n-4})\big)^2=4(13n-35+8\sqrt{n-3})$. Note that $25n+3+\sqrt{7700n-37884+2500ab}>13n-35+8\sqrt{n-3}$ for $n\geq 5$. Hence, $E_{\mathcal{E}}(T_{n,7}^{a,b})>E_{\mathcal{E}}(T_{n,3}^{0,n-4})$.
\end{proof}

 Now, we obtain a lower bound and an upper bound for the $\mathcal{E}$-spectral radius of $T_{n,3}^{0,n-4}$, which will be used in the subsequent theorems.

\begin{lem}\label{bound-spec}
For $n\geq 5$, $\sqrt{13n-37}<\xi_1(T_{n,3}^{0,n-4})<\sqrt{13n-36}$.
\end{lem}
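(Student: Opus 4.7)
The plan is to start from the explicit expression for $\xi_1(T_{n,3}^{0,n-4})$ provided by Lemma \ref{dstar-spec} (and already recorded inside the proof of Lemma \ref{energy-bound}), namely
$$\xi_1(T_{n,3}^{0,n-4})=\sqrt{\frac{13n-35+\sqrt{169n^2-974n+1417}}{2}},$$
and then to reduce the claimed two-sided inequality to a polynomial comparison by squaring twice.

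Concretely, the desired chain $\sqrt{13n-37}<\xi_1(T_{n,3}^{0,n-4})<\sqrt{13n-36}$ is equivalent, after squaring and multiplying by $2$, to
$$26n-74 \;<\; 13n-35+\sqrt{169n^2-974n+1417} \;<\; 26n-72,$$
i.e.\ to $13n-39 < \sqrt{169n^2-974n+1417} < 13n-37$. For $n\geq 5$ both bounds $13n-39$ and $13n-37$ are positive, so a second squaring is legitimate and turns the problem into
$$(13n-39)^2 < 169n^2-974n+1417 < (13n-37)^2,$$
which expand to $169n^2-1014n+1521 < 169n^2-974n+1417 < 169n^2-962n+1369$. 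Cancelling the $169n^2$ terms, the left inequality reduces to $40n>104$ (true for $n\geq 3$) and the right one reduces to $12n>48$ (true for $n\geq 5$), so both hold for every $n\geq 5$.

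There is no real obstacle here; the only point requiring a moment of care is verifying the sign condition $13n-39>0$ before performing the second squaring, which is why the hypothesis $n\geq 5$ is exactly what is needed (and it is also the range in which the stronger energy comparisons of the previous theorems are applied). The proof will therefore consist of invoking Lemma \ref{dstar-spec} to rewrite $\xi_1(T_{n,3}^{0,n-4})^2$, isolating the square root, checking positivity, squaring, and reading off the two elementary linear inequalities above.
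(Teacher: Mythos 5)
Your proof is correct and takes essentially the same approach as the paper: both start from the explicit formula $\xi_1(T_{n,3}^{0,n-4})=\sqrt{\tfrac{13n-35+\sqrt{169n^2-974n+1417}}{2}}$ from Lemma \ref{dstar-spec} and reduce the two-sided bound to elementary inequalities. The only cosmetic difference is that the paper sandwiches the inner radical by writing $169n^2-974n+1417=(13n-37)^2-12(n-4)$ and using $\sqrt{M(M-1)}>M-1$, whereas you square twice and compare the resulting quadratics; your linear inequalities $40n>104$ and $12n>48$ check out, and the positivity of $13n-39$ for $n\geq 5$ justifies the second squaring.
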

\begin{proof}
 It follows by Lemma \ref{dstar-spec} that  $$\xi_1(T_{n,3}^{0,n-4})=\sqrt{\frac{13n-35+\sqrt{(13n-37)^2-12(n-4)}}{2}}.$$
 Since $13n-37>12(n-4)$, therefore
\begin{eqnarray*}
\sqrt{(13n-37)^2-12(n-4)} &>& \sqrt{(13n-37)^2-(13n-37)}\\
&=& \sqrt{(13n-37)(13n-38)}\\
&>& 13n-38.
\end{eqnarray*}
Hence, $$\sqrt{\frac{13n-35+\sqrt{(13n-37)^2-12(n-4)}}{2}}>\sqrt{\frac{13n-36+13n-38}{2}}=\sqrt{13n-37}. $$
Again, $\sqrt{(13n-37)^2-12(n-4)} < 13n-37$ implies that $$\sqrt{\frac{13n-35+\sqrt{(13n-37)^2-12(n-4)}}{2}}< \sqrt{\frac{13n-35+13n-37}{2}}=\sqrt{13n-36}. $$
Thus, $\sqrt{13n-37}<\xi_1(T_{n,3}^{0,n-4})<\sqrt{13n-36}$.
\end{proof}

It is known that the tree $T_{n,3}^{0,n-4}$ has the minimum $\mathcal{E}$-spectral radius among all trees on $4\leq n\leq 15$ vertices. In the following theorem, we prove that the tree $T_{n,3}^{0,n-4}$ has the minimum $\mathcal{E}$-spectral radius among all trees on $n\geq 16$ vertices with diameter $d\geq 8$. 

\begin{thm}\label{spec-diam-geq8}
Let $T$ be a tree on $n\geq 16$ vertices with diameter $d\geq 8$. Then, $$\xi_1(T)>\xi_1(T_{n,3}^{0,n-4}).$$
\end{thm}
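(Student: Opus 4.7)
The plan is to induct on $d \geq 8$ and show that the minimum of $\xi_1$ over $\mathcal{T}_{n,d}$ strictly exceeds $\xi_1(T_{n,3}^{0,n-4})$; the theorem then follows since every $T$ with $\diam(T) = d$ satisfies $\xi_1(T) \geq \min_{T' \in \mathcal{T}_{n,d}} \xi_1(T')$. By Theorems \ref{min-odd} and \ref{min-even} this minimum is attained at the balanced tree $T^{*}_d$ (of type $T_{n,d}^{a,b}$ for odd $d \geq 5$ and $T_{n,d}^{a,0,c}$ for even $d \geq 6$), so it suffices to show $\xi_1(T^{*}_d) > \xi_1(T_{n,3}^{0,n-4})$ for every $d \geq 8$.

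For the base cases $d = 8$ and $d = 9$, I would invoke Theorems \ref{spec odd} and \ref{spec even}: $\xi_1(T^{*}_9)$ is the largest root of $(\rho^2 - 174 - 36 b)(\rho^2 - 174 - 36 a) = 81 \rho^2$ with $a + b = n - 10$, and $\xi_1(T^{*}_8)$ is the largest root of $(\rho^2 - 126 - 36 c)(\rho^2 - 126 - 36 a) = (8\rho + 16)^2$ with $a + c = n - 9$. In both equations the right-hand side is strictly positive for $\rho > 0$, so the two linear-in-$\rho^2$ factors on the left must have the same sign at any positive root. A sign check at $\rho = 0$, at $\rho = \sqrt{174 + 36 b}$ or $\rho = \sqrt{126 + 36 c}$, and as $\rho \to \infty$, combined with the inertia counts from Theorems \ref{Inertia-odd} and \ref{inertia-even} (which cap the number of positive eigenvalues at two), forces the largest positive root into the rightmost interval where both factors are positive. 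This yields $\xi_1(T^{*}_9)^2 > 174 + 36 b \geq 18n - 6$ and $\xi_1(T^{*}_8)^2 > 126 + 36 c \geq 18n - 36$. Both bounds exceed $13n - 36$ for $n \geq 16$, and by Lemma \ref{bound-spec}, $13n - 36 > \xi_1(T_{n,3}^{0,n-4})^2$.

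For the inductive step, let $d \geq 10$ and assume the conclusion for every $8 \leq d' < d$. If $d$ is odd, Lemma \ref{diam-red-odd} applied to $T^{*}_d = T_{n,d}^{a,b}$ produces $T_{n,d-2}^{a+1,b+1}$; the balance conditions ensure this is exactly $T^{*}_{d-2}$, so $\xi_1(T^{*}_d) > \xi_1(T^{*}_{d-2})$. If $d$ is even, Lemma \ref{diam-red-even} applied to $T^{*}_d = T_{n,d}^{a,0,c}$ produces a tree $T_{n,d-1}^{a,c+1}$ in $\mathcal{T}_{n,d-1}$ with strictly smaller $\xi_1$; although $T_{n,d-1}^{a,c+1}$ need not coincide with $T^{*}_{d-1}$, its $\xi_1$ is at least $\xi_1(T^{*}_{d-1})$ by Theorem \ref{min-odd}. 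Either way, $\xi_1(T^{*}_d)$ strictly exceeds $\xi_1(T^{*}_{d'})$ for some $d' \in \{d-1, d-2\}$ with $d' \geq 8$, and the induction hypothesis closes the argument. The main obstacle is really the base-case analysis; once the sign observation on the right-hand sides of the secular equations is in hand, it produces a clean lower bound of the form $\xi_1(T^{*}_d)^2 \geq 18n - O(1)$ that strictly beats the $13n - O(1)$ upper bound on $\xi_1(T_{n,3}^{0,n-4})^2$ from Lemma \ref{bound-spec}, and the inductive step is then pure bookkeeping with the reduction lemmas.
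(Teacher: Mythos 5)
Your proposal is correct, and its skeleton is the same as the paper's: reduce every tree of diameter $d\ge 8$ to the extremal trees of diameters $8$ and $9$ via Theorems \ref{min-odd}, \ref{min-even} and the diameter-reduction Lemmas \ref{diam-red-odd}, \ref{diam-red-even}, then show those two base trees already beat $\sqrt{13n-36}$ and invoke Lemma \ref{bound-spec}. (The paper phrases the reduction as ``everything with $d\ge 9$ collapses to $d=9$'' rather than as a formal induction, but that is cosmetic; your observation that the balanced $T_{n,d}^{a,b}$ maps to the balanced $T_{n,d-2}^{a+1,b+1}$ is exactly what makes the paper's one-line reduction legitimate.) Where you genuinely differ is in the base cases. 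The paper extracts explicit closed forms for $\xi_1$ from Theorems \ref{spec odd} and \ref{spec even}, which forces a case split on the parity of $n$ and, for $d=8$ with $n$ even, a monotonicity analysis of the derivative of the quartic $x^4-(36n-8)x^2-256x+\cdots$ evaluated at $\xi_1(T_{n,3}^{0,n-4})$. Your sign argument --- the characteristic polynomial of the quotient matrix is negative at $\rho=\sqrt{174+36b}$ (resp.\ $\sqrt{126+36c}$) because the right-hand side $81\rho^2$ (resp.\ $(8\rho+16)^2$) is positive there, and tends to $+\infty$ --- gives $\xi_1^2>18n-O(1)$ in both cases with no parity split and no calculus, which is cleaner; note that the appeal to the inertia theorems is not actually needed for this step, since the largest root of the quartic automatically lies to the right of any point where the polynomial is negative. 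Both routes land on the same comparison $18n-O(1)>13n-36>\xi_1(T_{n,3}^{0,n-4})^2$.
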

\begin{proof}
If $T$ is a tree on $n\geq 16$ vertices with diameter $d=8$, then, by Theorem \ref{min-even}, we have $\xi_1(T)\geq \xi_1(T_{n,8}^{\lfloor \frac{n-9}{2}\rfloor,0,\lceil \frac{n-9}{2}\rceil})$. Again, if $T$ is a tree on $n\geq 16$ vertices with diameter $d\geq 9$, then, by Lemma \ref{diam-red-odd}, Lemma \ref{diam-red-even} and Theorem \ref{min-odd}, it follows that $\xi_1(T)\geq \xi_1(T_{n,9}^{\lfloor \frac{n-10}{2}\rfloor,\lceil \frac{n-10}{2}\rceil})$. Therefore, to prove the required result it is sufficient to show that 
\begin{align*}
\xi_1(T_{n,8}^{\lfloor \frac{n-9}{2}\rfloor,0,\lceil \frac{n-9}{2}\rceil})>\xi_1(T_{n,3}^{0,n-4}) \qquad \text{and} \qquad \xi_1(T_{n,9}^{\lfloor \frac{n-10}{2}\rfloor,\lceil \frac{n-10}{2}\rceil})> \xi_1(T_{n,3}^{0,n-4}).    
\end{align*}

\textbf{Case 1.} Let $n\geq 16$ be odd. From Theorem \ref{spec even} and Theorem \ref{spec odd}, it follows that 
\begin{align*}
\xi_1(T_{n,8}^{ \frac{n-9}{2},0, \frac{n-9}{2}})=4+\sqrt{18n-4} \qquad \text{and}
\qquad \xi_1(T_{n,9}^{ \frac{n-11}{2}, \frac{n-9}{2}})=\sqrt{\frac{36n+69+\sqrt{4104n+5913}}{2}} .  
\end{align*}
Note that $18n-4>13n-36$ and $\frac{36n+69+\sqrt{4104n+5913}}{2}>13n-36$. Therefore, by Lemma \ref{bound-spec}, we have $\xi_1(T_{n,8}^{ \frac{n-9}{2},0,\frac{n-9}{2}})=4+\sqrt{18n-4} > \sqrt{13n-36}> \xi_1(T_{n,3}^{0,n-4})$ and $\xi_1(T_{n,9}^{ \frac{n-10}{2}, \frac{n-10}{2}})=\sqrt{\frac{36n+69+\sqrt{4104n+4617}}{2}}> \sqrt{13n-36}> \xi_1(T_{n,3}^{0,n-4})$. 

\textbf{Case 2.} Let $n\geq 16$ be even. From Theorem \ref{spec even}, it follows that $\xi_1(T_{n,8}^{\frac{n-10}{2},0, \frac{n-8}{2}})$ is the largest root of $f(x)=0$, where $$f(x)=x^4-(36n-8)x^2-256x+324n^2-1296n+716.$$
Therefore, 
\begin{align*}
f^{\prime}(x)=4(x^3-(18n-4)x-64), \qquad   f^{\prime \prime}(x)=4(3x^2-18n+4).  
\end{align*}
Thus, $f^{\prime}(x)$ is monotonically decreasing for $x\in (0,\sqrt{\frac{18n-4}{3}})$ and monotonically increasing for $x\in (\sqrt{\frac{18n-4}{3}},\infty)$. Note that $f^{\prime}(0)<0$, therefore $f^{\prime}(x)=0$ has a unique positive root, say $x_0$. Since $(23n+27)^2-(169n^2-974n+1417)=8(45n^2+277n-86)>0$, we have  
\begin{align*}
 f^{\prime}\big(\xi_1(T_{n,3}^{0,n-4})\big)=& 4\Big(\big(\xi_1(T_{n,3}^{0,n-4})\big)^3-(18n-4)\xi_1(T_{n,3}^{0,n-4})-64\Big)\\
 =& 4\xi_1(T_{n,3}^{0,n-4})\Big(\big(\xi_1(T_{n,3}^{0,n-4})\big)^2-(18n-4)\Big)-256\\
 =& 4\xi_1(T_{n,3}^{0,n-4})\bigg(\frac{13n-35+\sqrt{169n^2-974n+1417}}{2}-18n+4\bigg)-256\\
 =& -2\xi_1(T_{n,3}^{0,n-4})\bigg(23n+27-\sqrt{169n^2-974n+1417}\bigg)-256<0.
\end{align*}
This implies that $\xi_1(T_{n,3}^{0,n-4})<x_0<\xi_1(T_{n,8}^{\frac{n-10}{2},0, \frac{n-8}{2}})$.

 Again, by Theorem \ref{spec odd}, we have $\xi_1(T_{n,9}^{ \frac{n-10}{2}, \frac{n-10}{2}})=\sqrt{\frac{36n+69+\sqrt{4104n+4617}}{2}}> \sqrt{13n-36}> \xi_1(T_{n,3}^{0,n-4})$. 
\end{proof}

In the next theorem, we determine the tree with minimum $\mathcal{E}$-energy among all trees on $n$ vertices.

\begin{thm}
Let $T$ be a tree on $n$ vertices. 
\begin{enumerate}
    \item For $n=2,3,4$, the star $K_{1,n-1}$ is the unique tree with minimum $\mathcal{E}$-energy.
    \item For $n\geq 5$, the tree $T_{n,3}^{0,n-4}$ has the minimum $\mathcal{E}$-energy among all trees on $n$ vertices.
\end{enumerate}
\end{thm}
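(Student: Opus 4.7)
The plan is to split on $n$ and, for $n \geq 5$, on $\diam(T)$. For $n \in \{2, 3\}$ the star $K_{1,n-1}$ is the unique tree on $n$ vertices. For $n = 4$, Lemmas~\ref{star-spec} and~\ref{dstar-spec} give $E_{\mathcal{E}}(K_{1,3}) = 4 + 2\sqrt{7}$ and $E_{\mathcal{E}}(T_{4,3}^{0,0}) = 10$, so $K_{1,3}$ is the unique minimizer.

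For $n \geq 5$, the unifying idea is to exploit Theorem~\ref{second-larg}: for every tree $T \ncong K_{1,n-1}$, one has $\xi_2(T) \geq \xi_2(T_{n,3}^{0,n-4}) > 0$, with strict inequality whenever $T \ncong T_{n,3}^{0,n-4}$. This forces $T$ to have at least two positive $\mathcal{E}$-eigenvalues (by Theorems~\ref{Inertia-odd} and~\ref{inertia-even}), so $E_{\mathcal{E}}(T) \geq 2(\xi_1(T) + \xi_2(T))$. Since $E_{\mathcal{E}}(T_{n,3}^{0,n-4}) = 2(\xi_1(T_{n,3}^{0,n-4}) + \xi_2(T_{n,3}^{0,n-4}))$ by Theorem~\ref{Inertia-odd}, it suffices to establish $\xi_1(T) + \xi_2(T) > \xi_1(T_{n,3}^{0,n-4}) + \xi_2(T_{n,3}^{0,n-4})$ in each remaining case.

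The star $K_{1,n-1}$ is handled by the direct algebraic comparison: from Lemma~\ref{star-spec}, $E_{\mathcal{E}}(K_{1,n-1}) = 2(n-2) + 2\sqrt{n^2 - 3n + 3}$, and squaring shows this exceeds $2\sqrt{13n - 35 + 8\sqrt{n-3}}$ for $n \geq 5$. Diameter-$3$ trees $T \cong T_{n,3}^{a,b}$ with $a \geq 1$ follow from the closed form $E_{\mathcal{E}}(T_{n,3}^{a,b})^2 = 4(9ab + 13n - 35 + 8\sqrt{(a+1)(b+1)})$ derivable from Lemma~\ref{dstar-spec}, since both $9ab$ and $(a+1)(b+1) - (n-3) = ab$ are strictly positive. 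For $\diam(T) = 4$, Theorem~\ref{diam234-min} gives $\xi_1(T) > \xi_1(T_{n,3}^{0,n-4})$, and combined with the $\xi_2$ bound from Theorem~\ref{second-larg} yields the strict inequality on $\xi_1 + \xi_2$. For $\diam(T) \in \{5, 6, 7\}$ we invoke the explicit energy comparisons in Theorem~\ref{diam5-ener}, Lemma~\ref{diam6-ener1}, Theorem~\ref{diam6-ener2}, and Theorem~\ref{diam7-ener}. Finally, for $\diam(T) \geq 8$, Theorem~\ref{spec-diam-geq8} provides $\xi_1(T) > \xi_1(T_{n,3}^{0,n-4})$, and the argument concludes exactly as in the $\diam(T) = 4$ case. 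Uniqueness is automatic since every inequality used becomes strict as soon as $T \ncong T_{n,3}^{0,n-4}$.

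The main obstacle I anticipate lies inside the diameter $d \in \{5, 6, 7\}$ range. The specific energy lemmas available cover only the extremal forms $T_{n,d}^{a,b}$ (odd $d$) and $T_{n,d}^{a,b,c}$ (even $d$), and moreover exclude the boundary parameter $a = 0$ in Theorems~\ref{diam5-ener},~\ref{diam7-ener} and Lemma~\ref{diam6-ener1}. To treat an arbitrary diameter-$d$ tree in this range one must (i) extend those three statements to $a = 0$, which the proofs in fact allow with no change, and (ii) reduce a general diameter-$d$ tree to one of the extremal forms using Lemmas~\ref{odd-diam-min},~\ref{even-diam-min} together with the orderings in Theorems~\ref{ordering-odd},~\ref{ordering-even}. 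Step (ii) is the delicate technical point, since the known reductions preserve the $\xi_1$ inequality but not obviously the full $\xi_1 + \xi_2$ inequality; one expects to close the remaining gap by combining the generic lower bound $\xi_2(T) > \sqrt{2}$ obtained from the $4 \times 4$ submatrix $B$ in the proof of Theorem~\ref{second-larg} with the upper bound $\xi_2(T_{n,3}^{0,n-4}) < \sqrt{2}$ supplied by Lemma~\ref{bound-second}.
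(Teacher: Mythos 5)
Your skeleton is the paper's: reduce to comparing $\xi_1+\xi_2$ via the inertia theorems, dispose of the star by direct comparison, handle diameters $3$ and $4$ with the $\xi_1$ and $\xi_2$ minimality results, use the explicit energy computations for the caterpillars of diameter $5$--$7$, and finish diameter $\geq 8$ with Theorem~\ref{spec-diam-geq8} plus Theorem~\ref{second-larg}. Your closed form $E_{\mathcal{E}}(T_{n,3}^{a,b})^2=4(9ab+13n-35+8\sqrt{(a+1)(b+1)})$, the $n\leq 4$ computations, and the observation that the energy lemmas extend to $a=0$ are all correct. (The paper also short-circuits all of $5\leq n\leq 15$ at once via Theorem~\ref{min-gen}(1); your organization covers those $n$ inside the diameter cases, which is fine.)

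The genuine gap is exactly where you flag it, and your proposed repair of step (ii) does not close it. For a tree $T$ of diameter $d\in\{5,6,7\}$ with $n\geq 16$ that is \emph{not} one of the caterpillars covered by Theorems~\ref{diam5-ener}, \ref{diam6-ener2}, \ref{diam7-ener} and Lemma~\ref{diam6-ener1}, the bounds $\xi_2(T)>\sqrt{2}>\xi_2(T_{n,3}^{0,n-4})$ reduce the problem to showing $\xi_1(T)\geq\xi_1(T_{n,3}^{0,n-4})$; they cannot compensate for a deficit in $\xi_1$, since they only say the $\xi_2$ difference is positive, not how large it is. But the reductions you cite (Lemmas~\ref{odd-diam-min}, \ref{even-diam-min} and the orderings) only bound $\xi_1(T)$ below by the \emph{balanced} caterpillar, and by Theorem~\ref{min-gen}(2) that quantity is the global minimum of $\xi_1$ for $n\geq 16$ --- in particular strictly below $\xi_1(T_{n,3}^{0,n-4})$ --- so this chain of inequalities dead-ends. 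What the paper uses instead is the opposite end of the ordering: every diameter-$d$ tree outside the covered caterpillar families satisfies $\xi_1(T)\geq\xi_1(T_{n,5}^{0,n-6})$, $\xi_1(T_{n,6}^{0,n-7,0})$, or $\xi_1(T_{n,7}^{0,n-8})$ respectively (the \emph{most unbalanced} forms, which are the largest in the orderings), and these values, computed from Theorems~\ref{spec odd} and~\ref{spec even}, each exceed $\sqrt{13n-36}>\xi_1(T_{n,3}^{0,n-4})$ by Lemma~\ref{bound-spec}; Theorem~\ref{second-larg} then finishes. In other words, the missing ingredient is the fact that the only diameter-$d$ trees whose $\xi_1$ can dip below $\xi_1(T_{n,3}^{0,n-4})$ are precisely the caterpillars already handled by the exact energy computations; without that, your case $d\in\{5,6,7\}$ is incomplete.
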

\begin{proof}
For $n=2,3,4$, it is easy to verify that the star $K_{1,n-1}$ is the unique tree with minimum $\mathcal{E}$-energy.

Let $T$ be a tree on $n\geq 5$ vertices other than $T_{n,3}^{0,n-4}$. If $T$ is a tree with diameter $d=2$, then $T\cong K_{1,n-1}$. Now, by Lemma \ref{star-spec} and Lemma \ref{energy-bound}, we have $E_{\mathcal{E}}(K_{1,n-1})=2(n-2+\sqrt{n^2-3n+3})>2\sqrt{13n-35+8\sqrt{n-3}}=E_{\mathcal{E}}(T_{n,3}^{0,n-4})$. 

Let $T$ be a tree on $n\geq 5$ vertices with diameter $d\geq 3$ other than $T_{n,3}^{0,n-4}$. To show  $E_{\mathcal{E}}(T)>E_{\mathcal{E}}(T_{n,3}^{0,n-4})$, by Theorem \ref{Inertia-odd} and Theorem \ref{inertia-even}, it is sufficient to show that $$\xi_1(T)+\xi_2(T)>\xi_1(T_{n,3}^{0,n-4})+\xi_2(T_{n,3}^{0,n-4}).$$

If $T$ is a tree with $ n\leq 15$ vertices, then the proof follows from Theorem \ref{min-gen} and Theorem \ref{second-larg}. Again, if $T$ is a tree with diameter $d=3,4$, then the proof follows from Theorem \ref{diam234-min} and Theorem \ref{second-larg}. So, let us assume that $T$ be a tree on $n\geq 16$ vertices with diameter $d\geq 5$. Now, consider the following cases:

\textbf{Case 1.} Let $d=5$. If $T\cong T_{n,5}^{a,b}$ with $b\geq a\geq 1$ and $a+b=n-6$, then it follows by Theorem \ref{diam5-ener} that $E_{\mathcal{E}}(T)>E_{\mathcal{E}}(T_{n,3}^{0,n-4})$. If $T\ncong T_{n,5}^{a,b}$ with $b\geq a\geq 1$ and $a+b=n-6$, then by Lemma \ref{min-odd}, we have $\xi_1(T)\geq \xi_1(T_{n,5}^{0,n-6})$. Again, by Theorem \ref{spec odd} it follows that
\begin{align*}
\xi_1(T_{n,5}^{0,n-6})=\sqrt{ \frac{16n-21+\sqrt{256n^2-2272n+7541}}{2}}. 
\end{align*}
Since $(39n-313)n+1235>0$ for $n\geq 16$, therefore by Lemma \ref{bound-spec}, we have $\xi_1(T_{n,5}^{0,n-6})=\sqrt{ \frac{16n-21+\sqrt{256n^2-2272n+7541}}{2}}> \sqrt{13n-36}> \xi_1(T_{n,3}^{0,n-4})$. Now, the proof follows from Theorem \ref{second-larg}. 

\textbf{Case 2.} Let $d=6$. If $T\cong T_{n,6}^{a,b,a}$ with $a\geq 1,b\geq 0$ and $2a+b=n-7$, then it follows by Theorem \ref{diam6-ener1} that $E_{\mathcal{E}}(T)>E_{\mathcal{E}}(T_{n,3}^{0,n-4})$. Again, if $T\cong T_{n,6}^{a,b,a+1}$ with $a,b\geq 0$ and $2a+b=n-8$, then by Theorem \ref{diam6-ener2} it follows that $E_{\mathcal{E}}(T)>E_{\mathcal{E}}(T_{n,3}^{0,n-4})$. Let $T$ be a tree other than $T_{n,6}^{a,b,a}$ with $a\geq 1,b\geq 0;2a+b=n-7$ and $T_{n,6}^{a,b,a+1}$ with $a,b\geq 0;2a+b=n-8$. Then it follows by Lemma \ref{even-diam-min} and Theorem \ref{ordering-even} that $\xi_1(T)\geq \xi_1(T_{n,6}^{0,n-7,0})$. Again, from Theorem \ref{spec even} it follows that $\xi_1(T_{n,6}^{0,n-7,0})=3+\sqrt{32n-156}> \sqrt{13n-36}> \xi_1(T_{n,3}^{0,n-4})$. Now, the proof follows from Theorem \ref{second-larg}.

\textbf{Case 3.} Let $d=7$.  If $T\cong T_{n,7}^{a,b}$ with $b\geq a\geq 1$ and $a+b=n-8$, then it follows by Theorem \ref{diam7-ener} that $E_{\mathcal{E}}(T)>E_{\mathcal{E}}(T_{n,3}^{0,n-4})$. If $T\ncong T_{n,7}^{a,b}$ with $b\geq a\geq 1$ and $a+b=n-8$, then by Lemma \ref{min-odd}, we have $\xi_1(T)\geq \xi_1(T_{n,7}^{0,n-8})$. Again, by Theorem \ref{spec odd} it follows that
\begin{align*}
\xi_1(T_{n,7}^{0,n-8})=\sqrt{\frac{25n+3+\sqrt{625n^2-7550n+37884}}{2}}.
\end{align*}
Since $(624n-7400)n+32259>0$ for $n\geq 16$, therefore by Lemma \ref{bound-spec}, we have $\xi_1(T_{n,7}^{0,n-8})=\sqrt{\frac{25n+3+\sqrt{625n^2-7550n+37884}}{2}}> \sqrt{13n-36}> \xi_1(T_{n,3}^{0,n-4})$. Now, the proof follows from Theorem \ref{second-larg}.

\textbf{Case 4.} Let $T$ be a tree on $n\geq 16$ vertices with diameter $d\geq 8$. From Theorem \ref{spec-diam-geq8} and Theorem \ref{second-larg}, it follows that $\xi_1(T)+\xi_2(T)>\xi_1(T_{n,3}^{0,n-4})+\xi_2(T_{n,3}^{0,n-4})$. 
\end{proof}

\bibliographystyle{plain}
\bibliography{ecc-ref}

\begin{thebibliography}{10}

\bibitem{andjelic2022extended}
Milica Andeli{\'c}, Carlos M~da Fonseca, Tamara Koledin, and Zoran Stani{\'c}.
\newblock An extended eigenvalue-free interval for the eccentricity matrix of
  threshold graphs.
\newblock {\em Journal of Applied Mathematics and Computing}, pages 1--13,
  2022.

\bibitem{brou-haem-book}
Andries~E. Brouwer and Willem~H. Haemers.
\newblock {\em Spectra of graphs}.
\newblock Universitext. Springer, New York, 2012.

\bibitem{he2022largest}
Xiaocong He and Lu~Lu.
\newblock On the largest and least eigenvalues of eccentricity matrix of trees.
\newblock {\em Discrete Math.}, 345(1):Paper No. 112662, 11, 2022.

\bibitem{hor-john-mat}
Roger~A. Horn and Charles~R. Johnson.
\newblock {\em Matrix analysis}.
\newblock Cambridge University Press, Cambridge, second edition, 2013.

\bibitem{lei2022spectral}
Xingyu Lei and Jianfeng Wang.
\newblock Spectral determination of graphs with one positive anti-adjacency
  eigenvalue.
\newblock {\em Appl. Math. Comput.}, 422:Paper No. 126995, 13, 2022.

\bibitem{lei2021eigenvalues}
Xingyu Lei, Jianfeng Wang, and Guozheng Li.
\newblock On the eigenvalues of eccentricity matrix of graphs.
\newblock {\em Discrete Appl. Math.}, 295:134--147, 2021.

\bibitem{mahato2020spectra}
Iswar Mahato, R.~Gurusamy, M.~Rajesh Kannan, and S.~Arockiaraj.
\newblock Spectra of eccentricity matrices of graphs.
\newblock {\em Discrete Appl. Math.}, 285:252--260, 2020.

\bibitem{mahato2021spectral}
Iswar Mahato, R~Gurusamy, M~Rajesh~Kannan, and S~Arockiaraj.
\newblock On the spectral radius and the energy of eccentricity matrices of
  graphs.
\newblock {\em Linear and Multilinear Algebra}, pages 1--11, 2021.

\bibitem{mahato2022eccentricity}
Iswar Mahato and M.~Rajesh Kannan.
\newblock Eccentricity energy change of complete multipartite graphs due to
  edge deletion.
\newblock {\em Spec. Matrices}, 10:193--202, 2022.

\bibitem{mahato2022inertia}
Iswar Mahato and M.~Rajesh~Kannan.
\newblock On the eccentricity matrices of trees: {I}nertia and spectral
  symmetry.
\newblock {\em Discrete Math.}, 345(11):Paper No. 113067, 2022.

\bibitem{patel2021energy}
Ajay~Kumar Patel, Lavanya Selvaganesh, and Sanjay~Kumar Pandey.
\newblock Energy and inertia of the eccentricity matrix of coalescence of
  graphs.
\newblock {\em Discrete Math.}, 344(12):Paper No. 112591, 11, 2021.

\bibitem{qiu2022eccentricity}
Zhengping Qiu and Zikai Tang.
\newblock On the eccentricity spectra of threshold graphs.
\newblock {\em Discrete Appl. Math.}, 310:75--85, 2022.

\bibitem{ran1}
Milan Randi\'{c}.
\newblock {${\rm D}_{\rm MAX}$}---matrix of dominant distances in a graph.
\newblock {\em MATCH Commun. Math. Comput. Chem.}, 70(1):221--238, 2013.

\bibitem{ran2}
Milan Randi\'{c}, Rok Orel, and Alexandru~T. Balaban.
\newblock {${\rm D}_{\rm MAX}$} matrix invariants as graph descriptors.
  {G}raphs having the same {B}alaban index {$J$}.
\newblock {\em MATCH Commun. Math. Comput. Chem.}, 70(1):239--258, 2013.

\bibitem{wang2020boiling}
Jianfeng Wang, Xingyu Lei, Shuchao Li, Wei Wei, and Xiaobing Luo.
\newblock On the eccentricity matrix of graphs and its applications to the
  boiling point of hydrocarbons.
\newblock {\em Chemometrics and Intelligent Laboratory Systems}, page 104173,
  2020.

\bibitem{wang2019graph}
Jianfeng Wang, Lu~Lu, Milan Randi\'{c}, and Guozheng Li.
\newblock Graph energy based on the eccentricity matrix.
\newblock {\em Discrete Math.}, 342(9):2636--2646, 2019.

\bibitem{ecc-main}
Jianfeng Wang, Mei Lu, Francesco Belardo, and Milan Randi\'{c}.
\newblock The anti-adjacency matrix of a graph: {E}ccentricity matrix.
\newblock {\em Discrete Appl. Math.}, 251:299--309, 2018.

\bibitem{wang2022spectraldetermination}
Jianfeng Wang, Mei Lu, Maurizio Brunetti, Lu~Lu, and Xueyi Huang.
\newblock Spectral determinations and eccentricity matrix of graphs.
\newblock {\em Adv. in Appl. Math.}, 139:Paper No. 102358, 25, 2022.

\bibitem{wang2020spectral}
Jianfeng Wang, Mei Lu, Lu~Lu, and Francesco Belardo.
\newblock Spectral properties of the eccentricity matrix of graphs.
\newblock {\em Discrete Appl. Math.}, 279:168--177, 2020.

\bibitem{wei2020solutions}
Wei Wei, Xiaocong He, and Shuchao Li.
\newblock Solutions for two conjectures on the eigenvalues of the eccentricity
  matrix, and beyond.
\newblock {\em Discrete Math.}, 343(8):111925, 21, 2020.

\bibitem{wei2022eccentricity}
Wei Wei and Shuchao Li.
\newblock On the eccentricity spectra of complete multipartite graphs.
\newblock {\em Appl. Math. Comput.}, 424:Paper No. 127036, 12, 2022.

\bibitem{wei2022characterizing}
Wei Wei, Shuchao Li, and Licheng Zhang.
\newblock Characterizing the extremal graphs with respect to the eccentricity
  spectral radius, and beyond.
\newblock {\em Discrete Math.}, 345(2):Paper No. 112686, 29, 2022.

\end{thebibliography}
\end{document}